\documentclass[12pt, twoside]{article}
\usepackage{amsmath,amsthm,amssymb}
\usepackage{times}
\usepackage{enumerate}
\usepackage{bm}
\usepackage[all]{xy}
\usepackage{mathrsfs}
\usepackage{amscd}
\usepackage{color}
\usepackage{comment}
\usepackage{slashed}

\def\titlerunning#1{\gdef\titrun{#1}}
\makeatletter
\def\author#1{\gdef\autrun{\def\and{\unskip, }#1}\gdef\@author{#1}}
\def\address#1{{\def\and{\\\hspace*{18pt}}\renewcommand{\thefootnote}{}%
		\footnote {#1}}%
	\markboth{\autrun}{\titrun}}
\makeatother
\def\email#1{e-mail: #1}
\def\subjclass#1{{\renewcommand{\thefootnote}{}%
		\footnote{\emph{Mathematics Subject Classification (2020):} #1}}}
\def\keywords#1{\par\medskip
	\noindent\textbf{Keywords.} #1}

\newtheorem{theorem}{Theorem}[section]
\newtheorem{corollary}[theorem]{Corollary}
\newtheorem{lemma}[theorem]{Lemma}
\newtheorem{proposition}[theorem]{Proposition}
\theoremstyle{definition}

\newtheorem{remark}[theorem]{Remark}

\numberwithin{equation}{section}

\xyoption{all}

\def \a {\alpha }
\def \b {\beta}

\def \de {\delta}

\def \la {\lambda}

\def\na {\nabla}

\begin{document}
	\baselineskip=17pt
	
	\titlerunning{Bochner–Yano Type Theorems for Conformal  Killing Vector Fields }
	\title{ Bochner–Yano Type Theorems for Conformal Killing Vector  Fields under Curvature Pinching Conditions} 
	
	\author{Teng Huang, Qiang Tan and Weiwei Wang}
	
	\date{}
	
	\maketitle
	
\address{Teng Huang: School of Mathematical Sciences, CAS Key Laboratory of Wu Wen-Tsun Mathematics, University of Science
and Technology of China, Hefei, Anhui, 230026, People’s Republic of China; \email{htmath@ustc.edu.cn; htustc@gmail.com}}
\address{Qiang Tan: School of Mathematical Sciences, Jiangsu University, Zhenjiang, Jiangsu, 212013, People’s Republic of China; \email{tanqiang@ujs.edu.cn}}
\address{Weiwei Wang: School of Mathematical Sciences, University of Electronic Science and Technology of China, Chengdu, Sichuan, 611731, People’s Republic of China; \email{wawnwg123@163.com; wangweiwei123@uestc.edu.cn}}
	\subjclass{53C20;53C21;53C24;58A10}

\begin{abstract}
In this article, we investigate conformal Killing vector fields on closed Riemannian manifolds under a curvature pinching condition. By establishing a new Bochner-type identity for the $1$‑form dual to a conformal Killing vector field, we derive a sharp gradient estimate via a Moser iteration procedure. Based on this estimate, we prove that, under a suitable upper bound on the Ricci curvature, every nontrivial conformal Killing vector field must be nowhere vanishing. Consequently, on even-dimensional manifolds with non-zero Euler characteristic, every conformal Killing vector field vanishes identically, which in turn implies that the conformal transformation group of such a manifold is finite. Our results extend the classical rigidity theorems of Yano and Bochner from the setting of non-positive Ricci curvature to that of small positive Ricci curvature, and generalize the recent results of Chen and Han from Killing vector fields to conformal Killing vector fields.
\end{abstract}
\keywords{Conformal Killing vector field, Bochner technique, Moser iteration, Gradient estimate, Ricci curvature.}

\section{Introduction}
Let $(M,g)$ be a connected, closed $n$-dimensional smooth Riemannian manifold. A smooth vector field $X$ is called a \textbf{conformal Killing vector field} if the Lie derivative of the metric $g$ with respect to $X$ satisfies
$$
\mathcal{L}_Xg=h g
$$
for some smooth function $h \in C^\infty(M)$. Taking the trace on  both sides, it follows that 
$$h=\frac{2}{n}\mathrm{div}\, X.$$ In the special case $h=0$, $X$ is a \textbf{Killing vector field}. The conformal Killing vector field and the induced conformal Killing operator, which were already studied in the context of Stein–Weiss operators \cite{Brans, SW}, play a fundamental role in the analysis of the Einstein–Lichnerowicz constraint system in general relativity (see \cite{Premos, Gourg}).

 A classical theorem of Yano \cite{Yano1} states that if the Ricci curvature is non-positive ($\mathrm{Ric} \leq 0$), then every conformal Killing vector field  is parallel.  Furthermore, when $\mathrm{Ric}<0$, every conformal Killing vector field on $M$ is identically zero.  Yano's result extends the earlier work of Bochner \cite{Boc}, who proved the corresponding rigidity for Killing vector fields under the same curvature assumptions. Thus, Yano's theorem can be viewed as a natural generalization of Bochner's theorem to the conformal setting.

The main purpose of this article is to generalize these classical rigidity results by allowing the Ricci curvature to be bounded above by a small, explicitly determined positive constant. More precisely, we prove the following theorem.
\begin{theorem}\label{T2}
	Let $(M,g)$ be a connected, closed smooth Riemannian manifold of dimension $n\geq2$. Let $\kappa, K,D,p$ be positive constants with $p>n/2$. Define $\lambda:=\lambda(n,p,\kappa,K,D)$ by
	$$\sqrt{\lambda D^2}= \min\left\{ 
		 \left(\frac{\tilde{C}(n,p)}{1 + \sqrt{K}D }e^{-(n-1)\sqrt{\kappa D^2}} \right)^{\frac{2pn}{2p-n}},  
\; e^{-(n-1)\sqrt{\kappa D^2}}
		\right\},$$
where $\tilde{C}(n,p)$ is a constant depending only on $n$ and $p$.

		Suppose that 
	\[
	-(n-1)\kappa\leq \operatorname{Ric}\leq \lambda , \quad
	\|R\|_{2p} \leq K,\quad 
	\operatorname{diam}(M) \leq D.
	\]
Then every conformal Killing vector field  on $M$ is either identically zero or nowhere vanishing.
\end{theorem}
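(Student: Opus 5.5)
The plan follows the strategy of Chen and Han for Killing fields, adapted to the conformal setting. Let $X$ be a conformal Killing vector field, $\xi=X^\flat$ its dual $1$-form, and $u=|\xi|$. Suppose $X\not\equiv 0$ but $X(x_0)=0$ at some point; we aim for a contradiction once $\lambda$ is as small as in the statement.

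\textbf{Step 1: Bochner identity.} Since $\mathcal{L}_Xg=hg$ with $h=\frac{2}{n}\operatorname{div}X$, the tensor $\nabla\xi$ splits as $\frac{1}{2}d\xi+\frac{h}{2}g$. Combining $\Delta\xi=-\operatorname{Ric}(\xi)+\frac{2-n}{2}dh$-type identities with $\Delta h=-\frac{1}{n-1}\operatorname{div}(\operatorname{Ric}(X))$, we will derive
\[
\frac12\Delta|\xi|^2=|\nabla\xi|^2-\operatorname{Ric}(\xi,\xi)+(\text{terms in }\langle \xi,dh\rangle),
\]
and rearrange it so that the $dh$ term becomes a divergence plus nonnegative pieces. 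Integrating against test functions, the divergence disappears after integration by parts. A refined Kato inequality $|\nabla\xi|^2\ge c_n|\nabla|\xi||^2$ should then yield a differential inequality of the form $u\Delta u\ge -\lambda u^2+(\text{good gradient term})$ in the weak sense. This is the conformal analogue of $\Delta u\ge-\lambda u$.

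\textbf{Step 2: Moser iteration and gradient estimate.} We will run Moser iteration on this subsolution inequality. The Sobolev constant comes from $\operatorname{Ric}\ge-(n-1)\kappa$ and $\operatorname{diam}\le D$ (Gallot/Petersen–Wei type), which accounts for the factor $e^{-(n-1)\sqrt{\kappa D^2}}$. The $L^{2p}$ bound on curvature, used for $\|R\|_{2p}\le K$, controls $\nabla\xi$ through the identity $\nabla^2\xi=R*\xi+\nabla h\otimes g$. The outcome we aim for is a sharp estimate
\[
\sup_M|\nabla u|\le C(n,p,\kappa,K,D)\,\lambda^{\alpha}\sup_M u,\qquad \alpha>0,
\]
with the exponent tied to $\frac{2p-n}{2pn}$. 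The role of $p>n/2$ is that $L^p$ is supercritical, so the iteration closes. I expect this to be the main obstacle. The conformal term $dh$ does not appear in the Killing case, so we must show that $|\nabla h|$ is controlled by $\lambda$ and $\|R\|_{2p}$ times $u$. To do this I would use $\nabla dh=-\frac{1}{n-1}\nabla(\operatorname{Ric}(\xi))$ together with Ricci pinching, which forces the trace-free part to be small.

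\textbf{Step 3: Conclusion.} Normalize $\sup u=u(x_1)=1$. Along a minimal geodesic from $x_1$ to $x_0$, of length at most $D$, integrating the gradient bound gives
\[
1=u(x_1)-u(x_0)\le D\sup|\nabla u|\le C D\lambda^{\alpha}.
\]
The definition of $\sqrt{\lambda D^2}$ makes $CD\lambda^\alpha<1$, which is a contradiction. Hence $u>0$ everywhere, that is, $X$ is nowhere vanishing. The choice of $\tilde C(n,p)$ will be read off from the constant in Step 2, and the second entry $e^{-(n-1)\sqrt{\kappa D^2}}$ of the minimum guarantees $\lambda D^2\le1$, which is used in the iteration.
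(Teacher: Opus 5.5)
There is a genuine gap in Step 2, which is where the substance of the theorem lies. Moser iteration applied to the subsolution inequality of Step 1 for $u=|\xi|$ yields only an $L^\infty$--$L^2$ bound $\sup u\le e^{C(n)\sqrt{\lambda}C_s}\|u\|_2$. This is exactly the paper's Lemma~\ref{L4}, and your Step 1 is essentially its proof. That bound carries no information about $\nabla u$, and no iteration on $u$ alone can produce a factor $\lambda^{\alpha}$ in front of $\sup|\nabla u|$. Your alternative suggestion, bounding $\nabla^2\xi$ via $\nabla^2\xi=R*\xi+(dh\text{-terms})$, only moves the problem to $dh$ (see below).

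What is needed is to go one derivative up. The paper derives a Bochner identity for $\frac12\Delta_d|\nabla\theta|^2+|\nabla^2\theta|^2$ (Lemma~\ref{K2}), turns it into the weighted integral inequality of Lemma~\ref{L7}, and runs a second, inhomogeneous Moser iteration for $|\nabla\theta|$. The inhomogeneous terms $|R\theta|^2+|\mathrm{Ric}(\theta)|^2$ are handled by H\"older with $\|R\|_{2p}$ and $\|\theta\|_\infty$; this is where your Step 1 bound is actually used.

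The smallness in $\lambda$ comes from an input your plan never isolates. Integrating the Bochner formula gives $\|\nabla\theta\|_2^2\le\lambda\|\theta\|_2^2$ (Lemma~\ref{K3}, using the sign of $-\frac{n-2}{n}\int(\delta\theta)^2$). This $L^2$ seed enters through $\|u\|_{2p/(p-1)}\le\|u\|_2^{1-1/p}\|u\|_\infty^{1/p}$ at the start of the iteration, and it is what produces the powers of $\sqrt{\lambda D^2}$ in Proposition~\ref{P6}.

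Your plan for the conformal term would also fail under the stated hypotheses. No local identity gives $|\nabla h|\lesssim u$ pointwise, because $dh$ is a second-order quantity not determined by $\xi$ at a point. On the round sphere, take $\nabla f$ with $f$ a first eigenfunction and add a Killing field equal to $-\nabla f(x)$ at $x$. This gives $\xi(x)=0$ but $dh(x)=-2df(x)\neq0$.

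The formula $\nabla dh=-\frac{1}{n-1}\nabla(\mathrm{Ric}(\xi))$ is not a valid Hessian identity. Only a scalar identity holds, $\frac{n-1}{n}\Delta_d(\delta\theta)=\delta[\mathrm{Ric}(\theta)]$, obtained by applying $\delta$ to \eqref{mainKillForm2}. Any correct expression for $\nabla^2h$ involves $\nabla\mathrm{Ric}$, as does the right-hand side of your formula, and the hypotheses give no control on $\nabla\mathrm{Ric}$. Nor is there Ricci pinching to exploit: $-(n-1)\kappa\le\mathrm{Ric}\le\lambda$ with $\kappa$ arbitrary does not make the trace-free Ricci tensor small.

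The paper never estimates $dh$ pointwise. In the weighted integral it moves the derivative in $\delta\theta\cdot\delta[\mathrm{Ric}(\theta)]\,u^{2k-2}$ onto $\delta\theta\,u^{2k-2}$. It then absorbs $|d\delta\theta|\le\sqrt{n}\,|\nabla^2\theta|$ into the good term $|\nabla^2\theta|^2u^{2k-2}$. The terms $\langle\nabla\mathrm{Ric}(\theta),\nabla\theta\rangle$ and $\langle\nabla^*[R(\cdot,\cdot)\theta],\nabla\theta\rangle$ are likewise removed by divergence identities, so only $\|R\|_{2p}$ enters and no curvature derivative is needed.

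Your Step 3 is fine and matches the paper's Harnack argument (Theorem~\ref{T4}).
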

Since Yano's work \cite{Yano1}, rigidity theorems of this type for conformal Killing vector fields have been extensively studied under different curvature conditions. Tanno and Weber \cite{TW} established that a connected compact Riemannian manifold with positive constant scalar curvature is globally isometric to a sphere if the Euler characteristic $\chi(M)\neq0$ and admits a closed conformal Killing vector field. Ejiri \cite{Ejiri} proved that for any $n>2$, there exists a closed Riemannian $n$-manifold of constant scalar curvature admitting a nowhere vanishing conformal Killing vector field. Gursky \cite{Gursky} showed that a $4$-manifold with negative Yamabe invariant admits no nontrivial conformal Killing vector field, provided an integral condition on the Weyl curvature is satisfied. We refer the reader to  \cite{Deshmukh1,SD,Yano2}  and the references therein for a more comprehensive overview of related results.

In odd dimensions, we always have  the Euler characteristic $\chi(M)=0$, so nowhere vanishing conformal Killing  vector fields are topologically possible. However, in even dimensions,  $\chi(M)\neq 0$ is permitted.  By the Poincar\'e--Hopf theorem, if  $\chi(M)\neq0$, then every vector field on $M$ must have at least one zero. Consequently, Theorem \ref{T2} immediately yields the following result:
\begin{theorem}\label{T3}
Let $(M,g)$ be a connected, closed $2n$-dimensional smooth Riemannian manifold with nonvanishing Euler characteristic. Under the conditions of Theorem \ref{T2},  every conformal Killing vector field on $M$ is identically zero.
\end{theorem}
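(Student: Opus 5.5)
The plan is to deduce Theorem \ref{T3} directly from Theorem \ref{T2} together with the Poincar\'e--Hopf theorem. Let $(M,g)$ be a connected, closed manifold of even dimension $2n$ satisfying the hypotheses of Theorem \ref{T2}, with the dimension there equal to $2n\geq 2$. The constant $\lambda$ is then the one defined in Theorem \ref{T2} with $n$ replaced by $2n$, and $p>n$. Let $X$ be any conformal Killing vector field on $M$. By Theorem \ref{T2}, exactly one of two alternatives holds: either $X\equiv 0$, or $X$ has no zeros on $M$.

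To rule out the second alternative, suppose $X$ vanishes nowhere. $X$ is a smooth vector field on a closed manifold, and its zero set is empty, so the sum of its indices over its zeros is the empty sum, namely $0$. The Poincar\'e--Hopf theorem says this index sum equals $\chi(M)$, which would force $\chi(M)=0$. This contradicts the assumption $\chi(M)\neq 0$. Equivalently, the standard corollary that every vector field on a closed manifold with nonzero Euler characteristic has a zero applies here.

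The only alternative left is $X\equiv 0$. Since $X$ was arbitrary, every conformal Killing vector field on $M$ is identically zero.

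There is no genuine obstacle in this argument; all the analytic work is contained in Theorem \ref{T2}. The one point needing care is bookkeeping: the dimensional parameter in Theorem \ref{T2} must be read as $2n$, so that $\lambda=\lambda(2n,p,\kappa,K,D)$ and the hypotheses $p>n$ and $-(2n-1)\kappa\leq\operatorname{Ric}$ are interpreted consistently. Compactness and smoothness are what allow Poincar\'e--Hopf to be applied.
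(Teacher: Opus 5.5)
Your proposal is correct and matches the paper's proof: Theorem \ref{T2} gives the dichotomy (identically zero or nowhere vanishing), and the Poincar\'e--Hopf theorem rules out a nowhere vanishing field when $\chi(M)\neq 0$. You also note that the dimension in Theorem \ref{T2} must be read as $2n$, so that $p>n$ and $\operatorname{Ric}\geq -(2n-1)\kappa$; the paper leaves this implicit.
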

For a closed Riemannian manifold $(M,g)$ with dimension $n \geq 2$, let $\operatorname{Con}(M,g)$ and $\operatorname{Iso}(M,g)$ denote the group of conformal transformations and the group of isometries, respectively. Both are finite dimensional Lie groups, and $\operatorname{Iso}(M,g)$ is a closed subgroup of  $\operatorname{Con}(M,g)$. The Lie algebra of $\operatorname{Con}(M,g)$ is spanned by conformal Killing vector fields, while the Lie algebra of $\operatorname{Iso}(M,g)$ is spanned by Killing vector fields. The relationship  between $\operatorname{Con}(M,g)$ and $\operatorname{Iso}(M,g)$ has been extensively studied in \cite{GK1,GK2,Obata}.  In the language of conformal transformation groups, Theorem \ref{T3} can be restated as follows.
\begin{corollary}\label{C1}
Let $(M,g)$ be a connected, closed $2n$-dimensional smooth Riemannian manifold with nonvanishing Euler characteristic. Under the conditions of Theorem \ref{T2}, the conformal transformation group $\operatorname{Con}(M,g)$ is finite.
\end{corollary}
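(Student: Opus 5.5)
The plan is to show that the Lie algebra $\mathfrak{con}(M,g)$ of $\operatorname{Con}(M,g)$ is trivial, and then to use compactness of the group to get finiteness. By Theorem \ref{T3}, every conformal Killing vector field on $M$ vanishes identically. Since the Lie algebra of $\operatorname{Con}(M,g)$ is exactly the space of conformal Killing vector fields, it follows that $\mathfrak{con}(M,g)=0$. Hence $\operatorname{Con}(M,g)$ is a zero-dimensional Lie group, so it is discrete and its identity component is trivial.

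It remains to show that $\operatorname{Con}(M,g)$ is compact, since a compact discrete group is finite. I would invoke the Lelong-Ferrand--Obata theorem: for a closed Riemannian manifold, $\operatorname{Con}(M,g)$ is non-compact (more precisely, is not inessential) only if $(M,g)$ is conformally diffeomorphic to the round sphere $S^{2n}$. This is the only real obstacle.

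The sphere case is excluded directly. The round sphere carries nontrivial conformal Killing vector fields, namely the gradients of first eigenfunctions and the rotations. A conformal diffeomorphism $\phi:(M,g)\to (S^{2n},g_{\mathrm{round}})$ would pull these back to nonzero conformal Killing vector fields on $(M,g)$. This holds because the conformal Killing equation is conformally invariant: if $\mathcal{L}_Xg=hg$ and $\tilde g=e^{2u}g$, then $\mathcal{L}_X\tilde g=(h+2X(u))\tilde g$. The pulled-back fields would contradict Theorem \ref{T3}.

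Therefore $(M,g)$ is not conformal to the sphere, and by the Lelong-Ferrand--Obata theorem $\operatorname{Con}(M,g)$ is compact. A compact Lie group of dimension zero is finite, which proves the corollary.

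In the same case one also gets $\operatorname{Con}(M,g)=\operatorname{Iso}(M,\tilde g)$ for some conformal metric $\tilde g$, which is consistent with this conclusion. Alternatively, compactness of $\operatorname{Iso}(M,g)$ (Myers--Steenrod) handles the isometry part directly.
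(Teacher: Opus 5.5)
Your proposal is correct and follows essentially the same route as the paper: Theorem \ref{T3} forces the Lie algebra to vanish, so $\operatorname{Con}(M,g)$ is discrete, and the Lelong-Ferrand--Obata theorem makes it compact once the round sphere is excluded because it carries nontrivial conformal Killing fields. Your extra step, pulling those fields back via the conformal invariance of the conformal Killing equation, makes explicit what the paper only asserts when it says the sphere case ``cannot occur.''
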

Results of this type for the isometry group $\operatorname{Iso}(M,g)$ are obtained in \cite{CH}. Corollary \ref{C1} extends them to the conformal transformation group  $\operatorname{Con}(M,g)$.  
 
To analyze the group action of $\operatorname{Con}(M,g)$, we recall that 
a smooth action of a Lie group $G$ on a manifold $M$ is said to be locally free if the stabilizer subgroup 
$$G_p = \{x \in G \mid x \cdot p = p\}$$ 
is discrete for every point $p \in M$. Since $M$ is closed, $\text{Con}(M,g)$ is a finite dimensional Lie group acting smoothly on $M$. Examining its stabilizer subgroups  $\operatorname{Con}_p$, we obtain the following consequence.
\begin{corollary}\label{C3}
Let $(M,g)$ be a connected, closed smooth Riemannian manifold of dimension $n\geq2$. Under the conditions of Theorem \ref{T2}, the action of conformal transformation group $\operatorname{Con}(M,g)$ on $M$ is locally free.
\end{corollary}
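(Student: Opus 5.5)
The plan is to show that for every point $p\in M$ the stabilizer $\operatorname{Con}_p$ is a closed subgroup of the Lie group $\operatorname{Con}(M,g)$ whose Lie algebra is trivial; a closed subgroup with zero-dimensional Lie algebra is discrete. First, $\operatorname{Con}_p$ is closed, since it is the preimage of the point $p$ under the continuous orbit map $\operatorname{Con}(M,g)\to M$, $\varphi\mapsto\varphi(p)$. By Cartan's closed subgroup theorem it is therefore an embedded Lie subgroup. Its Lie algebra $\mathfrak{con}_p$ consists of those elements $X$ of the Lie algebra of $\operatorname{Con}(M,g)$ with $\exp(tX)\in\operatorname{Con}_p$ for all $t\in\mathbb{R}$.

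Next I identify $\mathfrak{con}_p$ concretely. The Lie algebra of $\operatorname{Con}(M,g)$ is the space of conformal Killing vector fields on $M$, and $\exp(tX)$ is the time-$t$ flow $\phi_t^X$. The condition $\phi^X_t(p)=p$ for all $t$ is equivalent to $X(p)=0$:
\begin{itemize}
\item differentiating $\phi^X_t(p)=p$ at $t=0$ gives $X(p)=0$;
\item conversely, if $X(p)=0$ then the constant curve at $p$ is the integral curve through $p$, so $\phi^X_t(p)=p$ for all $t$.
\end{itemize}
Hence $\mathfrak{con}_p=\{X \text{ conformal Killing} : X(p)=0\}$.

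Now Theorem \ref{T2} applies under the stated hypotheses: every conformal Killing vector field is either identically zero or nowhere vanishing. A field vanishing at $p$ is therefore identically zero, so $\mathfrak{con}_p=0$. Thus $\operatorname{Con}_p$ is a zero-dimensional Lie group, that is, discrete. Since $p$ was arbitrary, the action is locally free.

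The only step needing care is the identification of the Lie algebra of $\operatorname{Con}(M,g)$ with conformal Killing fields, with the exponential map given by the flow. This uses completeness of the fields on closed $M$ and the standard fact that one-parameter subgroups of a Lie transformation group are generated by its infinitesimal generators. I take this as standard, as the paper itself does.
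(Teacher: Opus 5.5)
Your proposal is correct and follows the paper's argument. The stabilizer $\operatorname{Con}_p$ is a closed Lie subgroup whose Lie algebra consists of conformal Killing fields vanishing at $p$; these are identically zero by Theorem~\ref{T2}, so $\operatorname{Con}_p$ is discrete. The paper additionally cites compactness of $\operatorname{Con}(M,g)$ from Corollary~\ref{C1} to conclude the stabilizers are finite, but local freeness does not require this, and Corollary~\ref{C1}'s hypotheses (even dimension, $\chi(M)\neq 0$) are not assumed in this statement, so stopping at discreteness as you do is the cleaner conclusion.
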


In the special case $h=0$, the vector field $X$ reduces to a Killing vector field. We now revisit this case.

Bott proved in \cite{Bot} that the existence of a non vanishing Killing vector field implies that all Pontryagin numbers of $M$ vanish.  For further rigidity theorems for Killing vector fields under different curvature conditions, see \cite{Fuh, KN,LR} and the references therein. Combining Bott's result with Theorem \ref{T2} yields the following corollary.
	\begin{corollary}\label{C2}
	Let $(M,g)$ be a connected, closed $n$-dimensional smooth Riemannian manifold. Suppose that $M$ satisfies one of the following topological  conditions:
	\begin{itemize}\vspace{-0.4\baselineskip}
	\item[(1)] $n \equiv 0 \pmod 2$, and the Euler characteristic $\chi(M) \neq 0$;\vspace{-0.6\baselineskip}
	\item[(2)] $n \equiv 0 \pmod 4$, and at least one Pontryagin number is non-zero.
	\end{itemize}\vspace{-0.4\baselineskip}
		Then, under the conditions of Theorem \ref{T2}, the isometry group of the Riemannian metric $g$ on $M$ is finite.
	\end{corollary}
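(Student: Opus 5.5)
The plan is to reduce Corollary \ref{C2} to the Killing case of Theorem \ref{T2}. A Killing vector field is a conformal Killing vector field with $h=0$, so under the hypotheses of Theorem \ref{T2} every Killing vector field on $M$ is either identically zero or nowhere vanishing. The first step is to rule out the nowhere vanishing alternative in each of the two topological cases. This will show that $(M,g)$ carries no nontrivial Killing vector field.

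\textbf{Case (1).} Here $n$ is even and $\chi(M)\neq 0$. By the Poincar\'e--Hopf theorem every smooth vector field on $M$ has a zero, so a nowhere vanishing Killing field cannot exist. Theorem \ref{T3} already gives this; in fact it gives that all conformal Killing fields vanish.

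\textbf{Case (2).} Here $n\equiv 0 \pmod 4$ and some Pontryagin number is non-zero. Suppose a nontrivial Killing field $X$ existed. By Theorem \ref{T2} it is nowhere vanishing, and then Bott's theorem \cite{Bot} says all Pontryagin numbers of $M$ vanish. This contradicts the hypothesis, so $X\equiv 0$.

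The second step passes from the Lie algebra to the group. Since $M$ is closed, $\operatorname{Iso}(M,g)$ is a compact Lie group (Myers--Steenrod). Its Lie algebra is the space of Killing vector fields, which we have just shown is zero. So $\operatorname{Iso}(M,g)$ has dimension zero and is therefore discrete. A compact discrete group is finite, which finishes the proof.

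I expect no serious obstacle. The only point needing care is the use of Bott's result in case (2). It requires a nowhere vanishing Killing field, equivalently a fixed-point-free circle or torus action in the closure of the flow. The nontrivial input that supplies this nonvanishing is Theorem \ref{T2}, which we are allowed to assume. The compactness of $\operatorname{Iso}(M,g)$, and hence finiteness rather than mere discreteness, is classical.
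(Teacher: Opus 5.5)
Your proposal is correct and follows the paper's own argument. You use Theorem \ref{T2} to reduce to excluding nowhere vanishing Killing fields, exclude them by Poincar\'e--Hopf in case (1) and by Bott's theorem in case (2), and conclude from the trivial Lie algebra that $\operatorname{Iso}(M,g)$ is finite; you are in fact slightly more explicit than the paper, which leaves implicit the Myers--Steenrod compactness of $\operatorname{Iso}(M,g)$ needed to pass from discrete to finite.
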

	\begin{remark}
	In \cite{CH}, Chen and Han proved that if $(M,g)$ is a closed Riemannian manifold satisfying one of the following topological assumptions:
\begin{itemize}\vspace{-0.4\baselineskip}
	\item[(1)] the Euler characteristic $\chi(M) \neq 0$;\vspace{-0.6\baselineskip}
	\item[(2)] the signature $\sigma(M) \neq 0$; \vspace{-0.6\baselineskip}
	\item[(3)] or, in the spin case, the elliptic genus is nonzero,
\end{itemize}\vspace{-0.4\baselineskip}
and if the Ricci curvature satisfies a ``small'' positive upper bound of the form
\[
-\lambda_1 \leq \operatorname{Ric}(g) \leq \varepsilon, \quad \operatorname{diam}(g) \leq 1,
\]
with an additional integral curvature bound $\|R\|_{p}\leq \lambda_2$ in the non-K\"ahler settings (see \cite[Theorems 1.1--1.3]{CH} for details), then the isometry group $\operatorname{Iso}(M,g)$ is finite.  Their proof relies on the Atiyah--Singer index theorem and the rigidity of certain Dirac operators, showing that no nontrivial Killing vector field can exist under these assumptions. Furthermore, in the case $n \equiv 0 \pmod 4$, Corollary \ref{C2} improves their Theorem 1.2: the latter requires the signature  $\sigma(M) \neq 0$, while our result only requires that at least one Pontryagin number be non-zero. 
\end{remark}
	
Our main results, Theorems \ref{T2} and \ref{T3}, extend those of Bochner \cite{Boc} and Yano \cite{Yano1} in two directions: generalizing Killing vector fields to conformal Killing vector fields (i.e. from the isometry group $\operatorname{Iso}(M,g)$ to the conformal transformation group $\operatorname{Con}(M,g)$); and relaxing the curvature assumption from negative Ricci curvature to Ricci curvature with small positive upper bounds. Specifically, under these curvature assumptions, the non-vanishing of the Euler characteristic forces every conformal Killing vector field to vanish identically, which implies the finiteness of $\operatorname{Con}(M,g)$. 

Similar extension results, such as \cite{CH,LR}, are restricted to Killing vector fields and do not consider the conformal setting.
 In Chen and Han's approach \cite{CH}, their proof relies on the index theorem or Dirac operators; instead, we use a purely analytic method here.  Moreover, the upper bound on the Ricci curvature in our results is explicitly given (see Theorem \ref{T2}), whereas their corresponding bound  is obtained via an existence argument. Our analytic method goes back to Le Couturier and Robert  \cite{LR},  but their Harnack inequality cannot be directly applied to conformal Killing vector fields. We extend their approach by using a new Bochner technique and a modified Moser iteration procedure to establish a Harnack inequality suitable for the conformal setting.

\noindent\textbf{Outline of the proof}.
In what follows, our main purpose is to prove Theorem \ref{T2}. Throughout, the dimension of the manifold is assumed to be at least 2. We outline our proof strategy in two parts:

First, following the strategy developed in \cite{LR,PS}, we apply  a  Bochner technique to compute $\Delta |\nabla\theta|^2$ for the dual $1$-form $\theta$, in contrast to the approach which computes $\Delta_{d} |\theta|^2$.  The resulting Bochner identity for $\theta$ is given in Lemma \ref{K2}, whose proof relies on the preliminaries established in Section 2.1.

Second, in Proposition \ref{P6}, we modify the Moser iteration procedure from Aubry \cite[Proposition 4.2]{Aub} (see also \cite{ACGR,HW}) to establish an $L^{\infty}$ bound for $\nabla\theta$. The main difference from Aubry's argument lies in the derivation of the integral inequality in Lemma \ref{L7}. 
This  $L^{\infty}$ bound for $\nabla\theta$ yields the Harnack inequality stated in Theorem \ref{T4}. Based on this, we  prove Theorem \ref{T2} and all other results in Section 4.

\noindent\textbf{Notations}\\
Throughout this article, $\langle \cdot,\cdot\rangle$ stands for the inner product induced by the  Riemannian metric $g$. Let $\mathfrak{X}(M)$ be the set of all smooth vector fields on  $M$. We denote by  $R$ the curvature tensors of $M$, and by ${\rm{Ric}}$ the Ricci curvature of $M$.  The symbol $ \mathrm{Ric}^{-} $ denotes the lowest eigenvalue of the Ricci curvature $\mathrm{Ric}$, i.e., for each $p\in M$,
\[\mathrm{Ric}^{-}(p):=\min_{v\in T_{p}M,|v|=1}\mathrm{Ric}(v,v),\]
and we set
\[\underline{\mathrm{Ric}}^{-}:=\max\{-\mathrm{Ric}^{-},0 \}. \]
The normalized $L^{p}$-norms are defined by
$$\|f\|_{p}=\bigg{(} \frac{1}{{\rm{Vol}}(g)}\int_{M}|f|^{p}\bigg{)}^{\frac{1}{p}}.$$
and for a $1$-form $\theta$, $\|\theta\|_p=\||\theta|\|_p$.
We also let $H^1(M)$ be the Sobolev space of functions on $M$ with square integrable weak derivatives.

\section{Analytic Preliminaries for Conformal Killing Vector Fields}

\subsection{Fundamental Identities for the Dual 1-Form}
Let $(M,g)$ be a closed $n$-dimensional smooth Riemannian manifold. A smooth vector field $X$ is called a conformal Killing vector field if its Lie derivative satisfies 
$$\mathcal{L}_Xg=h g$$
for some smooth function $h \in C^\infty(M)$. Contracting both sides with $g^{ij}$, and using the identity $\mathrm{tr}_{g}(\mathcal{L}_Xg)=2\operatorname{div} X$, we obtain
	\[h=\frac{2}{n}\operatorname{div} X.\]
Denote by $\theta=X^\flat$ the  dual $1$-form of the conformal Killing vector field $X$. In this section, our goal is to derive several fundamental identities involving $\theta$, which will serve as the analytic basis for the subsequent Bochner-type estimates. 
	
We begin with the Bochner-Weitzenböck formula for the rough Laplacian acting on $\theta$. It is well known (cf. \cite[Section 2]{Semm}) that for the dual form of a conformal Killing vector field, the following identity holds: 
 \begin{align} 
 \nabla^*\nabla\theta&=\mathrm{Ric}(\theta)+\frac{n-2}{2}dh\nonumber\\
 &=\mathrm{Ric}(\theta)+\frac{n-2}{n}d(\operatorname{div} X)\nonumber\\
  &=\mathrm{Ric}(\theta)-\frac{n-2}{n}d\delta \theta.\label{mainKillForm1}
 \end{align}
Recall also the classical Bochner formula for the Hodge Laplacian $\Delta_d=d\delta+\delta d$: 
$$\Delta_d\theta=\nabla^*\nabla\theta+\mathrm{Ric}(\theta).$$
Substituting (\ref{mainKillForm1}) into the Bochner formula, we obtain
\[d\delta\theta+\delta d\theta=2\mathrm{Ric}(\theta)-\frac{n-2}{n}d\delta\theta.\]
After rearranging terms, this gives the following useful relation:
\begin{equation}\label{mainKillForm2}
\frac{2n-2}{n}d\delta \theta+\delta d\theta=2\mathrm{Ric}(\theta).
\end{equation}
These identities allow us to derive a global integral estimate for $\theta$.
 \begin{lemma}\label{K3}
Let $(M,g)$ be a closed $n$-dimensional smooth Riemannian manifold, and let $\theta$ be the $1$-form dual to a conformal Killing vector field. Then the following integral inequality holds:
 $$\int_M |\nabla \theta|^2\leq \int_M\left\langle\mathrm{Ric}(\theta),\theta\right\rangle.$$
 \end{lemma}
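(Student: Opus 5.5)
Pair identity (\ref{mainKillForm1}) with $\theta$ in $L^2$ and integrate by parts. Since $M$ is closed, $\int_M\langle\nabla^*\nabla\theta,\theta\rangle=\int_M|\nabla\theta|^2$ and $\int_M\langle d\delta\theta,\theta\rangle=\int_M|\delta\theta|^2$. Hence (\ref{mainKillForm1}) gives the exact identity
\[
\int_M|\nabla\theta|^2=\int_M\langle\mathrm{Ric}(\theta),\theta\rangle-\frac{n-2}{n}\int_M|\delta\theta|^2 .
\]
For $n\ge 2$ the coefficient $\frac{n-2}{n}$ is nonnegative, so the last term is $\le 0$. Dropping it yields the claimed inequality.

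There is no real obstacle. The only point to check is the sign convention: the paper writes $d(\operatorname{div}X)=-d\delta\theta$, which is consistent with $\delta=-\operatorname{div}$ on $1$-forms and with $\delta$ being the formal $L^2$-adjoint of $d$. With that convention, $\langle d\delta\theta,\theta\rangle_{L^2}=\|\delta\theta\|_{L^2}^2\ge0$, and the sign works out as stated.

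As a consistency check, one can also reach the bound from (\ref{mainKillForm2}). Pairing it with $\theta$ gives
\[
\frac{2n-2}{n}\|\delta\theta\|^2+\|d\theta\|^2=2\int_M\langle\mathrm{Ric}(\theta),\theta\rangle .
\]
Combining this with the pointwise decomposition
\[
|\nabla\theta|^2=\tfrac12|d\theta|^2+\tfrac1n(\delta\theta)^2,
\]
which holds because the symmetric part of $\nabla\theta$ is pure trace, $\frac{h}{2}g$, one gets
\[
\int_M|\nabla\theta|^2=\int_M\langle\mathrm{Ric}(\theta),\theta\rangle-\frac{n-2}{n}\|\delta\theta\|^2 .
\]
This is the same identity as above. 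With the normalization $|d\theta|^2=\sum_{i<j}$, the constants may shift, but the first route avoids that issue. So I would present the first, short argument.
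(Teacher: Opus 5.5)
Your argument is correct and is exactly the paper's proof: pair \eqref{mainKillForm1} with $\theta$ and integrate to get $\int_M|\nabla\theta|^2=\int_M\langle\mathrm{Ric}(\theta),\theta\rangle-\frac{n-2}{n}\int_M|\delta\theta|^2$, then drop the last term because $n\ge 2$. Your consistency check through \eqref{mainKillForm2} and the pointwise identity of Lemma~\ref{K4} also works with the paper's normalization $|d\theta|^2=\sum_{i<j}$, so no constants actually shift.
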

 \begin{proof}
Taking the inner product of \eqref{mainKillForm1} with $\theta$ and  integrating over $M$,  we get
\begin{align*}
\int_M |\nabla \theta|^2= \int_M\left\langle\mathrm{Ric}(\theta),\theta\right\rangle-\frac{n-2}{n}\int_M |\delta\theta|^2.
\end{align*}
When $n\geq2$, the second term on the right-hand side is non-positive (and vanishes identically when $n=2$), which proves the desired inequality.
 \end{proof}
Next, we recall a pointwise structural identity for $\na\theta$, which can be found in Semmelmann \cite{Semm}.
\begin{lemma}[\cite{{Semm}}]\label{K4}
 Let $(M,g)$ be a closed $n$-dimensional smooth Riemannian manifold, and let $\theta$ be the $1$-form dual to a conformal Killing vector field $X$. Then, for any vector field $Y\in \mathfrak{X}(M)$, the following pointwise equalities hold:
 \begin{equation}\label{nablaKill}
 \nabla_Y\theta=\frac{1}{2}\iota_{Y}d\theta-\frac{1}{n}\delta\theta\cdot\omega_Y,
 \end{equation}
 and consequently,
 $$|\nabla \theta|^2=\frac{1}{2}|d\theta|^2+\frac{1}{n}(\delta\theta)^2,$$
where $\omega_Y=Y^\flat$  denotes the dual $1$-form of $Y$.
 \end{lemma}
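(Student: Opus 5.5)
The plan is to decompose the covariant derivative $\nabla\theta$, viewed as a $(0,2)$-tensor, into its symmetric and antisymmetric parts, and then read off each part from the defining equation $\mathcal{L}_Xg=hg$ and from $d\theta$. We write $(\nabla\theta)(Y,Z)=(\nabla_Y\theta)(Z)$.

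\textbf{Symmetric part.} The standard identity
\[
(\mathcal{L}_Xg)(Y,Z)=(\nabla_Y\theta)(Z)+(\nabla_Z\theta)(Y)
\]
shows that the symmetric part of $\nabla\theta$ equals $\frac12\mathcal{L}_Xg=\frac h2 g$. Since $h=\frac2n\operatorname{div}X=-\frac2n\delta\theta$ (using the convention $\delta\theta=-\operatorname{div}X$, consistent with \eqref{mainKillForm1}), the symmetric part is $-\frac1n\delta\theta\, g$.

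\textbf{Antisymmetric part.} For the torsion-free connection we have
\[
d\theta(Y,Z)=(\nabla_Y\theta)(Z)-(\nabla_Z\theta)(Y),
\]
so the antisymmetric part of $\nabla\theta$ is $\frac12 d\theta$.

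\textbf{Recombining.} Adding the two parts gives
\[
(\nabla_Y\theta)(Z)=\tfrac12 d\theta(Y,Z)-\tfrac1n\delta\theta\, g(Y,Z).
\]
Since $\iota_Yd\theta(Z)=d\theta(Y,Z)$ and $g(Y,Z)=\omega_Y(Z)$, this is exactly \eqref{nablaKill}.

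\textbf{The norm identity.} Square the decomposition. The cross term vanishes because a symmetric tensor is pointwise orthogonal to an antisymmetric one. The symmetric part contributes $\frac1{n^2}(\delta\theta)^2|g|^2=\frac1n(\delta\theta)^2$, because $|g|^2=n$. For the antisymmetric part, with the convention that $|d\theta|^2$ is the form norm $\sum_{i<j}d\theta(e_i,e_j)^2$, the tensor norm of $\frac12 d\theta$ is
\[
\frac14\sum_{i,j}d\theta(e_i,e_j)^2=\frac12|d\theta|^2.
\]
Together these give $|\nabla\theta|^2=\frac12|d\theta|^2+\frac1n(\delta\theta)^2$.

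The main point requiring care is the bookkeeping of conventions: the sign of $\delta$ relative to $\operatorname{div}$, and the normalization of the norm on $2$-forms. Both have to agree with those already used in \eqref{mainKillForm1} and in the computation $h=\frac2n\operatorname{div}X$. The choices made above are the ones that reproduce the stated constants.
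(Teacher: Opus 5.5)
Your proof is correct and follows essentially the same route as the paper. Both arguments add the antisymmetric identity $d\theta(Y,Z)=(\nabla_Y\theta)(Z)-(\nabla_Z\theta)(Y)$ to the symmetric identity for $\mathcal{L}_Xg$, substitute $h=-\frac{2}{n}\delta\theta$, and then expand $|\nabla\theta|^2$ in an orthonormal frame; your orthogonality of symmetric and antisymmetric tensors is exactly the paper's observation that $d\theta(e_i,e_i)=0$ and $\langle e_i,e_j\rangle=0$ for $i\neq j$.
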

 \begin{proof}
For arbitrary vector fields $Y,Z\in \mathfrak{X}(M)$, the exterior derivative and the Lie derivative satisfy the pointwise relations
 	\begin{equation*}
 	\begin{split}
 	&d\theta(Y,Z)=\langle\na_{Y}X,Z\rangle-\langle\na_{Z}X,Y\rangle,\\
 	&\mathcal{L}_Xg(Y,Z)=\langle\na_{Y}X,Z\rangle+\langle\na_{Z}X,Y\rangle.\\
 	\end{split}
 	\end{equation*}
Adding these two identities gives
 $$d\theta(Y,Z)+\mathcal{L}_Xg(Y,Z)=2\langle\na_{Y}X,Z\rangle.$$
 Since $\mathcal{L}_Xg=hg$ and $h=\frac{2}{n}\mathrm{div}X=-\frac{2}{n}\delta\theta$, it follows that
 \begin{align*}
 (\nabla_Y\theta)(Z)=&\langle\nabla_YX,Z\rangle\\
 =&\frac{1}{2}d\theta(Y,Z)+\frac{1}{2}\mathcal{L}_Xg(Y,Z)\\
 =&\frac{1}{2}\iota_{Y}d\theta(Z)-\frac{1}{n}\delta \theta\cdot \langle Y,Z \rangle,
 \end{align*}
which is exactly \eqref{nablaKill}. To obtain the second identity, we choose a local orthonormal frame $\{e_i\}_{i=1}^n$. Then, using \eqref{nablaKill}, we compute
\begin{align*}
|\nabla \theta|^2=&\sum_{i=1}^n\langle \nabla_{e_i}\theta,\nabla_{e_i}\theta\rangle=\sum_{i,j=1}^n |\left(\nabla_{e_i}\theta\right)(e_j)|^2\\
=&\sum_{i,j=1}^n\left[\frac{1}{2}d\theta(e_i,e_j)-\frac{1}{n}\delta\theta\cdot \langle e_i,e_j\rangle\right]^2\\
=&\sum_{\substack{i,j=1\\i\neq j}}^n\left(\frac{1}{2}d\theta(e_i,e_j)\right)^2+\sum_{i=1}^{n}\left(\frac{1}{n}\delta\theta\cdot \langle e_i,e_i\rangle\right)^{2}\\  
=&\frac{1}{2}|d\theta|^2+\frac{1}{n}(\delta\theta)^2,
\end{align*}
where the last equality uses $d\theta(e_i,e_i)=0$ and
\begin{equation*}
|d\theta|^{2}=\sum_{1\leq i<j\leq n}\left(d\theta(e_{i},e_{j})\right)^{2}=\sum_{i,j=1}^{n}\frac{1}{2}\left(d\theta(e_{i},e_{j})\right)^{2}.
\end{equation*}
 \end{proof}
As a consequence of Lemma \ref{K4}, we obtain the following inner product identity involving the Hessian of $\delta\theta$.
  \begin{lemma}\label{K5}
 Let $(M,g)$ be a closed $n$-dimensional smooth Riemannian manifold, and let $\theta$ be the $1$-form dual to a conformal Killing vector field.  The following pointwise identity holds:
 $$\langle \mathrm{Hess}\, \delta \theta,\nabla \theta\rangle=\frac{1}{n}\delta \theta\cdot\Delta_d\delta \theta.$$
 \end{lemma}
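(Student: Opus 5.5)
We compute pointwise in a local orthonormal frame $\{e_i\}$, using the decomposition of $\nabla\theta$ from Lemma \ref{K4}. Write $f:=\delta\theta$. Then $(\nabla_{e_i}\theta)(e_j)=\frac12 d\theta(e_i,e_j)-\frac1n f\,\delta_{ij}$. Since $\mathrm{Hess}\,f$ is a symmetric $2$-tensor, pairing it with $\nabla\theta$ sees only the symmetric part of $\nabla\theta$. The antisymmetric piece $\frac12 d\theta(e_i,e_j)$ therefore contributes nothing:
\[
\sum_{i,j}(\mathrm{Hess}\,f)(e_i,e_j)\,d\theta(e_i,e_j)=0,
\]
because $(\mathrm{Hess}\,f)(e_i,e_j)=(\mathrm{Hess}\,f)(e_j,e_i)$ while $d\theta(e_i,e_j)=-d\theta(e_j,e_i)$.

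Only the trace part remains, and it gives
\[
\langle \mathrm{Hess}\,f,\nabla\theta\rangle=-\frac1n f\sum_i(\mathrm{Hess}\,f)(e_i,e_i)=-\frac1n f\,\mathrm{tr}\,\mathrm{Hess}\,f .
\]
With the paper's sign conventions ($\Delta_d=d\delta+\delta d$ is nonnegative, so on functions $\Delta_d f=\delta df=-\mathrm{tr}\,\mathrm{Hess}\,f$), this equals $\frac1n f\,\Delta_d f$. That is the claimed identity
\[
\langle \mathrm{Hess}\,\delta\theta,\nabla\theta\rangle=\frac1n\,\delta\theta\cdot\Delta_d\delta\theta .
\]

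The main point to check is the convention for pairing $\mathrm{Hess}\,f$ with $\nabla\theta$: whether $\nabla\theta$ is read as $(Y,Z)\mapsto(\nabla_Y\theta)(Z)$ or with the slots transposed. Because only the symmetric part survives, the result is the same either way. The other check is the sign $\Delta_d=-\mathrm{tr}\,\mathrm{Hess}$ on functions, which is consistent with $\delta\theta=-\operatorname{div}X$ as used in the proof of Lemma \ref{K4}. Neither the curvature identity \eqref{mainKillForm1} nor \eqref{mainKillForm2} is needed; the identity is a purely algebraic consequence of \eqref{nablaKill}.
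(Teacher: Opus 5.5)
Your proposal is correct and follows essentially the same route as the paper's proof. You substitute the decomposition \eqref{nablaKill} of $\nabla\theta$, observe that the skew part $\frac12 d\theta$ pairs to zero against the symmetric $\mathrm{Hess}\,\delta\theta$, and convert the surviving trace term using $\Delta_d f=-\mathrm{tr}\,\mathrm{Hess}\,f$ on functions.
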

 \begin{proof}
 Fix a point $p\in M$ and choose a local orthonormal frame  $\{e_i\}_{i=1}^n$ in a neighborhood of $p$ such that $\left.\nabla_{e_i}e_j\right|_p=0$ for all $1\leq i,j\leq n$. At $p$,  Lemma \ref{K4} gives
 \begin{align*}
\langle \mathrm{Hess}\, \delta \theta,\nabla \theta\rangle
=&\sum_{i,j=1}^n \mathrm{Hess}\, \delta \theta(e_i,e_j)\cdot\left(\nabla_{e_i} \theta\right)(e_j)\\
=&\sum_{i,j=1}^n \mathrm{Hess}\, \delta \theta(e_i,e_j)\cdot\left[\frac{1}{2}d\theta(e_i,e_j)-\frac{1}{n}\langle e_i,e_j\rangle\cdot\delta \theta\right]\\
=&-\frac{1}{n}\sum_{i=1}^n \mathrm{Hess}\, \delta \theta(e_i,e_i)\cdot\delta \theta\\
=&\frac{1}{n}\delta \theta\cdot\Delta_d\delta \theta,
\end{align*}
where the second last equality follows from the fact that $\mathrm{Hess}\, \delta \theta$ is symmetric while $d\theta$ is skew-symmetric.
 \end{proof}
Finally, we derive a differential inequality for $|\theta|$, which will be essential for the Moser iteration in later sections.
\begin{lemma}\label{K1}
Let $(M,g)$ be a closed $n$-dimensional smooth Riemannian manifold, and let $\theta$ be the $1$-form dual to a conformal Killing vector field. Then the following pointwise inequality holds:
$$|\theta| \cdot \Delta_d|\theta| \leq \left\langle\mathrm{Ric}(\theta)-\frac{n-2}{n}d\delta\theta,\theta\right\rangle.$$
\end{lemma}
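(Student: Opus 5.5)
Work first on the open set $\{|\theta|>0\}$, where $|\theta|$ is smooth, and then extend. The sign convention here is $\Delta_d f=\delta d f=-\operatorname{div}\nabla f$ on functions, so $\Delta_d$ is nonnegative.

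\textbf{Step 1: a pointwise identity for $\tfrac12\Delta_d|\theta|^2$.} A direct computation in an orthonormal frame gives
\begin{equation*}
\tfrac{1}{2}\Delta_d|\theta|^2=\langle\nabla^*\nabla\theta,\theta\rangle-|\nabla\theta|^2 .
\end{equation*}
Substituting \eqref{mainKillForm1} yields
\begin{equation*}
\tfrac{1}{2}\Delta_d|\theta|^2=\Big\langle \mathrm{Ric}(\theta)-\tfrac{n-2}{n}d\delta\theta,\theta\Big\rangle-|\nabla\theta|^2 .
\end{equation*}

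\textbf{Step 2: rewrite the left side in terms of $|\theta|$.} On $\{|\theta|>0\}$ we have
\begin{equation*}
\tfrac12\Delta_d|\theta|^2=|\theta|\,\Delta_d|\theta|-|d|\theta||^2 .
\end{equation*}
Combining with Step 1,
\begin{equation*}
|\theta|\Delta_d|\theta|=\Big\langle \mathrm{Ric}(\theta)-\tfrac{n-2}{n}d\delta\theta,\theta\Big\rangle-\big(|\nabla\theta|^2-|d|\theta||^2\big).
\end{equation*}

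\textbf{Step 3: Kato's inequality.} Kato's inequality $|d|\theta||\le|\nabla\theta|$ makes the bracketed term nonnegative, which gives the claimed inequality on $\{|\theta|>0\}$. No refined Kato constant or use of Lemma \ref{K4} is needed.

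\textbf{Step 4: the zero set, which is the only real obstacle.} On the interior of $\{\theta=0\}$ both sides vanish. At boundary points of the zero set, $|\theta|$ is only Lipschitz, so the inequality must be read in the weak (distributional) sense, which is the sense used later in the Moser iteration.

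To handle this, I would use the regularization $f_\epsilon=\sqrt{|\theta|^2+\epsilon^2}$. The same computation as in Steps 1--3 gives
\begin{equation*}
f_\epsilon\Delta_d f_\epsilon=\tfrac12\Delta_d|\theta|^2+|df_\epsilon|^2,
\end{equation*}
and also $|df_\epsilon|\le|\nabla\theta|$. Together these yield
\begin{equation*}
f_\epsilon\Delta_d f_\epsilon\le\Big\langle \mathrm{Ric}(\theta)-\tfrac{n-2}{n}d\delta\theta,\theta\Big\rangle
\end{equation*}
pointwise everywhere. Letting $\epsilon\to0$ against nonnegative test functions then gives the inequality in the weak sense on all of $M$.

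Alternatively, one can note that the zero set of a nontrivial conformal Killing field consists of isolated points or submanifolds of codimension at least $2$, so it is negligible. The regularization is the cleaner route and is the one I would write down.
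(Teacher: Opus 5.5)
Your proposal is correct and follows the paper's proof. Steps 1--3 match it exactly: the identity $\tfrac12\Delta_d|\theta|^2=\langle\nabla^*\nabla\theta,\theta\rangle-|\nabla\theta|^2$, the product rule for $\Delta_d|\theta|^2$, Kato's inequality and substitution of \eqref{mainKillForm1}. Your Step 4 regularization with $f_\epsilon=\sqrt{|\theta|^2+\epsilon^2}$ is a correct addition the paper leaves out, and it justifies the inequality across the zero set of $\theta$ in exactly the weak form used later in Lemma \ref{L4}.
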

\begin{proof}
We first recall the  identity 	
	\[\frac{1}{2}\Delta_d|\theta|^{2}= -|\nabla\theta|^{2}+\langle\nabla^{*}\nabla\theta,\theta\rangle.\]
On the other hand, the Hodge Laplacian acting on the function $|\theta|^2$ yields:
	\[
	\frac{1}{2}\Delta_d|\theta|^{2} = |\theta| \cdot \Delta_d|\theta| - \left|\nabla|\theta|\right|^{2}.
	\]
Combining these two relations, using the Kato inequality $\left|\nabla|\theta|\right|\leq |\nabla\theta|$ and (\ref{mainKillForm1}), we obtain
\begin{align*}
	|\theta| \cdot \Delta_d|\theta| =&\left|\nabla|\theta|\right|^{2}-|\nabla\theta|^{2}+\langle\nabla^{*}\nabla\theta,\theta\rangle\\
	\leq&\langle\nabla^{*}\nabla\theta,\theta\rangle\\
	= & \left\langle \mathrm{Ric}(\theta)-\frac{n-2}{n}d\delta\theta,\theta\right\rangle.
	\end{align*}	
\end{proof}

\subsection{ A Higher-Order Bochner Identity}
In this subsection, we develop a refined Bochner-type identity for the  1-form $\theta$ dual to a conformal Killing vector field. More precisely, instead of applying the classical Bochner formula to $|\theta|^{2}$, we apply it to $|\na\theta|^{2}$, which yields a higher-order estimate essential for the Moser iteration in Section \ref{Sec}. To this end, we shall work in the framework of $T^*M$-valued differential forms.

Let  $\Lambda^p(T^*M) \otimes T^*M$ denote the bundle of $T^*M$-valued $p$-forms. Its sections are smooth tensor fields that take values in the cotangent bundle. We refer to \cite[Section 2]{PS} for a comprehensive treatment in the more general setting of forms valued in an arbitrary vector bundle; here we recall only the necessary definitions.

Let $\nabla$ be the connection on $\Lambda^p(T^*M) \otimes T^*M$  induced by the Levi-Civita connection of $(M,g)$. For any section $T$ of this bundle, the second covariant derivative is defined by
$$
\nabla_{X, Y}^2 T=\nabla_X \nabla_Y T-\nabla_{\nabla_X Y} T,\quad X,Y\in \mathfrak{X}(M).$$ 
Choosing a local orthonormal frame $\{e_i\}_{i=1}^n$, the rough Laplacian (or connection Laplacian) $\overline{\Delta}=\nabla^* \nabla$ is given by
$$\nabla^* \nabla T = -\sum_{i=1}^n \nabla_{e_i, e_i}^2 T.$$
The curvature operator $R(X,Y)$ acting on $T^*M$-valued $p$-forms is defined as the commutator of second covariant derivatives:
$$R(X, Y) T = \nabla_{X, Y}^2 T - \nabla_{Y, X}^2 T.$$
 We also recall the exterior covariant derivative $d^\nabla$, which maps $T^*M$-valued $p$-forms  to $T^*M$-valued $(p+1)$-forms. For $X_0, \ldots, X_p \in\mathfrak{X}(M)$,  it is defined by
\begin{equation}\label{defofDnabla}
(d^\nabla T)\left(X_0, \ldots, X_p\right) = \sum_{i=0}^p (-1)^i \left(\nabla_{X_i} T\right)\left(X_0, \ldots, \hat{X}_i, \ldots, X_p\right).
\end{equation}
The codifferential $\delta^\nabla$ is the formal $L^{2}$-adjoint of $d^\nabla$, acting on a $T^*M$-valued $(p+1)$-form $S$ by negative contraction (cf. \cite[Page 76]{PS}):
\begin{equation}\label{defofdeltanabla}
(\delta^\nabla S)(X_1,\ldots, X_p) = -\sum_{i=1}^n (\nabla_{e_i}S)(e_i, X_1,\ldots, X_p).
\end{equation}
For the case $p=1$, i.e., for $T^{\ast}M$-valued $1$-forms, the generalized Bochner-Weitzenböck formula (cf. \cite[(2.8)]{PS}) states that
$$ \left(d^\nabla \delta^\nabla + \delta^\nabla d^\nabla\right) T = \nabla^* \nabla T + \mathrm{Ric}(T),$$
where $\mathrm{Ric}(T)$ is a $T^*M$-valued $1$-form. In a local orthonormal frame $\{e_i\}_{i=1}^n$, it is given explicitly by
\begin{equation}\label{Ricdef}
\mathrm{Ric}(T)(Y) = \sum_{i=1}^n \left(R(e_i, Y)T\right)(e_i)= \sum_{i=1}^n \left[R(e_i, Y)\left(T(e_i)\right)-T\left(R(e_i, Y)e_i\right)\right]
\end{equation}
for any $Y\in \mathfrak{X}(M)$.

We now apply this machinery to the $T^{\ast}M$-valued $1$-form $\na\theta$, where $\theta$ is the $1$-form dual to a conformal Killing vector field. Following the computation in \cite[Page 81]{PS}, we derive the pointwise identity below for $\Delta_d(|\nabla \theta|^2)$.

\begin{lemma}\label{K2}
	Let $(M,g)$ be a closed $n$-dimensional smooth Riemannian manifold, and let $\theta$ be the $1$-form dual to a conformal Killing vector field. Then the following pointwise identity holds:
	\begin{align}
	\frac{1}{2}\Delta_d(|\nabla \theta|^2)+\left|\nabla^2 \theta\right|^2\nonumber 
	=& \left\langle\nabla\mathrm{Ric}(\theta),\nabla \theta\right\rangle-\frac{n-2}{n(n-1)}\delta \theta\cdot\delta\left[\mathrm{Ric}(\theta)\right]\\
	&+\left\langle \nabla^*\left[R(\cdot,\cdot)\theta\right],\nabla \theta\right\rangle-\left\langle\mathrm{Ric}(\nabla \theta),\nabla \theta\right\rangle,\label{InequalityforGra}
	\end{align}
where the local coordinate expressions for  $\nabla^*[R(\cdot,\cdot)\theta]$ and $\langle \mathrm{Ric}(\nabla \theta), \nabla \theta \rangle$ are given in \eqref{Rtheta} and \eqref{Rictheta}, respectively.
\end{lemma}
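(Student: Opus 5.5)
We start from the standard Bochner identity for the $T^*M$-valued $1$-form $T=\nabla\theta$:
\[
\tfrac12\Delta_d|\nabla\theta|^2+|\nabla^2\theta|^2=\langle\nabla^*\nabla(\nabla\theta),\nabla\theta\rangle ,
\]
and then identify the right-hand side. By the generalized Weitzenb\"ock formula for $T^*M$-valued $1$-forms,
\[
\nabla^*\nabla(\nabla\theta)=(d^\nabla\delta^\nabla+\delta^\nabla d^\nabla)(\nabla\theta)-\mathrm{Ric}(\nabla\theta).
\]
This already produces the last term $-\langle\mathrm{Ric}(\nabla\theta),\nabla\theta\rangle$ in \eqref{InequalityforGra}. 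It remains to compute the two pieces $d^\nabla\delta^\nabla\nabla\theta$ and $\delta^\nabla d^\nabla\nabla\theta$.

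\textbf{Step 1.} By \eqref{defofdeltanabla} we have $\delta^\nabla\nabla\theta=\nabla^*\nabla\theta$. Using \eqref{mainKillForm1}, this equals $\mathrm{Ric}(\theta)-\frac{n-2}{n}d\delta\theta$. Hence
\[
d^\nabla\delta^\nabla\nabla\theta=\nabla\mathrm{Ric}(\theta)-\tfrac{n-2}{n}\,\mathrm{Hess}\,\delta\theta .
\]
Pairing with $\nabla\theta$ and invoking Lemma~\ref{K5}, we get
\[
\langle\nabla\mathrm{Ric}(\theta),\nabla\theta\rangle-\tfrac{n-2}{n^2}\,\delta\theta\,\Delta_d\delta\theta .
\]

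\textbf{Step 2.} By \eqref{defofDnabla}, $(d^\nabla\nabla\theta)(X,Y)=\nabla^2_{X,Y}\theta-\nabla^2_{Y,X}\theta=R(X,Y)\theta$. Therefore $\delta^\nabla d^\nabla\nabla\theta=\nabla^*[R(\cdot,\cdot)\theta]$ with the sign conventions of \eqref{defofdeltanabla}, and this yields the third term of \eqref{InequalityforGra}. We will write it in a local frame as the expression \eqref{Rtheta}.

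\textbf{Step 3.} The stray term $\delta\theta\,\Delta_d\delta\theta$ has to be eliminated. Applying $\delta$ to \eqref{mainKillForm2} and using $\delta\delta=0$ gives
\[
\tfrac{2n-2}{n}\,\Delta_d\delta\theta=2\,\delta[\mathrm{Ric}(\theta)],\qquad\text{so}\qquad \Delta_d\delta\theta=\tfrac{n}{n-1}\,\delta[\mathrm{Ric}(\theta)].
\]
Substituting this into Step 1 turns the coefficient into
\[
-\tfrac{n-2}{n^2}\cdot\tfrac{n}{n-1}=-\tfrac{n-2}{n(n-1)},
\]
which is exactly the second term of \eqref{InequalityforGra}.

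The only delicate point should be Step 2 together with the first identity: keeping the sign and normalization conventions for $R(X,Y)$, $\delta^\nabla$ and $\mathrm{Ric}(T)$ consistent with \cite{PS}. To handle this, I would carry out the computation in a normal frame at a point, writing each term in components, and verify that the curvature contributions combine into the frame expressions \eqref{Rtheta} and \eqref{Rictheta} with the stated signs.
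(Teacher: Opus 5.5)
Your proposal is correct and follows essentially the same route as the paper: the Bochner identity for $T=\nabla\theta$, the Weitzenb\"ock formula for $T^*M$-valued $1$-forms, $\delta^\nabla\nabla\theta=\nabla^*\nabla\theta$ combined with \eqref{mainKillForm1} and Lemma~\ref{K5}, and $d^\nabla\nabla\theta=R(\cdot,\cdot)\theta$. As in the paper, you then apply $\delta$ to \eqref{mainKillForm2} to convert $\frac{n-2}{n^2}\,\delta\theta\,\Delta_d\delta\theta$ into $\frac{n-2}{n(n-1)}\,\delta\theta\,\delta[\mathrm{Ric}(\theta)]$, and your coefficients and sign conventions match the paper's throughout.
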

\begin{proof}
We begin with the standard pointwise identity
 \begin{align*}
 \frac{1}{2}\Delta_d(|\nabla \theta|^2)
=&\langle \nabla^*\nabla \nabla \theta,\nabla \theta\rangle-\left|\nabla^2 \theta\right|^2,
\end{align*}
Thus, it suffices to compute the term $\langle \nabla^*\nabla \nabla \theta,\nabla \theta\rangle$. To this end, we view $\na\theta$ as a $T^{\ast}M$-values $1$-form. The generalized Bochner-Weitzenböck formula gives 
\begin{equation}\label{K2111}
\langle \nabla^*\nabla \nabla \theta,\nabla \theta\rangle=\langle d^\nabla\delta^\nabla \nabla \theta,\nabla \theta\rangle+\langle \delta^\nabla d^\nabla\nabla \theta,\nabla \theta\rangle-\langle \mathrm{Ric}(\nabla \theta),\nabla \theta\rangle.
\end{equation}
We now evaluate the two terms on the right-hand side separately.\\
\textbf{First term} $\langle d^\nabla\delta^\nabla \nabla \theta,\nabla \theta\rangle$. When the operator $d^\nabla$ acts, the $1$-forms $\mathrm{Ric}(\theta)$ and $d\delta \theta$ are viewed as  $T^*M$-valued $0$-forms. Recalling the definition of  $d^\nabla$ in $\eqref{defofDnabla}$ and the equation \eqref{mainKillForm1}, it follows that 
$$d^\nabla\delta^\nabla \nabla \theta=d^\nabla(\nabla^*\nabla \theta)=d^\nabla\left[\mathrm{Ric}(\theta)-\frac{n-2}{n}d\delta \theta\right]=\nabla\mathrm{Ric}(\theta)-\frac{n-2}{n}d^\nabla d\delta \theta.$$ 
Note that $d^\nabla d\delta \theta=\nabla d\delta \theta=\mathrm{Hess}\, \delta \theta$. 
From Lemma \ref{K5}, we arrive at
\begin{align*}
\langle d^\nabla d\delta \theta,\nabla \theta\rangle=&\langle \mathrm{Hess}\, \delta \theta,\nabla \theta\rangle
=\frac{1}{n}\delta \theta\cdot\Delta_d\delta \theta.
\end{align*}
On the other hand, applying the codifferential $\delta$ to both sides of the identity \eqref{mainKillForm2} yields
 $$\frac{n-1}{n} \Delta_d(\delta \theta)=\delta\left[\mathrm{Ric}(\theta)\right].$$
Combining these, we get
\begin{align}
\langle d^\nabla\delta^\nabla \nabla \theta,\nabla \theta\rangle=&\langle \nabla\mathrm{Ric}(\theta),\nabla \theta\rangle-\frac{n-2}{n^2}\delta \theta\cdot\Delta_d\delta \theta\nonumber\\
=&\langle \nabla\mathrm{Ric}(\theta),\nabla \theta\rangle-\frac{n-2}{n(n-1)}\delta \theta\cdot\delta\left[\mathrm{Ric}(\theta)\right].\label{K2222}
\end{align}
\textbf{Second term} $\langle \delta^\nabla d^\nabla\nabla \theta,\nabla \theta\rangle$. For arbitrary vector fields $Y,Z\in \mathfrak{X}(M)$, a direct computation gives 
\begin{align*}
d^\nabla\nabla \theta(Y,Z)=&\left[\nabla_{Y}(\nabla \theta)\right](Z)- \left[\nabla_{Z}(\nabla \theta)\right](Y)\\
=&\nabla_{Y}\nabla_Z \theta-\nabla_{\nabla_YZ}\theta-\nabla_{Z}\nabla_Y \theta+\nabla_{\nabla_ZY}\theta\\
=&R(Y,Z)\theta.
\end{align*}
Thus, $R(\cdot,\cdot)\theta$, which may be viewed as a $T^*M$-valued $2$-form. Applying the codifferential $\delta^\nabla$ as defined in \eqref{defofdeltanabla}, and noting that $\de^{\na}$  coincides with the standard divergence operator $\na^{\ast}$ on such forms (cf. \cite[Page 76]{PS}), we obtain
\begin{equation}\label{K2333}
\langle \delta^\nabla d^\nabla\nabla \theta,\nabla \theta\rangle=\langle \nabla^*\left[R(\cdot,\cdot)\theta\right],\nabla \theta\rangle.
\end{equation}
Substituting \eqref{K2222} and \eqref{K2333} into \eqref{K2111} yields the \eqref{InequalityforGra}. It remains only to provide the local coordinate expressions for the curvature terms, which we do below for completeness.

Fix a point $p\in M$ and choose a local orthonormal frame $\{e_i\}_{i=1}^n$ in a neighborhood of $p$ such that $\left.\nabla_{e_i}e_j\right|_p=0$ for all $1\leq i,j\leq n$. At $p$, the term $\nabla^*[R(\cdot,\cdot)\theta]$ in \eqref{InequalityforGra}  is given by
\begin{align}
\nabla^*\left[R(\cdot,\cdot)\theta\right](Y)
=&-\sum_{i=1}^{n}(\na_{e_{i}}\left[R(\cdot,\cdot)\theta\right] )(e_{i},Y) \nonumber \\
=&\sum_{i=1}^n-\nabla_{e_i}\left[R(e_i,Y)\theta\right]+R(\nabla_{e_i}e_i,Y)\theta+R(e_i,\nabla_{e_i}Y)\theta\nonumber \\
=&\sum_{i=1}^n-\nabla_{e_i}\left[R(e_i,Y)\theta\right]+R(e_i,\nabla_{e_i}Y)\theta,\label{Rtheta}
\end{align}
for any vector field $Y\in \mathfrak{X}(M)$. Similarly, by \eqref{Ricdef}, the term $\left\langle\mathrm{Ric}(\nabla \theta),\nabla \theta\right\rangle$ in \eqref{InequalityforGra} can be expressed as
\begin{align}
\left\langle\mathrm{Ric}(\nabla \theta),\nabla \theta\right\rangle=&\sum_{j=1}^n\left\langle\left[\mathrm{Ric}(\nabla \theta)\right](e_j),\nabla_{e_j} \theta\right\rangle\nonumber\\
=&\sum_{i,j=1}^n\left\langle R(e_i,e_j)(\nabla_{e_i} \theta)-\nabla_{R(e_i,e_j)e_i}\theta,\nabla_{e_j} \theta\right\rangle\nonumber\\
=&\sum_{i,j=1}^n\left\langle R(e_i,e_j)(\nabla_{e_i} \theta),\nabla_{e_j} \theta\right\rangle+\sum_{i,j=1}^n\left\langle\nabla_{\mathrm{Ric}(e_j)}\theta,\nabla_{e_j} \theta\right\rangle.\label{Rictheta}
\end{align}
This completes the proof. 
\end{proof}

\begin{remark}
We point out that
$$\langle d^\nabla\delta^\nabla \nabla \theta,\nabla \theta\rangle=\langle \nabla\nabla^* \nabla \theta,\nabla \theta\rangle.$$
Combining this with  (\ref{K2222}), we obtain the identity
$$\langle \nabla\mathrm{Ric}(\theta),\nabla \theta\rangle-\frac{n-2}{n(n-1)}\delta \theta\cdot\delta\left[\mathrm{Ric}(\theta)\right]=\langle \nabla\nabla^* \nabla \theta,\nabla \theta\rangle.$$
Consequently, for any general $1$-form $\theta$, the formula \eqref{InequalityforGra} reduces to
$$
\frac{1}{2}\Delta_d(|\nabla \theta|^2)+\left|\nabla^2 \theta\right|^2
=\langle \nabla\nabla^* \nabla \theta,\nabla \theta\rangle+\left\langle \nabla^*\left[R(\cdot,\cdot)\theta\right],\nabla \theta\right\rangle-\left\langle\mathrm{Ric}(\nabla \theta),\nabla \theta\right\rangle.
$$
Using the local coordinate expressions for the terms $\left\langle \nabla^*\left[R(\cdot,\cdot)\theta\right],\nabla \theta\right\rangle$ and $\left\langle\mathrm{Ric}(\nabla \theta),\nabla \theta\right\rangle$ in Lemma \ref{K2}, the above formula coincides with that in  \cite{Aub,ACGR,LR}. 
 \end{remark}

\section{Estimates for the Dual $1$-Form}\label{Sec}
\subsection{$L^{\infty}$ Estimate for the Dual $1$-form}
In this subsection, we establish an $L^{\infty}$ estimate for the  $1$-form $\theta$ dual to a conformal Killing vector field. The argument is a Moser iteration procedure, adapted from  \cite[Lemma 4.1]{PS}. 

We assume throughout that the manifold satisfies the following Sobolev inequality:
\begin{equation} \label{Sobolev32}
\|f\|_{\frac{2n}{n-2}} \leq \|f\|_{2} + C_{s}\|df\|_{2}, \quad \text{for all } f \in H^1(M).
\end{equation}
where $C_{s}>0$ is the Sobolev constant.

\begin{lemma}\label{L4}
	Let $(M,g)$ be a closed $n$-dimensional smooth Riemannian manifold satisfying the Sobolev inequality \eqref{Sobolev32}. Suppose that $\theta$ is the $1$-form dual to a conformal Killing vector field $X$, and assume that the Ricci curvature satisfies 
	$$\mathrm{Ric}<\la,$$
	for some positive constant $\la$. Then
	\begin{equation}\label{E1}
	\|\theta\|_{\infty} \leq \exp\left\{C(n)\sqrt{\lambda}C_{s}\right\} \|\theta\|_{2}.
	\end{equation}
\end{lemma}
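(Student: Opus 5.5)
We will run a Moser iteration on the function $u=|\theta|$, taking the pointwise inequality of Lemma \ref{K1} as the starting point. The only difficulty compared with the Killing case is the extra term $-\frac{n-2}{n}\langle d\delta\theta,\theta\rangle$. It has no sign, so it has to be absorbed after integration by parts.

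\textbf{Step 1 (integral inequality).} For $k\ge1$, multiply Lemma \ref{K1} by $u^{2k-2}$ and integrate over $M$. Since $\mathrm{Ric}<\lambda$, the left side and the Ricci term give
$$\int_M u^{2k-1}\Delta_d u=(2k-1)\int_M u^{2k-2}|du|^2,\qquad \int_M u^{2k-2}\langle \mathrm{Ric}(\theta),\theta\rangle\le \lambda\int_M u^{2k}.$$
The remaining term is
$$-\frac{n-2}{n}\int_M u^{2k-2}\langle d\delta\theta,\theta\rangle=-\frac{n-2}{n}\int_M\Big[u^{2k-2}(\delta\theta)^2+(\delta\theta)\langle d(u^{2k-2}),\theta\rangle\Big],$$
where we used $\int\langle d f,\alpha\rangle=\int f\,\delta\alpha$ with $f=\delta\theta$ and $\alpha=u^{2k-2}\theta$, together with $\delta(u^{2k-2}\theta)=u^{2k-2}\delta\theta-\langle d(u^{2k-2}),\theta\rangle$. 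The first piece is nonpositive. For the second, $|d(u^{2k-2})|\,|\theta|=(2k-2)u^{2k-2}|du|$, so Cauchy--Schwarz gives
$$\frac{n-2}{n}\int_M(2k-2)u^{2k-2}|\delta\theta|\,|du|\le\frac{n-2}{n}\int_M u^{2k-2}(\delta\theta)^2+\frac{n-2}{4n}(2k-2)^2\int_M u^{2k-2}|du|^2.$$
The $(\delta\theta)^2$ part cancels against the good term. The leftover $(k-1)^2$ term, however, has a bad $k$-dependence compared with the left side $(2k-1)\int u^{2k-2}|du|^2$, and this is the main obstacle.

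\textbf{Handling the obstacle.} I would first try to avoid splitting off the $d\delta\theta$ term at all. Instead, use the refined Kato inequality for conformal Killing forms, $|\nabla|\theta||^2\le c_n|\nabla\theta|^2$ with some $c_n<1$, which is available because $\nabla\theta$ has the special structure given in Lemma \ref{K4}. Then in the proof of Lemma \ref{K1} we keep the good term $(1-c_n)|\nabla\theta|^2$ and use $|\delta\theta|\le\sqrt n|\nabla\theta|$ from Lemma \ref{K4}. This absorbs the integrated cross term $\frac{n-2}{n}(2k-2)\int u^{2k-2}|\delta\theta||du|$ for moderate $k$. If this does not work uniformly in $k$, the fallback is the standard device of applying the inequality with exponents $k_j=(n/(n-2))^j$ and tolerating constants growing polynomially in $k_j$: the final product $\prod_j (Ck_j)^{1/k_j}$ converges, and only the $\lambda$-dependence is tracked. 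In either case the target intermediate inequality is
$$\int_M |d(u^k)|^2\le C(n)\,k\,\lambda\int_M u^{2k}$$
(possibly with an extra factor polynomial in $k$).

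\textbf{Step 2 (iteration).} Apply the Sobolev inequality \eqref{Sobolev32} to $f=u^k$:
$$\|u\|_{2k\nu}^{k}\le\big(1+C_s\sqrt{C(n)k\lambda}\big)\|u\|_{2k}^{k},\qquad \nu=\tfrac{n}{n-2}.$$
Iterating with $k_j=\nu^j$ gives
$$\|u\|_\infty\le\prod_j\big(1+C_s\sqrt{C k_j\lambda}\big)^{1/k_j}\|u\|_2\le\exp\Big(\sum_j \sqrt{C\lambda}\,C_s\,\nu^{-j/2}\Big)\|u\|_2,$$
using $\log(1+x)\le x$. The geometric series converges to a constant $C(n)$, which yields \eqref{E1}. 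For $n=2$ we replace $\nu$ by any fixed finite Sobolev exponent.
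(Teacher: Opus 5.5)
There is a genuine gap. With your $u=|\theta|$, Step 1 never closes for large $k$, and neither proposed fix repairs it. After your Cauchy--Schwarz step the inequality reads
\[
(2k-1)\int_M u^{2k-2}|du|^2\le\lambda\int_M u^{2k}+\frac{n-2}{n}(k-1)^2\int_M u^{2k-2}|du|^2 .
\]
This says nothing once $(n-2)(k-1)^2\ge n(2k-1)$, whereas the Moser iteration needs $k=\nu^j\to\infty$.

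The refined Kato inequality does hold here (even with $c_n=\frac12$), but it only gives a good term $(1-c_n)\int_M u^{2k-2}|\nabla\theta|^2$ whose coefficient does not grow with $k$. The cross term you must absorb, $(2k-2)\frac{n-2}{n}\int_M u^{2k-2}|\delta\theta|\,|du|$, does grow with $k$, so this also fails for large $k$, as you suspected. Tolerating constants polynomial in $k$ does not help either. The obstruction is not a large constant in front of $\lambda\int_M u^{2k}$. It is a gradient term whose coefficient on the right exceeds the one on the left, so for large $k$ you get no bound on $\int_M|d(u^k)|^2$ at all.

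The missing idea is that the cross term is not an error term: the conformal Killing equation determines $\langle du,\theta\rangle$ exactly. Evaluating $\mathcal{L}_Xg=hg$ on $(X,X)$ gives
\[
X(|X|^2)=2\langle\nabla_XX,X\rangle=h|X|^2=-\frac{2}{n}(\delta\theta)|\theta|^2 .
\]
Equivalently, take $Y=X$ in Lemma \ref{K4} and use $d\theta(X,X)=0$. Hence $\langle d(u^{2k-2}),\theta\rangle=-\frac{2k-2}{n}u^{2k-2}\delta\theta$, and
\[
\delta(u^{2k-2}\theta)=u^{2k-2}\delta\theta-\langle d(u^{2k-2}),\theta\rangle=\Big(1+\frac{2k-2}{n}\Big)u^{2k-2}\delta\theta .
\]
Your display also writes the cross term with a $+$ sign, contradicting the product rule you quote. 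Once the sign is corrected and the identity above is used, the cross term has the same favorable sign as the $(\delta\theta)^2$ term:
\[
-\frac{n-2}{n}\int_M u^{2k-2}\langle d\delta\theta,\theta\rangle=-\frac{n-2}{n}\Big(1+\frac{2k-2}{n}\Big)\int_M u^{2k-2}(\delta\theta)^2\le 0 .
\]
So this term can simply be dropped, giving $\int_M|d(u^k)|^2\le\frac{k^2}{2k-1}\lambda\int_M u^{2k}$ for every $k\ge1$ with no absorption needed. This is exactly how the paper proceeds. From there your Step 2 coincides with the paper's iteration and yields \eqref{E1}.
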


\begin{proof}
Let $f=|\theta|$. Since $\mathrm{Ric}<\la$, for $k>0$, we have
\begin{align}
\int_M|df^k|^2=&k^2\int_Mf^{2k-2}|\nabla f|^2 \nonumber\\
=&\frac{k^2}{2k-1}\int_Mf^{2k-1}\Delta_d f \nonumber\\
\leq&\frac{k^2}{2k-1}\int_M|\theta|^{2k-2}\langle \mathrm{Ric}(\theta)-\frac{n-2}{n}d\delta\theta,\theta\rangle\nonumber\\
\leq &\frac{k^2}{2k-1}\int_M\lambda f^{2k}-\frac{k^2(n-2)}{n(2k-1)}\int_M |\theta|^{2k-2}\left\langle d\delta\theta,\theta\right\rangle.\label{SobA}
\end{align}
It remains to handle the last term. Note that
$$\int_M |\theta|^{2k-2} \langle d\delta \theta, \theta \rangle=\int_M  \langle d\delta \theta, |\theta|^{2k-2}\theta \rangle = \int_M(\delta \theta)\delta (|\theta|^{2k-2} \theta),$$
where $\delta \theta$ and $\delta (|\theta|^{2k-2} \theta)$ are two functions.

Now, since $\theta$ is the dual form of the vector field $X$, we have $|\theta|^2=|X|^2$. Observe that 
\begin{align*}
\langle d|\theta|^{2k-2}, \theta \rangle&=2(k-1)|\theta|^{2k-3}\langle d|\theta|,\theta\rangle\\
&=2(k-1)|\theta|^{2k-3}\na_{X}|\theta|\\
&=2(k-1)|\theta|^{2k-3}\na_{X}|X|\\
&=(k-1)|\theta|^{2k-4}\na_{X}|X|^{2}\\
&=(k-1)|\theta|^{2k-4}\na_{X}\langle X,X\rangle\\
&=(k-1)|\theta|^{2k-4}2\langle\na_{X}X,X\rangle.\\
\end{align*}
Hence,
\begin{align*}
\delta (|\theta|^{2k-2} \theta) =& |\theta|^{2k-2} \delta \theta - \langle d|\theta|^{2k-2}, \theta \rangle\\
=& |\theta|^{2k-2} \delta \theta - (k-1)|\theta|^{2k-4} 2\langle \nabla_X X,X\rangle\\
=& \left( 1 + \frac{2k-2}{n} \right) |\theta|^{2k-2} \delta \theta,
\end{align*}
where the last equality is due to $X$ is a conformal Killing vector field and so satisfies 
$$2\langle \nabla_X X,X\rangle=\mathcal{L}_Xg(X,X)=h\langle X,X\rangle=\frac{2}{n}\mathrm{div}\, X |X|^2=-\frac{2}{n}(\delta \theta)|\theta|^2.$$
Substituting this back, we obtain
\begin{align*}
 \int_M |\theta|^{2k-2} \langle d\delta \theta, \theta \rangle=\int_M(\delta \theta)\delta (|\theta|^{2k-2} \theta) =& \left( 1 + \frac{2k-2}{n} \right) \int_M |\theta|^{2k-2} |\delta \theta|^2
 \end{align*}
 is a positive term. Thus, the last term in (\ref{SobA}) is non-positive, and we can drop it to obtain the simpler estimate
 $$\int_M|df^k|^2\leq\lambda\frac{k^2}{2k-1}\int_M f^{2k}.$$
Then the result will follow from a standard Moser iteration, for instance \cite[Theorem 9.2.7]{Pet}. For completeness, we include the details here. Substitute $f^k$ into the Sobolev inequality, we obtain 
	$$
	\|f^k\|_{\frac{2n}{n-2}} \leq \|f^k\|_{2} + C_{s}\|df^k\|_{2}\leq \left(1+ C_{s}k\left(\frac{\lambda}{2k-1}\right)^{\frac{1}{2}}\right)\|f^k\|_{2},
	$$
and so
	$$\|f\|_{\frac{2nk}{n-2}} \leq  \left(1+ C_{s}k\left(\frac{\lambda}{2k-1}\right)^{\frac{1}{2}}\right)^{\frac{1}{k}}\|f\|_{2k}.
	$$
Letting $k=\nu^i=\left(\frac{n}{n-2}\right)^i$ gives
	\begin{align*}
	\|f\|_{2\nu^{i+1}} \leq & \left[1+ C_{s}\nu^i\left(\frac{\lambda}{2\nu^i-1}\right)^{\frac{1}{2}}\right]^{\frac{1}{\nu^i}}\|f\|_{2\nu^i}\leq \left(1+ C_{s}\sqrt{\lambda\nu^i}\right)^{\frac{1}{\nu^i}}\|f\|_{2\nu^i}
	\end{align*}
	Applying Theorem \ref{B1} with
 $$ \nu = \frac{n}{n-2},\quad A(i)=\|f\|_{2\nu^i},\quad r=C_s\sqrt{\lambda},$$
 we obtain
\begin{align*}
\|\theta\|_{\infty}&=\|f\|_{\infty}=\lim_{i\to+\infty}\|f\|_{2\nu^i}\\
&\leq \exp\left(\frac{C_s\sqrt{\lambda}\sqrt{\nu}}{\sqrt{\nu}-1}\right)\|f\|_2\\
&\leq  \exp\left\{C(n)\sqrt{\lambda}C_{s}\right\} \|\theta\|_{2}.
\end{align*}
where $C(n)=\frac{\sqrt{\nu}}{\sqrt{\nu}-1}=\frac{\sqrt{n}}{\sqrt{n}-\sqrt{n-2}}$. This completes the proof. 
\end{proof}
\subsection{$L^{\infty}$ Estimate for the Gradient of Dual $1$-form}
In this subsection, we establish an $L^{\infty}$ estimate for $|\nabla\theta|$, where $\theta$ is the  $1$-form dual to  a conformal Killing vector field. This estimate is significantly more delicate than the previous one for $\theta$, as it requires controlling the second-order derivatives of $\theta$ via the curvature terms. The argument follows the Moser iteration strategy developed by Aubry \cite[Proposition 4.2]{Aub} (see also \cite{ACGR,LR}), with the key difference lying in the derivation of the integral inequality in Lemma \ref{L7} below. Unlike the approach in \cite{PS}, our estimate does not require bounds on the covariant derivative $\na R$ of the curvature tensor.
\begin{lemma}\label{L7}
Let $(M,g)$ be a closed $n$-dimensional smooth Riemannian manifold, and let $\theta$ be the $1$-form dual to a conformal Killing vector field. Set $u=|\nabla\theta|$. Then for any $k>1$, the following integral inequality holds:
\begin{align*}
\frac{1}{2}\int_M\Delta_d(u^2)u^{2k-2}+\int_M\left|\nabla^2 \theta\right|^2u^{2k-2}
\leq &9k \int_M |R\theta|^2 \cdot u^{2(k-1)} +20nk\int_M |\mathrm{Ric}(\theta)|^2 \cdot u^{2(k-1)}\\
&+2\int_M \underline{\mathrm{Ric}}^{-} u^{2k}+2\int_M |R| u^{2k}+\frac{ 3(k-1)}{4} \int_M |\nabla u|^2 u^{2(k-1)}.
\end{align*}
\end{lemma}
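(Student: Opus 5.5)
Multiply the pointwise identity \eqref{InequalityforGra} of Lemma \ref{K2} by $u^{2k-2}$ and integrate over $M$. The left-hand side is then exactly the left-hand side of the claimed inequality. Each of the four terms on the right must be bounded by the stated quantities; the terms containing derivatives of curvature are removed by integration by parts.

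\textbf{Derivative terms.} For $\int_M\langle\nabla\mathrm{Ric}(\theta),\nabla\theta\rangle u^{2k-2}$, integrate by parts to get
\[
\int_M\langle \mathrm{Ric}(\theta),\nabla^*\nabla\theta\rangle u^{2k-2}-\int_M\langle \mathrm{Ric}(\theta), \nabla_{\nabla u^{2k-2}}\theta\rangle .
\]
Here $\nabla u^{2k-2}=(2k-2)u^{2k-3}\nabla u$ and $|\nabla_{\nabla u}\theta|\le |\nabla u|\,u$, so the second piece is at most $2(k-1)\int|\mathrm{Ric}(\theta)|\,|\nabla u|\,u^{2k-2}$. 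For the first piece, use $|\nabla^*\nabla\theta|\le\sqrt n|\nabla^2\theta|$.

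The term $-\frac{n-2}{n(n-1)}\int\delta\theta\,\delta[\mathrm{Ric}(\theta)]u^{2k-2}$ is treated the same way. Moving $\delta$ onto $\delta\theta\,u^{2k-2}$ produces $d\delta\theta$, bounded by $\sqrt n|\nabla^2\theta|$, and $\delta\theta\, d u^{2k-2}$, bounded using $|\delta\theta|\le\sqrt n u$.

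For $\int\langle\nabla^*[R(\cdot,\cdot)\theta],\nabla\theta\rangle u^{2k-2}$, use that $\nabla^*=\delta^\nabla$ is the adjoint of $d^\nabla$. This gives
\[
\int\langle R(\cdot,\cdot)\theta, d^\nabla(u^{2k-2}\nabla\theta)\rangle = \int\langle R\theta, R\theta\rangle u^{2k-2}+\int \langle R\theta, du^{2k-2}\wedge\nabla\theta\rangle ,
\]
since $d^\nabla\nabla\theta=R(\cdot,\cdot)\theta$. The second integral is bounded by $C(k-1)\int|R\theta|\,|\nabla u|\,u^{2k-2}$.

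\textbf{Absorption.} Apply Young's inequality to all the cross terms so that:
\begin{itemize}
\item the $|\nabla^2\theta|^2u^{2k-2}$ contributions are absorbed into the left-hand side with coefficient at most $1$;
\item the $|\nabla u|^2u^{2k-2}$ contributions total at most $\frac{3(k-1)}{4}$;
\item the remaining pieces are $Ck\int|R\theta|^2u^{2k-2}$ and $Cnk\int|\mathrm{Ric}(\theta)|^2u^{2k-2}$.
\end{itemize}
Because $k>1$, factors $(k-1)$ and constants can both be dominated by $k$. An alternative is to absorb part of the gradient terms using Kato's inequality $|\nabla u|\le|\nabla^2\theta|$.

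\textbf{Zeroth-order term.} The term $-\langle\mathrm{Ric}(\nabla\theta),\nabla\theta\rangle$ is bounded using the frame expression \eqref{Rictheta}. Its second sum is $\ge \mathrm{Ric}^-u^2$, so its negative is at most $\underline{\mathrm{Ric}}^-u^2$. The first sum is bounded in absolute value by $|R|u^2$. Allowing a factor-$2$ slack from the norm conventions gives $2\int\underline{\mathrm{Ric}}^-u^{2k}+2\int|R|u^{2k}$.

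\textbf{Main obstacle.} The delicate step is the constant bookkeeping in the Young absorptions. The $|\nabla^2\theta|^2$ term must be fully absorbed while the gradient coefficient stays at $\frac34(k-1)$, which forces the choice of weights, such as $\epsilon=\frac{1}{4}$ for the $\nabla u$ pairings. This is what generates the specific constants $9k$ and $20nk$, and I would tune the weights to land on those values.
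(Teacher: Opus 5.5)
Your proposal is correct and follows the paper's skeleton. You multiply \eqref{InequalityforGra} by $u^{2k-2}$ and integrate by parts the two terms carrying derivatives of curvature. Your adjointness argument for $\delta^\nabla$ and $d^\nabla$ is exactly the divergence computation of Lemma~\ref{divergence1}; with the paper's convention $|R\theta|^2=\sum_{i,j}|R(e_i,e_j)\theta|^2$ the main term is $\frac12\int_M|R\theta|^2u^{2k-2}$. You then bound $-\langle\mathrm{Ric}(\nabla\theta),\nabla\theta\rangle$ pointwise via \eqref{Rictheta} and close with Young's inequality.

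The one real difference is the term $\int_M\langle\mathrm{Ric}(\theta),\nabla^*\nabla\theta\rangle u^{2k-2}$. You bound it at once by $\sqrt n\int_M|\mathrm{Ric}(\theta)|\,|\nabla^2\theta|\,u^{2k-2}$. The paper instead substitutes \eqref{mainKillForm1}, producing $\int_M|\mathrm{Ric}(\theta)|^2u^{2k-2}$ plus a $d\delta\theta$ term. It integrates the latter by parts to merge it with the other $\delta\theta\cdot\delta[\mathrm{Ric}(\theta)]$ term (total coefficient $-\frac{n-2}{n-1}$). It then integrates back, uses $|d\delta\theta|\le\sqrt n|\nabla^2\theta|$ anyway, and absorbs through the self-referential bound \eqref{deltaRic} involving $F(\theta)$. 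Your route avoids that round trip and uses the conformal Killing structure only through Lemma~\ref{K2}. The paper's detour gains nothing essential, since every resulting term is bounded in absolute value in the end.

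The step you flagged as the obstacle closes with room to spare, but state the mechanism correctly. Since $\frac12\int_M\Delta_d(u^2)u^{2k-2}=(2k-2)\int_M|\nabla u|^2u^{2k-2}\ge0$, the quantity $H=\int_M|\nabla^2\theta|^2u^{2k-2}$ is at most the full left-hand side $L$. Hence an $\epsilon H$ on the right gives $L\le\frac{1}{1-\epsilon}(\cdots)$. This division, not a norm convention, is where the factor $2$ in front of $\underline{\mathrm{Ric}}^-$ and $|R|$ comes from.

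Write $G,A,Q$ for the integrals of $|\nabla u|^2$, $|\mathrm{Ric}(\theta)|^2$, $|R\theta|^2$ against $u^{2k-2}$. Use $\frac{n-2}{\sqrt n(n-1)}\le\frac12$, which bounds the combined coefficients $(1+\frac{n-2}{n(n-1)})\sqrt n\le\frac32\sqrt n$ and $(1+\frac{n-2}{\sqrt n(n-1)})(2k-2)\le 3(k-1)$. With weight $\frac14$ on every $|\nabla^2\theta|^2$ and $|\nabla u|^2$ pairing, you get
\[
L\le\tfrac14H+\tfrac{k-1}{2}G+\Big(\tfrac{9n}{4}+9(k-1)\Big)A+\Big(\tfrac12+4(k-1)\Big)Q+\int_M\big(\underline{\mathrm{Ric}}^-+|R|\big)u^{2k}.
\]
Multiplying by $\frac43$ gives coefficients $\frac23(k-1)\le\frac34(k-1)$, $3n+12(k-1)\le20nk$, $\frac43(4k-\frac72)\le9k$ and $\frac43\le2$. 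This is within the claimed inequality.
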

	\begin{remark}
			The proof of Lemma \ref{L7} is technically involved and relies on the Bochner-type identity \eqref{InequalityforGra} established in Lemma \ref{K2}. The detailed derivation is deferred to Appendix \ref{appendixA}, where all auxiliary integral identities are provided.
	\end{remark}
With Lemma \ref{L7} at hand, we can now prove the main $L^{\infty}$ estimate for $\na\theta$. The following proposition is the central result of this subsection.

\begin{proposition}\label{P6}
Let $(M,g)$ be a closed $n$-dimensional smooth Riemannian manifold satisfying the Sobolev inequality \eqref{Sobolev32}. Assume that there exist positive constants $\kappa,\lambda, K,p$ with $p>n/2$ such that 
	\[
	-(n-1)\kappa\leq \operatorname{Ric}\leq \lambda,\quad 
	\|R\|_{2p} \leq K,\quad\text{and}\quad 
	\operatorname{diam}(M) \leq D.
	\]
Let $\theta$ be the $1$-form dual of a conformal Killing vector field. Then there exists a constant $C(n,p) > 0$ such that	
	\begin{align}
	D \|\nabla\theta\|_\infty
	&\le \max \Bigg\{
	C(n,p)(1 + \sqrt{t})^{\alpha} \sqrt{\lambda D^{2}} \|\theta\|_{2}, \nonumber\\
	&\qquad C(n,p)(1 + \sqrt{t})^{\beta} (\sqrt{\lambda D^{2}})^{\frac{\beta}{\alpha}}
	\exp\left[ C(n,p) \sqrt{\lambda} C_{s} \right] \|\theta\|_{2}
	\Bigg\},\label{E2}
	\end{align}
where 
$$\a=\frac{2pn}{2p-n},\quad\b=\frac{2pn}{2p-n+pn},\quad t = 4 C_s\sqrt{B}\sqrt{1+BD^{2}}\quad and\quad B=5n(\kappa+\lambda+K).$$
\end{proposition}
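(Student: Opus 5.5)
We run a Moser iteration for $u=|\nabla\theta|$ based on Lemma \ref{L7}, following Aubry \cite[Proposition 4.2]{Aub}. We then bound the output by $\|\theta\|_2$ using Lemma \ref{L4} and Lemma \ref{K3}.

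\textbf{Step 1: integral inequality for $u^k$.} Integrate by parts the left side of Lemma \ref{L7}:
$$\tfrac12\int_M\Delta_d(u^2)u^{2k-2}=\tfrac{2(k-1)}{1}\int_M u^{2k-2}|\nabla u|^2 .$$
By Kato, $|\nabla^2\theta|^2\ge|\nabla u|^2$. Together these give a positive multiple, roughly $(2k-1)-\tfrac34(k-1)\gtrsim k$, of $\int|\nabla u|^2u^{2k-2}$, which is comparable to $k^{-1}\int|d(u^k)|^2$. Next bound the right-hand side of Lemma \ref{L7} pointwise:
\begin{itemize}
\item $|R\theta|\le|R||\theta|$ and $|\mathrm{Ric}(\theta)|\le C(n)(\kappa+\lambda)|\theta|$;
\item $\underline{\mathrm{Ric}}^-\le(n-1)\kappa$;
\item $|\theta|\le\|\theta\|_\infty$.
\end{itemize}
The result is
$$\int_M|d(u^k)|^2\le Ck^2\int_M\bigl(|R|+\kappa+\lambda\bigr)u^{2k}+Ck^2\|\theta\|_\infty^2\int_M\bigl(|R|^2+(\kappa+\lambda)^2\bigr)u^{2k-2}.$$

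\textbf{Step 2: absorb the $L^p$ curvature and close the recursion.} Hölder with $\|R\|_{2p}\le K$ (so $\|R\|_p\le K$) gives
$$\int|R|u^{2k}\le K\|u^k\|_{2p/(p-1)}^2 .$$
Since $p>n/2$, the exponent $2p/(p-1)$ lies strictly between $2$ and $2n/(n-2)$. Interpolate it and use the Sobolev inequality \eqref{Sobolev32} to absorb the top-order part into the left side. This produces a reverse-Hölder step
$$\|u\|_{2k\nu}\le\bigl(C(1+\sqrt t)\,k\bigr)^{\gamma/k}\,\max\bigl\{\|u\|_{2k},\ \text{lower-order term}\bigr\}.$$
Here $B=5n(\kappa+\lambda+K)$ collects the curvature constants, and $t=4C_s\sqrt B\sqrt{1+BD^2}$ arises from this absorption exactly as in Aubry. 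The $u^{2k-2}$ term is handled by Young's inequality: either $u$ dominates, or it is bounded by $\|\theta\|_\infty|R|$-type quantities. This dichotomy is the source of the $\max$ in \eqref{E2}. In the second branch the $L^{2p}$ control of $|R|\,\|\theta\|_\infty$ limits the achievable exponent. Tracking powers through the iteration (Theorem \ref{B1}) yields the exponents $\alpha=\frac{2pn}{2p-n}$ and $\beta$, with the fractional power $(\sqrt{\lambda D^2})^{\beta/\alpha}$.

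\textbf{Step 3: start the iteration and convert to $\|\theta\|_2$.} Start the iteration at $k=1$. Lemma \ref{K3} gives
$$\|\nabla\theta\|_2^2\le\lambda\|\theta\|_2^2 ,$$
which supplies the factor $\sqrt{\lambda D^2}$ after multiplying by $D$. Lemma \ref{L4} gives
$$\|\theta\|_\infty\le\exp\{C(n)\sqrt\lambda C_s\}\|\theta\|_2 ,$$
which produces the exponential factor in the second branch.

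\textbf{Main obstacle.} The hardest part is Step 2. The Hölder/interpolation absorption must be done with $k$-dependence sharp enough that the iteration product converges. The inhomogeneous $\|\theta\|_\infty$ terms must also be balanced so that the exponents come out as $\alpha$ and $\beta$. I would mimic Aubry's proof line by line, treating the $\|\theta\|_\infty$ terms as a constant source term.
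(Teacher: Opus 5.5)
Your Step 1 matches the paper: integration by parts, Kato, and absorbing the $\frac{3(k-1)}{4}\int_M|\nabla u|^2u^{2k-2}$ term gives the factor $\frac{4k^2}{5k-1}$. Step 2, however, has a genuine gap, and it is where all the content of \eqref{E2} lies.

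Suppose you close the recursion the standard way: interpolate $L^{2p/(p-1)}$ between $L^2$ and $L^{2n/(n-2)}$, absorb via Sobolev, and treat $\|\theta\|_\infty^2\int_M(|R|^2+|\mathrm{Ric}|^2)u^{2k-2}$ as a constant source. Then the source propagates additively, and you end with a bound of the shape $D\|\nabla\theta\|_\infty\le C(\cdots)\sqrt{\lambda D^2}\,\|\theta\|_2+C(\cdots)\|\theta\|_\infty$. The only place $\lambda$ enters is $\|u\|_2^2\le\lambda\|\theta\|_2^2$, and $K,\kappa$ are not small, so the second term carries no power of $\lambda$. It is therefore not dominated by the second branch of \eqref{E2}, whose factor $(\sqrt{\lambda D^2})^{\beta/\alpha}$ tends to $0$ with $\lambda$; that smallness is exactly what Theorem \ref{T4} needs. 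The same objection applies to your per-step Young dichotomy: in the branch where $u$ is bounded by $\|\theta\|_\infty|R|$-type quantities, there is again no factor of $\lambda$. Your claims that $t$ ``arises from this absorption exactly as in Aubry'' and that tracking powers yields $\alpha,\beta$ are unsupported. An absorption scheme produces different powers of $C_s\sqrt B$ and gives no mechanism for $\beta$.

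The missing idea is to normalize by $\|u\|_\infty$ instead of absorbing. After H\"older, interpolate $\|u\|_{2kp/(p-1)}\le\|u\|_\infty^{1/k}\|u\|_{2(k-1)p/(p-1)}^{1-1/k}$ in both terms. Sobolev then gives, with no absorption,
\[
\|u\|_{\frac{2nk}{n-2}}^k\le\Bigl(\|u\|_\infty+2C_sk\sqrt{B\|u\|_\infty^2+B^2\|\theta\|_\infty^2}\Bigr)\|u\|_{\frac{2(k-1)p}{p-1}}^{k-1}.
\]
Dividing by $\|u\|_\infty$ gives a homogeneous recursion for the ratios $\|u\|_{a_i}/\|u\|_\infty$. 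In it, $\|\theta\|_\infty$ enters only through $r=4C_s\sqrt B\sqrt{1+B\|\theta\|_\infty^2/\|u\|_\infty^2}$. Apply Theorem \ref{B2} (not \ref{B1}) with $s=\frac{2n}{n-2}$ and $\gamma=\frac{n(p-1)}{p(n-2)}$. Together with $\|u\|_{a_0}\le\|u\|_2^{1-1/p}\|u\|_\infty^{1/p}$ and Lemma \ref{K3}, this yields $\|u\|_\infty^2\le C(n,p)(1+\sqrt r)^{2\alpha}\lambda\|\theta\|_2^2$, where $2\alpha=\frac{2s}{\gamma-1}$ is the source of $\alpha$.

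The $\max$ then comes from a case split after the iteration, not from Young's inequality inside it. If $D\|u\|_\infty\ge\|\theta\|_\infty$, then $r\le t$, which gives the first branch. Otherwise, multiply by $(\|u\|_\infty/\|\theta\|_\infty)^{\alpha}$ to get $\|u\|_\infty^{2+\alpha}\le C(1+\sqrt t)^{2\alpha}D^{-\alpha}\lambda\|\theta\|_2^2\|\theta\|_\infty^{\alpha}$. Then apply Lemma \ref{L4} and take the $(2+\alpha)$-th root. Because $\lambda$ multiplies the whole right-hand side, a positive power of it always survives. This is exactly where $\beta=\frac{2\alpha}{2+\alpha}$ and $(\lambda D^2)^{1/(2+\alpha)}=(\sqrt{\lambda D^2})^{\beta/\alpha}$ come from.
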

\begin{proof}
Let $u = |\nabla \theta|$. First, by Kato's inequality $|\na u|\leq |\na^{2}\theta|$. Consequently,
\begin{align*}
u \Delta_d u=&|\nabla\theta| \cdot \Delta_d|\nabla\theta|=\frac{1}{2}\Delta_d|\nabla\theta|^{2} + \left|\nabla|\nabla\theta|\right|^{2}\\
\leq& \frac{1}{2}\Delta_d\left(|\nabla\theta|^{2}\right)+ \left|\nabla^2\theta\right|^{2}.
\end{align*}
Hence, from Lemma \ref{L7}, we can get for any $k>1$,
	\begin{align*}
	\int_M |d(u^k)|^2 =& k^2 \int_M |\nabla u|^2 u^{2(k-1)} = \frac{k^2}{2k-1} \int_M \langle u^{2k-1}, \Delta_d u \rangle \\
	\leq& \frac{k^2}{2k-1} \int_M \left( \frac{1}{2}\Delta_d\left(|\nabla\theta|^{2}\right)+ \left|\nabla^2\theta\right|^{2} \right) u^{2(k-1)} \\
	\leq & \frac{k^2}{2k-1} \left( 2\int_M \underline{\mathrm{Ric}}^{-}  u^{2k} +2\int_M |R| u^{2k}+20nk\int_M|\mathrm{Ric}(\theta)|^2u^{2(k-1)} \right. \\
	& + \left. 9k \int_M |R\theta|^2 \cdot u^{2(k-1)} +\frac{ 3(k-1)}{4} \int_M |\nabla u|^2 u^{2(k-1)} \right).
	\end{align*}	
Rearranging the term involving  $|\na u|^{2}$ to the left-hand side, we obtain	
	\begin{align}
	\int_M |d(u^k)|^2 &\leq\frac{4k^2}{5k-1} \left(2\int_M  \underline{\mathrm{Ric}}^{-}u^{2k} +2\int_M |R| u^{2k}+ 20nk \int_M\left( |R\theta|^2+|\mathrm{Ric}(\theta)|^2\right) \cdot u^{2(k-1)}\right)\nonumber\\
	&\leq  2k \int_M \left(\underline{\mathrm{Ric}}^{-}+|R|\right)  u^{2k} + 20nk^2 \int_M\left( |R\theta|^2+|\mathrm{Ric}(\theta)|^2\right) \cdot u^{2(k-1)}.\label{E3}
	\end{align}
	Here we use the inequality $\frac{4k^{2}}{5k-1}\leq k$ for $k\geq1$.

We now estimate the right-hand side of (\ref{E3}) using Hölder's inequality. For the first term, we have
	\[ \int_M \left(\underline{\mathrm{Ric}}^{-}+|R|\right)u^{2k}\leq\left(\|\underline{\mathrm{Ric}}^{-} \|_p+\|R\|_p  \right)\|u\|^{2k}_{\frac{2kp}{p-1}}.  \]
For the second term, we apply Hölder's inequality with exponents $p$ and $\frac{p}{p-1}$,
\[\int_M\left( |R\theta|^2+|\mathrm{Ric}(\theta)|^2\right) \cdot u^{2(k-1)}\leq (\|R\|^2_{2p}+\|\mathrm{Ric}\|_{2p}^2) \|\theta\|^2_\infty \|u\|^{2(k-1)}_{\frac{2(k-1)p}{p-1}} \]	
and similarly for the Ricci term. Since $-(n-1)\kappa\leq\mathrm{Ric}\leq \la$, we have
$$
\|\underline{\mathrm{Ric}}^{-}\|_p +\|R\|_p\leq (n-1) \kappa+K\leq B, $$
and
$$20n(\|\mathrm{Ric}\|_{2p}^2+\|R\|^2_{2p})\leq 20n(\lambda^2+K^2) \leq B^2,$$	
where $B:=5n(\kappa+\lambda+K)$. Substituting these estimates into (\ref{E3}), we obtain	
\begin{equation}\label{finalforduk}
\int_M |d(u^k)|^2\leq 4k^{2}B\|u\|^{2k}_{\frac{2kp}{p-1}} + 4k^{2}B^{2}\|\theta\|^2_\infty \|u\|^{2(k-1)}_{\frac{2(k-1)p}{p-1}}.
\end{equation}
Substitute $u^k$ into the Sobolev inequality \eqref{Sobolev32}. Based on the estimate \eqref{finalforduk} for $\int_X |d(u^k)|^2$,  using the interpolation inequality
	\[
	\|u\|_{2k} \leq \|u\|_{\frac{2kp}{p-1}} \leq \|u\|^{\frac{1}{k}}_\infty \|u\|^{1-\frac{1}{k}}_{\frac{2(k-1)p}{p-1}},
	\]
 we conclude that 
	\begin{align*}
	\|u\|^k_{\frac{2nk}{n-2}} &= \|u^k\|_{\frac{2n}{n-2}} \\
	&\leq \|u^k\|_2 + C_s \|d(u^k)\|_2 \\
	&\leq \|u^k\|_2 + 2C_s k  \sqrt{ B\|u\|^{2k}_{\frac{2kp}{p-1}} +B^{2} \|\theta\|^2_\infty \|u\|^{2(k-1)}_{\frac{2(k-1)p}{p-1}} } \\
	&\leq \left( \|u\|_\infty^{\frac{1}{k}} \|u\|^{1-\frac{1}{k}}_{\frac{2(k-1)p}{p-1}} \right)^k \\
	&\quad + 2C_s k \sqrt{ B\left( \|u\|_\infty^{\frac{1}{k}} \|u\|^{1-\frac{1}{k}}_{\frac{2(k-1)p}{p-1}} \right)^{2k} + B^{2}\|\theta\|^2_\infty \|u\|^{2(k-1)}_{\frac{2(k-1)p}{p-1}} } \\
	&= \left( \|u\|_\infty + 2C_s k \sqrt{ B\|u\|^2_\infty +B^{2} \|\theta\|^2_\infty } \right) \|u\|^{k-1}_{\frac{2(k-1)p}{p-1}}.
	\end{align*}
Dividing both sides by $\|u\|_\infty$ and raising both sides to the power of $\frac{2n}{n-2}$ yields the recurrence relation:
\begin{equation}\label{mainRecur}
	\left( \frac{\|u\|_{\frac{2nk}{n-2}}}{\|u\|_\infty} \right)^{\frac{2nk}{n-2}}
	\leq \left( 1 +2C_s k\sqrt{B}  \sqrt{ 1 + \frac{B \|\theta\|^2_\infty}{\|u\|^2_\infty} } \right)^{\frac{2n}{n-2}}
	\left( \frac{\|u\|_{\frac{2(k-1)p}{p-1}}}{\|u\|_\infty} \right)^{\gamma \frac{2(k-1)p}{p-1}},
	\end{equation}
	where $\gamma = \frac{n(p-1)}{p(n-2)} > 1$.
	
	Now, define the sequence $\{a_i\}_{i\geq 0}$ by setting $a_0 = \frac{2p}{p-1}$ and imposing the recursion formula
	\[
	a_{i+1} = \gamma a_i + \frac{2n}{n-2}.
	\]
Then, solving this recurrence relation gives
	\[
	a_i = \gamma^i (a_0 + \gamma_0) - \gamma_0,
	\]
	where $\gamma_0 = \frac{2n}{(n-2)(\gamma-1)} = \frac{2pn}{2p-n}$. Let $\{k_i\}$ be the sequence of powers defined by
	\[
	k_i = \frac{p-1}{2p} a_i + 1 = \gamma^{i+1} + \gamma^i - \gamma + 1 < 2\gamma^{i+1},
	\]
	which satisfies
	\[
	\frac{2n}{n-2} k_i = \frac{n(p-1)}{p(n-2)} a_i + \frac{2n}{n-2} = a_{i+1}.
	\]
	 Substituting the power $k=k_i$ into \eqref{mainRecur}, we can rewrite it as 
	\begin{align*}
	\left( \frac{\|u\|_{a_{i+1}}}{\|u\|_\infty} \right)^{\frac{a_{i+1}}{\gamma^{i+1}}}
	&\leq \left( 1 + 2C_s k_{i}\sqrt{B} \sqrt{ 1 + \frac{B \|\theta\|^2_\infty}{\|u\|^2_\infty} } \right)^{\frac{2n}{(n-2)\gamma^{i+1}}}
	\left( \frac{\|u\|_{a_i}}{\|u\|_\infty} \right)^{\frac{a_i}{\gamma^i}}\\
	&\leq \left( 1 + 4C_s \gamma^{i+1}\sqrt{B} \sqrt{ 1 + \frac{B \|\theta\|^2_\infty}{\|u\|^2_\infty} } \right)^{\frac{2n}{(n-2)\gamma^{i+1}}}
	\left( \frac{\|u\|_{a_i}}{\|u\|_\infty} \right)^{\frac{a_i}{\gamma^i}}.
	\end{align*}
Applying Theorem \ref{B2} with
 $$ \gamma = \frac{n(p-1)}{p(n-2)},\quad A(i)=\left( \frac{\|u\|_{a_i}}{\|u\|_\infty} \right)^{\frac{a_i}{\gamma^i}},\quad s=\frac{2n}{n-2},\quad r=4C_s\sqrt{B} \sqrt{ 1 + \frac{B \|\theta\|^2_\infty}{\|u\|^2_\infty} },$$
 we obtain
	\begin{align}
	1 &= \lim_{i\to\infty} \left( \frac{\|u\|_{a_{i+1}}}{\|u\|_\infty} \right)^{\frac{a_{i+1}}{\gamma^{i+1}}} \nonumber\\
	&\leq \exp\left\{ \frac{4n\sqrt{\gamma}}{(n-2)(\gamma-1)} \right\} \left(1 + \sqrt{4C_s}B^{\frac{1}{4}}\left(1 + \frac{B \|\theta\|^2_\infty}{\|u\|^2_\infty}\right)^{\frac{1}{4}}\right)^{\frac{4n}{(n-2)(\gamma-1)}} \left( \frac{\|u\|_{a_0}}{\|u\|_\infty} \right)^{a_0}.\label{E4}
\end{align}
Now observe that
$$\frac{4n}{(n-2)(\gamma-1)}=2\gamma_0=\frac{4pn}{2p-n}.$$
Also, from Lemma  \ref{K3},  the $L^2$-norm of $u=|\na\theta|$ satisfies
\begin{equation}\label{E5}
\|u\|^2_2=\|\nabla\theta\|^2_2\leq \lambda \|\theta\|_2^2.
\end{equation}
Furthermore, by interpolation,
	$$\|u\|_{a_0} \leq \|u\|^{1-\frac{1}{p}}_2 \|u\|^{\frac{1}{p}}_\infty.$$ 
Substituting this and (\ref{E5}) into (\ref{E4}), we finally deduce that
	\begin{equation}\label{finalinMoser}
\|u\|^2_\infty \leq \exp\left\{ \frac{4n\sqrt{\gamma}}{(n-2)(\gamma-1)} \right\} \left(1 + \sqrt{4C_s}B^{\frac{1}{4}}\left(1 + \frac{B \|\theta\|^2_\infty}{\|u\|^2_\infty}\right)^{\frac{1}{4}}\right)^{\frac{4pn}{2p-n}} \lambda\|\theta\|^2_2.
\end{equation} 
We now convert  \eqref{finalinMoser} into the desired estimate (\ref{E2}). Let
$$t = 4 C_s\sqrt{B}\sqrt{1+BD^{2}},\quad C(n,p)=\exp\left\{ \frac{4n\sqrt{\gamma}}{(n-2)(\gamma-1)} \right\}.$$
We distinguish two cases.\\	
\textbf{Case 1:} $D^{2}\|u\|^{2}_\infty \geq \|\theta\|^{2}_\infty$. Then $\frac{\|\theta\|_\infty^2}{\|u\|_\infty^2} \le D^2$. Substituting this into \eqref{finalinMoser} directly gives:
$$	\|u\|^2_\infty 
\leq C(n,p) (1 + \sqrt{t})^{\frac{4pn}{2p-n}} \lambda \|\theta\|^2_2.
$$
\noindent\textbf{Case 2:} $D^{2}\|u\|^{2}_\infty \leq \|\theta\|^{2}_\infty$. Then $\frac{\|u\|_\infty^2}{\|\theta\|_\infty^2} \le \frac{1}{D^2}$. Multiplying both sides of \eqref{finalinMoser} by $\|u\|_\infty^{\frac{2pn}{2p-n}}\|\theta\|_\infty^{-\frac{2pn}{2p-n}}$ and substituting the estimate for $\|\theta\|_\infty$ from Lemma \ref{L4} yields:
	\begin{align*}
	\|u\|^2_\infty \|u\|_\infty^{\frac{2pn}{2p-n}} 
	&\leq C(n,p)\left( \sqrt{\frac{\|u\|_\infty}{\|\theta\|_\infty}} + \sqrt{4C_s }B^{\frac{1}{4}} \left(\frac{\|u\|^2_\infty}{\|\theta\|^2_\infty} + B \right)^{\frac{1}{4}}\right)^{\frac{4pn}{2p-n}} \lambda \|\theta\|^2_2 \|\theta\|^{\frac{2pn}{2p-n}}_\infty \\
	&\leq C(n,p) \left(\frac{1}{\sqrt{D}}+ \sqrt{4 C_s} B^{\frac{1}{4}}\left(\frac{1}{D^2}+B\right)^{\frac{1}{4}}  \right)^{\frac{4pn}{2p-n}}\lambda \|\theta\|^2_2 \|\theta\|^{\frac{2pn}{2p-n}}_\infty \\
	&= C(n,p) (1 + \sqrt{t})^{\frac{4pn}{2p-n}} D^{-\frac{2pn}{2p-n}}\lambda \|\theta\|^2_2 \|\theta\|^{\frac{2pn}{2p-n}}_\infty\\
	&\leq C(n,p) (1 + \sqrt{t})^{\frac{4pn}{2p-n}} D^{-\frac{2pn}{2p-n}}\lambda \exp\left[C(n)\frac{2pn}{2p-n}\sqrt{\la}C_{s} \right] \|\theta\|^{2+\frac{2pn}{2p-n} }_2.
	\end{align*}
Combining the above two cases,  we arrive at the estimate \eqref{E2}. Here, the constant $C(n,p) > 0$ has been appropriately redefined to a slightly larger universal constant. This completes the proof.
\end{proof}

\subsection{Harnack Inequality for the Dual $1$-form}
 In this subsection, we establish a Harnack-type inequality for the  $1$-form $\theta$ dual to  a conformal Killing vector field. This inequality is the key analytic tool that allows us to compare the infimum and supremum of $|\theta|^{2}$, thereby proving that any nontrivial conformal Killing vector field is nowhere vanishing under the curvature assumptions of Theorem \ref{T2}.
	
The argument is a modification of the approach in \cite[Section 3.3]{HW}, adapted to the present setting of $1$-forms dual to conformal Killing vector fields. We first recall a classical mean value theorem for smooth functions on closed Riemannian manifolds.
\begin{theorem}\label{T6}(cf.\cite{Heb})
	Let $(M,g)$ be a connected, closed Riemannian manifold with diameter $D$, and let $f:M\rightarrow\mathbb{R}$ be a smooth function. Then for any $p,q\in M$,
	\begin{equation}\label{meanvalue}
	|f(p)-f(q)| \leq D\|\nabla f\|_{\infty}.
	\end{equation}
\end{theorem}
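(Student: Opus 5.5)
The plan is to join $p$ and $q$ by a minimizing geodesic and integrate the derivative of $f$ along it. Since $M$ is closed and connected, the Hopf--Rinow theorem gives a unit-speed minimizing geodesic $\gamma:[0,L]\to M$ with $\gamma(0)=q$, $\gamma(L)=p$ and $L=d(p,q)\le D$.

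Next, compose $f$ with $\gamma$. The function $\phi=f\circ\gamma$ is smooth on $[0,L]$, and its derivative is $\phi'(s)=\langle \nabla f(\gamma(s)),\gamma'(s)\rangle$. By Cauchy--Schwarz and $|\gamma'(s)|=1$, this gives $|\phi'(s)|\le |\nabla f(\gamma(s))|\le \|\nabla f\|_\infty$.

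Finally, apply the fundamental theorem of calculus:
$$|f(p)-f(q)|=\Big|\int_0^L\phi'(s)\,ds\Big|\le L\,\|\nabla f\|_\infty\le D\|\nabla f\|_\infty.$$

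None of these steps is a genuine obstacle. The one point to watch is that $\|\nabla f\|_\infty$ must be read as the supremum norm. The normalized $L^p$ norms used earlier in the paper do not alter this, because the $L^\infty$ norm is unaffected by volume normalization. If $f$ were only Lipschitz, for example $f=|\theta|^2$ near zeros, the same argument would still go through using absolute continuity along $\gamma$. In the intended application, however, $f$ is smooth.
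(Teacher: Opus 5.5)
Your argument is correct, and it is exactly the standard minimizing-geodesic proof that the paper does not write out but attributes to Hebey. One small slip in your aside: $|\theta|^2$ is smooth, whereas the function the paper actually feeds into this inequality in Theorem~\ref{T4} is $f=|\theta|$. That function is only Lipschitz near zeros of $\theta$, even though the paper calls it smooth. So your absolute-continuity variant is precisely what is needed there: $s\mapsto|\theta(\gamma(s))|$ is Lipschitz, with derivative bounded by $|\nabla\theta|(\gamma(s))$ wherever it exists.
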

We now introduce the constant that will appear in the Harnack inequality. To simplify notation, define
\begin{equation}\label{E10}
I=C(n,p) (1 + \sqrt{t})^{\a} \sqrt{\lambda D^{2}},\quad II=C(n,p) (1 + \sqrt{t})^{\b} (\sqrt{\lambda D^{2}})^{\frac{\b}{\a} } e^{\left[C(n,p)\sqrt{\la}C_{s}\right]}.
\end{equation}
and set
$$\varepsilon(n,p,\kappa,\lambda,K,D)=\max\{I,II\}.$$ 
This constant  $\varepsilon(n,p,\kappa,\lambda,K,D)$ is precisely the quantity (up to the factor $D$) on the right-hand side of Proposition \ref{P6}, which asserts that
\begin{equation}\label{E6}
D\|\na\theta\|_{\infty}\leq \varepsilon(n,p,\kappa,\lambda,K,D)\|\theta\|_{2}.
\end{equation}
With these preparations, we can now prove the main result of this subsection.

\begin{theorem}\label{T4}
Let $(M,g)$ be a closed $n$-dimensional smooth Riemannian manifold satisfying the Sobolev inequality \eqref{Sobolev32}. Assume the same curvature conditions as in Proposition \ref{P6}. Suppose that $\theta$ is the $1$-form dual to a conformal Killing vector field, normalized by $\|\theta\|_2=1$. Then the following Harnack inequality holds:
	\begin{equation}\label{Harnack111}
	\frac{\inf|\theta|}{\sup|\theta|} \geq 1 -\varepsilon(n,p,\kappa,\lambda,K,D).
	\end{equation}
\end{theorem}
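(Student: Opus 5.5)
We apply the mean value theorem, Theorem \ref{T6}, to the function $f=|\theta|$ and control its gradient by Proposition \ref{P6}. The function $|\theta|$ is only Lipschitz at zeros of $\theta$, so we replace it by the smooth approximation $f_\epsilon=\sqrt{|\theta|^2+\epsilon^2}$. For this function, Kato's inequality gives
\[
|\nabla f_\epsilon|=\frac{|\langle\nabla\theta,\theta\rangle|}{\sqrt{|\theta|^2+\epsilon^2}}\leq|\nabla\theta|
\]
pointwise. Choose points $p,q\in M$ where $|\theta|$ attains its infimum and supremum. Theorem \ref{T6} together with \eqref{E6} then yields
\[
f_\epsilon(q)-f_\epsilon(p)\leq D\|\nabla f_\epsilon\|_\infty\leq D\|\nabla\theta\|_\infty\leq \varepsilon(n,p,\kappa,\lambda,K,D)\|\theta\|_2=\varepsilon .
\]
Letting $\epsilon\to0$ gives $\sup|\theta|-\inf|\theta|\leq\varepsilon$.

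Next we convert this additive bound into the multiplicative one in \eqref{Harnack111}. With the normalization in the normalized $L^2$ norm,
\[
1=\|\theta\|_2^2=\frac{1}{\mathrm{Vol}(g)}\int_M|\theta|^2\leq(\sup|\theta|)^2,
\]
so $\sup|\theta|\geq1$. Dividing the additive bound by $\sup|\theta|$ gives
\[
\frac{\inf|\theta|}{\sup|\theta|}\geq 1-\frac{\varepsilon}{\sup|\theta|}\geq 1-\varepsilon,
\]
which is \eqref{Harnack111}. The argument is insensitive to whether $\varepsilon<1$; when $\varepsilon\geq1$ the inequality is trivially true.

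The analytic work is contained in Proposition \ref{P6}, which we take as given. Only two points need care. The first is the regularization of $|\theta|$ near its zeros, handled by $f_\epsilon$ as above or, alternatively, by applying the mean value inequality along a minimizing geodesic to the Lipschitz function $|\theta|$. The second is using the normalized $L^2$ norm, so that $\|\theta\|_2=1$ really does force $\sup|\theta|\geq1$.
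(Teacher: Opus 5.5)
Your proposal is correct and follows essentially the same route as the paper: the mean value inequality (Theorem~\ref{T6}) applied to $|\theta|$ via Kato's inequality, the gradient bound \eqref{E6} from Proposition~\ref{P6}, and $\sup|\theta|\geq\|\theta\|_2=1$ to turn the additive bound into the multiplicative one. Your regularization $f_\epsilon=\sqrt{|\theta|^2+\epsilon^2}$ is a useful addition, since the paper simply asserts that $|\theta|$ is smooth, which can fail at zeros of $\theta$.
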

\begin{proof}
	Let $f = |\theta|$. Then $f$ is a smooth nonnegative function on $M$. Since $\nabla f = \nabla|\theta|$, and Kato’s inequality gives $|\nabla|\theta|| \leq |\nabla\theta|$, it follows that 
	\[
	|\nabla f| \leq |\nabla\theta|.
	\]
Applying the mean value inequality	(Theorem \ref{T6}) to the function $f$, we obtain 
\begin{equation}\label{meanvalueuse111}
	\sup f - \inf f \leq D\|\nabla\theta\|_{\infty} .
	\end{equation}
Now, since $\|\theta\|_{2}=1$, we have 
	\begin{equation}\label{meanvalueuse222}
\sup f\geq\|\theta\|_{2}=1
	\end{equation}
Combining \eqref{meanvalueuse111} and \eqref{meanvalueuse222}, and applying Proposition \ref{P6}, we finally deduce
	 \begin{align*}
	\inf f \geq &\sup f\cdot\left(1-D\|\nabla\theta\|_{\infty} \right)\\
	\geq&\sup f\cdot\left[1-\varepsilon(n,p,\kappa,\lambda,K,D)\right],
	\end{align*}
	which completes the proof.
\end{proof}

\section{ Proof of the Main Results}
In this final section, we combine the Harnack inequality established in Theorem \ref{T4} with an estimate for the Sobolev constant to prove the main rigidity theorem (Theorem \ref{T2}).

We begin by recalling a Sobolev–Poincaré inequality with an explicit dependence on the curvature and diameter bounds. This result is standard; we refer to \cite{Ber,Che} for proofs, and to \cite[Lemma 5.6]{Che} and \cite[Proposition 2.4]{HW} for detailed discussions of the Sobolev constant.
	\begin{proposition}\label{P1}
	Let $(M,g)$ be a closed $n$-dimensional smooth Riemannian manifold satisfying
		\[
		\operatorname{Ric} \geq -(n-1)\kappa \quad \text{and} \quad \operatorname{diam}(M) \leq D.
		\]
Then for every $f \in H^1(M)$, the following Sobolev inequality holds:
		\[
		\|f\|_{\frac{2n}{n-2}} \leq \|f\|_2 + C(n) D e^{(n-1)\sqrt{\kappa D^{2}}}  \|df\|_2.
		\]
 where $C(n)$ is a positive constant depends only on $n$.
	\end{proposition}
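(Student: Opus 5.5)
The statement to prove is Proposition \ref{P1}, the Sobolev inequality with constant $C(n)De^{(n-1)\sqrt{\kappa D^2}}$. The plan is to reduce it to a normalized-volume Sobolev inequality on manifolds with Ricci bounded below and diameter bounded above, and to trace the constants through Gallot's isoperimetric estimate. Since both the claimed inequality and the hypotheses are scale invariant (the norms are normalized by volume, and $D\|df\|_2$ and $\kappa D^2$ are invariant under $g\mapsto c^2 g$), we first rescale so that $D=1$. The curvature assumption then reads $\mathrm{Ric}\geq -(n-1)\kappa D^2$.

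\textbf{Step 1: isoperimetric bound.} We bound the normalized isoperimetric constant from below. By Gallot's theorem, or equivalently the Bishop--Gromov argument behind it as in Bérard's book \cite{Ber}, there is an isoperimetric profile estimate
\[
\frac{|\partial \Omega|}{\mathrm{Vol}(M)} \ge c(n)\, e^{-(n-1)\sqrt{\kappa D^2}} \min\left\{\frac{|\Omega|}{\mathrm{Vol}(M)}, 1-\frac{|\Omega|}{\mathrm{Vol}(M)}\right\}^{\frac{n-1}{n}}.
\]
This comes from comparing volumes of tubes and geodesic balls with the model space of curvature $-\kappa D^2$ on a ball of radius $1$. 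The volume ratio in the model is bounded by $\int_0^1 \big(\sinh(\sqrt{\kappa}Dr)/(\sqrt{\kappa}D)\big)^{n-1}dr$, which is at most $C(n)e^{(n-1)\sqrt{\kappa D^2}}$. This step gives the exponential factor.

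\textbf{Step 2: from isoperimetry to Sobolev.} Next we apply the Federer--Fleming coarea argument to the positive and negative parts of $f-m$, where $m$ is a median of $f$. This gives the $L^1$ Sobolev inequality $\|f-m\|_{n/(n-1)}\le C\|df\|_1$. Applying it to $|f-m|^{2(n-1)/(n-2)}$ and using Hölder in the usual way gives
\[
\|f-m\|_{\frac{2n}{n-2}} \le C(n)e^{(n-1)\sqrt{\kappa D^2}}\|df\|_2 .
\]

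\textbf{Step 3: removing the median.} The median is removed in two moves. First, $|m|\le \|f\|_2\cdot 2^{1/2}$ from the definition of a median. Second, $\|f-\bar f\|_2$ is controlled by the same constant through the Poincaré inequality. A cleaner route is to apply the whole argument to $|f|$ and use the elementary inequality $\|f\|_{q}\le \|f\|_2+\|f-\bar f\|_q$ after adjusting constants. The one subtlety is that the coefficient of $\|f\|_2$ must be exactly $1$. To get it, we take $q=2n/(n-2)$ and use Bakry's trick: the inequality $\|f\|_q^2\le \|f\|_2^2+C\|df\|_2^2$ follows from the mean-zero version via a Rothaus-type lemma, $\|f\|_q^2\le \|\bar f\|^2+\|f-\bar f\|_q^2$ for $q\ge2$ up to a constant on the second term. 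Taking square roots then gives $\|f\|_q\le\|f\|_2+C\|df\|_2$.

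The main obstacle is the bookkeeping in Step 1: the exponential's exponent must come out as exactly $(n-1)\sqrt{\kappa D^2}$, with no extra polynomial factors in $\kappa D^2$ beyond what $C(n)$ absorbs. Here we would use $\sinh x\le e^x/2$ and $x^{-1}\sinh x\le e^x$, which absorb the polynomial factors. Finally, undoing the rescaling reinstates the factor $D$ in front of $\|df\|_2$. For $n=2$ the exponent $2n/(n-2)$ is read as any large finite exponent, and the same argument applies.
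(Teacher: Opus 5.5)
The paper does not prove Proposition~\ref{P1}; it quotes the inequality from \cite{Ber,Che} (see also \cite{HW}). The standard derivation behind those references is the pipeline you describe: Gallot's lower bound for the isoperimetric profile, then the coarea (Federer--Fleming/Maz'ya) argument, then passing from a median to the mean. Your outline therefore follows the cited route, and its conclusion is correct. However, the step you call the crux is explained incorrectly, and Steps~2--3 need small repairs.

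\textbf{Step 1.} Write $a=\sqrt{\kappa}D$, with $D=1$ after rescaling. The factor $e^{(n-1)a}$ does not come from the model ball-volume integral $\int_0^1(\sinh(ar)/a)^{n-1}\,dr$. Gallot's estimate is not a comparison of geodesic balls; it is a Heintze--Karcher comparison for tubes around an isoperimetric hypersurface.

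Let $H=\partial\Omega$ with $\mathrm{Vol}(\Omega)=\beta\,\mathrm{Vol}(M)$ and $\beta\le\frac12$. Off a closed singular set of Hausdorff dimension at most $n-8$, $H$ has constant mean curvature $\eta$, taken as an average with respect to the normal pointing into $\Omega$. Almost every point of $M$ is reached from the regular part of $H$ by a minimizing normal geodesic of length at most $1$. Hence
\[
\mathrm{Vol}(\Omega)\le |H|\int_0^1\Bigl(\cosh at-\tfrac{\eta}{a}\sinh at\Bigr)_+^{n-1}dt,\qquad
\mathrm{Vol}(M\setminus\Omega)\le |H|\int_0^1\Bigl(\cosh at+\tfrac{\eta}{a}\sinh at\Bigr)_+^{n-1}dt .
\]
There are two cases.

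\emph{Case $\eta\le\max\{1,a\}$.} The second integrand is at most $2^{n-1}e^{(n-1)at}$. Since $1-\beta\ge\frac12$, this gives $|H|\ge 2^{-n}e^{-(n-1)a}\,\mathrm{Vol}(M)$.

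\emph{Case $\eta\ge\max\{1,a\}$.} The first integral is at most $C(n)/\eta$: bound the integrand by $e^{-(n-1)at}$ when $\eta\le 2a$, and use convexity of $\cosh at-\frac{\eta}{a}\sinh at$ up to its focal time when $\eta\ge 2a$. The second integral is at most $(1+\eta)^{n-1}e^{(n-1)a}$. Balancing the two bounds gives $|H|\ge c(n)\beta^{(n-1)/n}e^{-(n-1)a/n}\,\mathrm{Vol}(M)$.

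Together these yield your displayed profile bound. The exponent comes from $\int_0^1 e^{(n-1)at}\,dt$ on the non-focusing side of $H$. The $\sinh$-integral you wrote plays no role, and there are no polynomial factors to absorb.

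\textbf{Steps 2--3.} In Step~2, apply the $L^1$ inequality to $\bigl((f-m)_\pm\bigr)^{s}$ with $s=\frac{2(n-1)}{n-2}$, since each of these vanishes on at least half of $M$. Applying it to $|f-m|^s$ directly does not work, because $0$ need not be a median of that function.

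In Step~3, drop the $\sqrt2$ bound on $|m|$; the Rothaus/Bakry lemma is also unnecessary. That lemma, $\|f\|_q^2\le\bar f^2+(q-1)\|f-\bar f\|_q^2$, is true but only buys the stronger squared form. With $q=\frac{2n}{n-2}$ and the normalized measure, $|\bar f-m|\le\|f-m\|_1\le\|f-m\|_q$, so $\|f-\bar f\|_q\le 2\|f-m\|_q$. Then $\|f\|_q\le|\bar f|+\|f-\bar f\|_q\le\|f\|_2+\|f-\bar f\|_q$. So the coefficient $1$ in front of $\|f\|_2$ is automatic.
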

Proposition \ref{P1} provides an explicit form for the Sobolev constant $C_{s}$ in (\ref{Sobolev32}):
\[C_{s}=C(n) D e^{(n-1)\sqrt{\kappa D^{2}}}. \]
We are now in a position to prove our main theorem.
\begin{proof}[\textbf{Proof of Theorem \ref{T2}}]
Suppose that $X\not\equiv0$ is  a nontrivial conformal Killing vector field on $M$. Let $\theta=X^\flat$ be its dual $1$-form. Without loss of generality, we may normalize so that $\|\theta\|_2=1$.
	
We begin with the Harnack inequality (\ref{Harnack111}) from Theorem \ref{T4}. It remains to show that, for the choice of  $\la$ in Theorem \ref{T2}, both $I$ and $II$ (defined in (\ref{E10})) are bounded above by $\frac{1}{4}$.

Using the Sobolev inequality with explicit constant from Proposition \ref{P1}, we recall 
\begin{align}
t= 4 C_s\sqrt{B}\sqrt{1+BD^{2}}\leq 4C(n) e^{(n-1)\sqrt{\kappa D^{2}}} \sqrt{BD^2}\sqrt{1+BD^2}.\label{estimatefortRou}
\end{align}
Since $\sqrt{\lambda D^2}\leq e^{-(n-1)\sqrt{\kappa D^{2}}}$, there exists a positive constant $C_1(n,p)$ such that 
$$e^{\left[C(n,p)\sqrt{\la}C_{s}\right]}\leq  e^{C(n,p)C(n)}:=C_1(n,p).$$
Moreover, for  $B=5n(\kappa+\lambda+K)$,
\begin{align*}
e^{(n-1)\sqrt{\kappa D^{2}}}(1+BD^2)=&e^{(n-1)\sqrt{\kappa D^{2}}}\left[1+5n(\kappa+\lambda+K)D^2\right]\\
\leq&e^{(n-1)\sqrt{\kappa D^{2}}}\left[1+5n(\kappa D^2+e^{-2(n-1)\sqrt{\kappa D^{2}}}+KD^2)\right]\\
\leq&20ne^{2(n-1)\sqrt{\kappa D^{2}}}\left(1+KD^2\right).
\end{align*}
Hence, applying \eqref{estimatefortRou} and setting $C_{2}(n,p):=1+2\sqrt{20nC(n)}$, we have
\begin{align*}
	1+\sqrt{t}
	&\leq 1+\left[4C(n) e^{(n-1)\sqrt{\kappa D^{2}}} (1+BD^2)\right]^{\frac{1}{2}}\\
	&\leq 1+2\sqrt{20nC(n)}e^{(n-1)\sqrt{\kappa D^{2}}}(1+KD^2)^{\frac{1}{2}}\\
	&\leq C_2(n,p)e^{(n-1)\sqrt{\kappa D^{2}}}\left(1+\sqrt{K}D\right).
	\end{align*}	
Take
$$\tilde{C}(n,p) = \min \left\{ \frac{1}{[4C(n,p)]^{1/\alpha}C_2(n,p)}, \; \frac{1}{\left[ 4C(n,p) C_1(n,p) \right]^{1/\beta} C_2(n,p)} \right\}$$
and 
note that
$$\sqrt{\lambda D^2}= \min\left\{ 
		 \left(\frac{\tilde{C}(n,p)}{1 + \sqrt{K}D }e^{-(n-1)\sqrt{\kappa D^2}} \right)^{\frac{2pn}{2p-n}},  
\; e^{-(n-1)\sqrt{\kappa D^2}}
		\right\}.$$ 
Then we can estimate the terms $I$ and $II$ as follows:
\begin{itemize}
  \item For $I$, substituting the bounds for $(1+\sqrt{t})$ and $\sqrt{\lambda D^2}$:
  \begin{align*}
  I \leq& C(n,p) \cdot \left(C_2(n,p)e^{(n-1)\sqrt{\kappa D^{2}}}(1+\sqrt{K}D)\right)^\alpha \cdot  \left(\frac{\tilde{C}(n,p)}{1 + \sqrt{K}D }e^{-(n-1)\sqrt{\kappa }D} \right)^{\frac{2pn}{2p-n}}\\
  \leq& C(n,p) C_2(n,p)^\alpha \left( \frac{1}{4C(n,p)C_2(n,p)^\alpha} \right) = \frac{1}{4}.
  \end{align*}
  where the last inequality we use $\tilde{C}(n,p)\leq  \frac{1}{[4C(n,p)]^{1/\alpha}C_2(n,p)}$ and $\a=\frac{2pn}{2p-n}$.
  \item For $II$, substituting the bounds for $(1+\sqrt{t})$, $\sqrt{\lambda D^2}$ and $e^{\left[C(n,p)\sqrt{\la}C_{s}\right]}$:
\begin{align*}
II\leq &C(n,p) C_1(n,p) \left(C_2(n,p)e^{(n-1)\sqrt{\kappa D^{2}}}(1+\sqrt{K}D)\right)^\beta\cdot  \left(\frac{\tilde{C}(n,p)}{1 + \sqrt{K}D }e^{-(n-1)\sqrt{\kappa }D} \right)^{\frac{2pn}{2p-n}\cdot\frac{\beta}{\alpha}}\\
\leq&C(n,p) C_1(n,p)C_2(n,p)^\beta\cdot \left\{\frac{1}{\left[ 4C(n,p) C_1(n,p) \right]^{1/\beta} C_2(n,p)}\right\}^{\beta}=\frac{1}{4}.
\end{align*}
  where the last inequality we use $\tilde{C}(n,p)\leq \frac{1}{\left[ 4C(n,p) C_1(n,p) \right]^{1/\beta} C_2(n,p)}$ and $\a=\frac{2pn}{2p-n}$.
\end{itemize}
Combining the estimates for $I$ and $II$, we finally obtain $$\varepsilon(n,p,\kappa,\lambda,K,D)=\max\{I,II\}\leq \frac{1}{4},$$
which together with \eqref{Harnack111} yields
	\begin{equation*}
		\frac{\inf|\theta|}{\sup|\theta|} \geq 1 -\varepsilon(n,p,\kappa,\lambda,K,D)\geq \frac{3}{4}.
	\end{equation*} 
This Harnack inequality implies that if $X\not\equiv0$, then $X$ is nowhere vanishing.
\end{proof}
\begin{proof}[\textbf{Proof of Theorem \ref{T3}}]
This follows immediately from Theorem \ref{T2} together with the Poincaré–Hopf theorem. Indeed, if $M$ is a closed even-dimensional manifold with nonvanishing Euler characteristic $\chi(M)\neq0$, then the Poincaré–Hopf theorem implies that every vector field on $M$ must have at least one zero. By Theorem \ref{T2}, any conformal Killing vector field on $M$ is either identically zero or nowhere vanishing. Since a nowhere vanishing vector field cannot exist when $\chi(M)\neq 0$, we conclude that every conformal Killing vector field on $M$ must vanish identically.
\end{proof} 
\begin{proof}[\textbf{Proof of Corollary \ref{C1}}]
From Theorem \ref{T3}, we conclude that the Lie algebra of $\operatorname{Con}(M,g)$ is trivial, which implies that $\operatorname{Con}(M,g)$ is a discrete group.  
		
By the theorems of Lelong-Ferrand \cite{Lelong} and Obata \cite{Obata}, $\operatorname{Con}(M,g)$ is compact unless the manifold $M$ is conformally equivalent to the standard round sphere $S^{2n}$. However, $\operatorname{Con}(S^{2n},g_{round})$ admits a nontrivial Lie algebra, so this exceptional case cannot occur under the assumptions of Theorem \ref{T2}. Therefore, $\operatorname{Con}(M,g)$ is a compact discrete group, and hence finite.
\end{proof}
\begin{proof}[\textbf{Proof of Corollary \ref{C3}}]
Fix a point  $p\in M$, and consider the stabilizer subgroup
	$$\operatorname{Con}_p = \{x \in \operatorname{Con}(M,g) \mid x \cdot p = p\}$$
This is a closed Lie subgroup of $\operatorname{Con}(M,g)$. By the infinitesimal action, each element in the Lie algebra of $\operatorname{Con}_p$ corresponds to a conformal Killing vector field that vanishes at $p$. 
	
By Theorem \ref{T2}, any such vector field must vanish identically on $M$. Hence, the Lie algebra of $\operatorname{Con}_p$ is trivial, which implies that the stabilizer subgroup $\operatorname{Con}_p$ is discrete. Since $\operatorname{Con}(M,g)$ is compact (by Corollary \ref{C1}), every discrete subgroup is finite. Thus, the action of $\operatorname{Con}(M,g)$ on $M$ is locally free.
\end{proof}
\begin{proof}[\textbf{Proof of Corollary \ref{C2}}]
This follows from Bott's result \cite{Bot} (see also Corollary \ref{C4}), which states that if a closed oriented Riemannian manifold admits a nowhere vanishing Killing vector field, then all its Pontryagin numbers vanish. Under the topological assumptions of Corollary \ref{C2} (either $\chi(M)\neq 0$ in even dimensions, or at least one Pontryagin number is nonzero in dimensions divisible by 4), a nowhere vanishing Killing vector field cannot exist. By Theorem \ref{T2}, every Killing vector field must be identically zero. Hence, the isometry group $\operatorname{Iso}(M,g)$ has a trivial Lie algebra and is therefore finite. 
\end{proof}
\appendix

\section{Pontryagin Numbers and Bott's Theorem}\label{appendix B}
We first recall some basic facts about Pontryagin numbers. Let $M$ be a closed, oriented smooth manifold of dimension $4m$. The real tangent bundle $TM$ admits Pontryagin classes
\[
p_k(TM) := (-1)^k c_{2k}(TM \otimes_{\mathbb R} \mathbb C) \in H^{4k}(M; \mathbb Z), \qquad k = 1, \dots, m.
\]
For any sequence of non-negative integers $(i_1, \dots, i_r)$ such that 
$$i_1 + \cdots + i_r = m,$$
the corresponding \emph{Pontryagin number} is defined by
\[
p_{i_1 i_2 \cdots i_r}(M) := \big\langle p_{i_1}(TM) \cup p_{i_2}(TM) \cup \cdots \cup p_{i_r}(TM), [M] \big\rangle \in \mathbb Z,
\]
where $[M]$ denotes the fundamental class of $M$. These numbers are diffeomorphism invariants and, via the Chern--Weil theory, can be expressed as integrals of polynomials in the curvature forms of any Riemannian metric on $M$.	For example:
\begin{itemize}
	\item[(1)] By the Hirzebruch signature theorem, the signature $\sigma(M)$ of a $4m$-dimensional manifold is a linear combination of its Pontryagin numbers:
	\[
	\sigma(M) = \langle L(TM), [M] \rangle,
	\]
	where $L(TM)$ is the Hirzebruch $L$-genus, a polynomial in the Pontryagin classes. 
	\item[(2)] The $\hat{A}$-genus $\hat{A}(M)$ is also a polynomial in the Pontryagin classes. By the Lichnerowicz theorem, if a closed spin manifold admits a metric of positive scalar curvature, then its $\hat{A}$-genus vanishes.
\end{itemize}
We now recall a classical result of Bott \cite{Bot} that relates Pontryagin numbers to the existence of nowhere vanishing Killing vector fields. In the Riemannian setting, Bott proved the following theorem:
\begin{theorem}(\cite[Theorem 2]{Bot})
	Let $(M,g)$ be a closed, oriented even-dimensional Riemannian manifold, and let $X$ be a non-degenerate Killing vector field on $M$. Then for every polynomial $\Phi$ in the Chern classes of the complexified tangent bundle of weight at most $\dim M / 2$, one has
	\[
	\sum_{P \in \operatorname{Zero}(X)} \frac{\Phi(\mathbf L_P)}{\det^{1/2}(\mathbf L_P)} = \Phi(M),
	\]
	where $P$ ranges over the zeros of $X$, $\mathbf L_P$ denotes the linearization of $X$ at $P$, $\det^{1/2}(\mathbf L_P)$ is the square root determined by the orientation, and $\Phi(M)$ is the characteristic number of $M$ defined by $\Phi$.
\end{theorem}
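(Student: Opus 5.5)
The statement is Bott's residue formula (\cite[Theorem 2]{Bot}). I would prove it by equivariant Chern--Weil theory and Berline--Vergne localization, for a Killing field $X$ on $M$ of dimension $2m$.

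\textbf{Step 1: equivariantly closed forms.} Work with the operator $d_X = d - \iota_X$ acting on the $X$-invariant forms. Since $\mathcal L_X=d\iota_X+\iota_X d$, we have $d_X^2=-\mathcal L_X=0$ there.

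The skew endomorphism $\mathbf L_X := -\nabla X$ of $TM$ satisfies the Kostant identity $\nabla(\mathbf L_X)=\iota_X R$. This holds because $X$ is Killing, via $\nabla^2_{Y,Z}X=R(Y,X)Z$.

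Set $R_X := R + \mathbf L_X$, the equivariant curvature. Then $d^\nabla R_X = \iota_X R$, so $R_X$ satisfies the equivariant Bianchi identity. Consequently, for any invariant polynomial $\Phi$ in the Chern classes, the form $\Phi(R_X)$ (suitably normalized by $\tfrac{i}{2\pi}$) is $d_X$-closed.

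\textbf{Step 2: the top-degree component.} The component of $\Phi(R_X)$ of degree $2m$ is the ordinary Chern--Weil form $\Phi(R)$ when $\Phi$ has weight exactly $m$. Hence $\int_M \Phi(R_X)=\Phi(M)$. When the weight is less than $m$ the integral is $0$, matching the statement, where $\Phi(M)=0$ in that case.

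\textbf{Step 3: localization.} Let $\theta=X^\flat$. Then $d_X\theta = d\theta - |X|^2$, and $\theta$ is $X$-invariant because $X$ is Killing. On $M\setminus \mathrm{Zero}(X)$ the form $d_X\theta$ has nonzero degree-$0$ part, so it is invertible:
$$(d_X\theta)^{-1} = -|X|^{-2}\sum_{j\geq 0}\left(d\theta/|X|^2\right)^j.$$
Therefore on that set $\Phi(R_X)=d_X\big(\theta\,(d_X\theta)^{-1}\Phi(R_X)\big)$.

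Non-degeneracy means $\mathbf L_P$ is invertible at each zero, so the zeros are isolated. Remove small balls $B_\epsilon(P)$ around them and apply Stokes. This gives
$$\int_M\Phi(R_X)=-\lim_{\epsilon\to0}\sum_P\int_{\partial B_\epsilon(P)}\theta\,(d_X\theta)^{-1}\Phi(R_X).$$

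\textbf{Step 4: evaluating the boundary limits.} This is the main obstacle. Near $P$, in normal coordinates, $X\approx \mathbf L_P x$, with $\mathbf L_P$ skew and nondegenerate. In the limit $\Phi(R_X)$ contributes $\Phi(\mathbf L_P)$, and only the term with $j=m-1$ survives. The remaining sphere integral is then a model computation on $\mathbb R^{2m}$ with a linear skew field.

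I would do that computation by block-diagonalizing $\mathbf L_P$ into $2\times2$ rotations with angles $\lambda_1,\dots,\lambda_m$. The model form becomes the equivariant Thom-form computation, with Gaussian integral $\int e^{d_X\theta}$. It yields $(-2\pi)^m/\prod\lambda_i$, that is $\det^{1/2}(\mathbf L_P)^{-1}$, with the sign fixed by comparing the orientation of the chosen basis with that of $M$. The delicate part is tracking the normalizing constants $(\tfrac{i}{2\pi})$ and the Pfaffian sign so that they match the convention in the statement. After this, summing over $P$ gives the formula.
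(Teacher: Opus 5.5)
The paper gives no proof of this statement. It quotes Bott's theorem as a black box, and the only consequence it uses is the empty-sum case $\mathrm{Zero}(X)=\emptyset$ (Corollary \ref{C4}, which feeds Corollary \ref{C2}). You instead prove it, using the later and now standard Berline--Vergne equivariant localization, and your outline is sound:
\begin{itemize}
\item Kostant's identity $\nabla\mathbf L_X=\iota_X R$ gives the equivariant Bianchi identity for $R+\mathbf L_X$.
\item Invariance of $\theta=X^\flat$ gives $d_X(d_X\theta)=-\mathcal L_X\theta=0$, so $\theta(d_X\theta)^{-1}\Phi(R_X)$ is a $d_X$-primitive off the zeros.
\item The scaling $|\theta|=|X|\sim\epsilon$ on $\partial B_\epsilon(P)$ leaves only the $j=m-1$ term, paired with the degree-$0$ part of $\Phi(R_X)$.
\item The model sphere integral reduces to the Gaussian $\int_{\mathbb R^{2m}}e^{d_X\theta}$.
\end{itemize}

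Besides being self-contained, your route shows the paper needs much less than Bott's theorem. If $X$ has no zeros, $\theta(d_X\theta)^{-1}\Phi(R_X)$ is a global primitive on the closed manifold. So your Steps 1--3, with no balls removed, already give $\Phi(M)=0$, and no residue computation is needed.

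One bookkeeping point in Step 4. With $\mathbf L_X=-\nabla X$ from Step 1, $\Phi(R_X)$ restricts at $P$ to $\Phi(-\nabla X(P))$, while $X\approx\nabla X(P)\,x$. So $\Phi(R_X)|_P=\Phi(\mathbf L_P)$ and $X\approx\mathbf L_P x$ cannot both hold. This is harmless for the present statement:
\begin{itemize}
\item Polynomials in the Chern classes of $TM\otimes\mathbb C$ are even on skew matrices, so $\Phi(-A)=\Phi(A)$.
\item The Pfaffian changes by $(-1)^m$, which matters only for odd $m$, where both sides vanish.
\end{itemize}
What genuinely has to be matched is the factor $(\pm2\pi)^m$ from the Gaussian against the $(i/2\pi)^m$ in the Chern--Weil normalization. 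That is, you must fix the precise meaning of $\Phi(\mathbf L_P)$ and $\det^{1/2}(\mathbf L_P)$ in the statement.
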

In particular, if $X$ is \emph{nowhere vanishing}, then the left-hand side is an empty sum and hence $\Phi(M) = 0$ for all such $\Phi$. Therefore, we obtain the following fundamental corollary:
\begin{corollary}[\cite{Bot}]\label{C4}
If a closed Riemannian manifold $(M,g)$ admits a nowhere vanishing Killing vector field, then all Pontryagin numbers of $M$ vanish.
\end{corollary}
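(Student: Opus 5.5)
The plan is to read Corollary~\ref{C4} off directly from Bott's residue formula (\cite[Theorem 2]{Bot}) recalled just above. Pontryagin numbers only make sense for a closed oriented manifold of dimension $4m$, so I first reduce to that case. If $\dim M\not\equiv 0 \pmod 4$ there are no Pontryagin numbers and nothing is to be proved. If $M$ is non-orientable, the Pontryagin numbers are not defined in $\mathbb{Z}$ (equivalently, they are taken to be zero by convention). If one wants to work on the orientation double cover $\pi:\widetilde M\to M$ instead, I will note three facts: $\pi^*g$ is a Riemannian metric, the lift $\widetilde X$ of $X$ is a Killing field for $\pi^*g$, and $\widetilde X$ is still nowhere vanishing because $\pi$ is a local isometry. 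So from now on $M$ is closed, oriented, and of dimension $4m$; in particular it is even-dimensional, as Bott's theorem requires.

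Next I check the hypotheses of the theorem. Non-degeneracy of $X$ is a condition imposed only at the zeros of $X$ (the linearization $\mathbf L_P$ should be invertible there). Since $X$ has no zeros, $X$ is vacuously non-degenerate, so the theorem applies.

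Then I match Pontryagin numbers with the characteristic numbers $\Phi(M)$ appearing in the theorem. By definition $p_k(TM)=(-1)^k c_{2k}(TM\otimes_{\mathbb R}\mathbb C)$. Hence for $i_1+\cdots+i_r=m$,
\[
p_{i_1\cdots i_r}(M)=\Phi(M),\qquad \Phi:=(-1)^{i_1+\cdots+i_r}\,c_{2i_1}\cdots c_{2i_r},
\]
where $\Phi$ is a polynomial in the Chern classes of the complexified tangent bundle. Its weight is $2(i_1+\cdots+i_r)=2m=\dim M/2$, which lies within the allowed range. The zero set $\operatorname{Zero}(X)$ is empty, so the left-hand side of Bott's formula is an empty sum, and the formula gives $\Phi(M)=0$. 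Since every Pontryagin number arises as such a $\Phi(M)$, all Pontryagin numbers vanish.

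The only step needing care is the bookkeeping: the orientation reduction, and confirming that non-degeneracy is vacuous rather than an extra condition on the nonzero flow. Should one prefer to avoid the full residue formula, the fallback is Bott's vanishing argument. A nowhere vanishing Killing field generates a Riemannian foliation of codimension $q=n-1$. Using a basic (Bott) connection adapted to this foliation, Chern--Weil forms of degree greater than $2q$ in the normal Pontryagin classes vanish. One would then need to relate $p(TM)$ to $p$ of the normal bundle via $TM=\mathbb{R}X\oplus\nu$, which holds because the line bundle $\mathbb{R}X$ is trivial. However, this route only kills classes of degree greater than $2(n-1)$, which is not enough in top degree $n$. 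For that reason the residue formula above is the route I commit to.
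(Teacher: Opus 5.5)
Your proposal is correct and is the paper's own argument: apply Bott's residue formula with $\operatorname{Zero}(X)=\emptyset$, so the left-hand side is an empty sum and $\Phi(M)=0$ for every $\Phi$ of weight $\dim M/2$, in particular for every Pontryagin number. Your orientation reduction and the observation that non-degeneracy is vacuous only make explicit what the paper leaves implicit.

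One correction to your closing aside: the foliation route you discarded does in fact work. For a \emph{Riemannian} foliation the transverse Levi-Civita curvature is basic, and Pasternack's sharpening of Bott vanishing then kills normal Pontryagin forms in all degrees $>q=n-1$, which includes the top degree $n$. Since $TM\cong\mathbb{R}X\oplus\nu$, this gives a second proof of the corollary.
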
 

\section{Proof of the Integral Inequality for $\na\theta$}\label{appendixA}
In this appendix, we provide the detailed proof of Lemma \ref{L7}, which is the key integral inequality used in the Moser iteration for $|\na\theta|$. 

We first establish two divergence identities that will be used repeatedly in the proof.  Set $u:=|\na\theta|$. Since the computations are pointwise, It suffices to work at a fixed point a point $p\in M$ and choose a local orthonormal frame  $\{e_i\}_{i=1}^n$  in a neighborhood of $p$ such that
$$\left.\nabla_{e_i}e_j\right|_p=0\quad \forall 1\leq i,j\leq n.$$

\begin{lemma}\label{divergence1}
Suppose that $Y_1\in\mathfrak{X}(M)$ is a vector field on $M$ defined by
$$Y_1:=\sum_{i=1}^nu^{2k-2}\langle R(e_i,\cdot)\theta,\nabla_{e_i}\theta\rangle^{ \flat}.$$
Then the divergence of  $Y_{1}$ is 
\begin{equation}\label{diver1}
\mathrm{div}\,Y_1=(2k-2)u^{2k-3}\langle R(\cdot,\nabla u)\theta,\nabla \theta\rangle+u^{2k-2}\langle \nabla^*[R(\cdot,\cdot)\theta],\nabla \theta\rangle-\frac{1}{2}u^{2k-2}|R\theta|^2.
\end{equation}
\end{lemma}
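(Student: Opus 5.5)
The plan is to compute $\mathrm{div}\,Y_1$ directly at the fixed point $p$ in the normal frame $\{e_i\}$, where $\nabla_{e_i}e_j|_p=0$. Identifying $Y_1$ with the $1$-form
$$Y_1(Z)=u^{2k-2}\sum_{i=1}^n\langle R(e_i,Z)\theta,\nabla_{e_i}\theta\rangle,$$
note that this expression is independent of the choice of orthonormal frame. At $p$ we then have $\mathrm{div}\,Y_1=\sum_{j}(\nabla_{e_j}Y_1)(e_j)=\sum_j e_j\big(Y_1(e_j)\big)$, because all Christoffel-type terms vanish at $p$. Expanding $e_j\big(u^{2k-2}\sum_i\langle R(e_i,e_j)\theta,\nabla_{e_i}\theta\rangle\big)$ by the Leibniz rule produces three groups of terms, and these correspond to the three terms of \eqref{diver1}.

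\emph{Step 1 (derivative of the weight).} Differentiating $u^{2k-2}$ gives $(2k-2)u^{2k-3}e_j(u)$. By linearity of $R(e_i,\cdot)$, summing over $j$ yields
$$(2k-2)u^{2k-3}\sum_i\langle R(e_i,\nabla u)\theta,\nabla_{e_i}\theta\rangle=(2k-2)u^{2k-3}\langle R(\cdot,\nabla u)\theta,\nabla\theta\rangle,$$
which is the first term of \eqref{diver1}.

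\emph{Step 2 (derivative of the curvature factor).} The term $u^{2k-2}\sum_{i,j}\langle\nabla_{e_j}[R(e_i,e_j)\theta],\nabla_{e_i}\theta\rangle$ is handled with the coordinate expression \eqref{Rtheta}. At $p$ it gives
$$\nabla^*[R(\cdot,\cdot)\theta](e_i)=-\sum_j\nabla_{e_j}[R(e_j,e_i)\theta]=\sum_j\nabla_{e_j}[R(e_i,e_j)\theta],$$
using the skew-symmetry of $R$ and the vanishing of $\nabla_{e_j}e_i$ at $p$. Pairing with $\nabla_{e_i}\theta$ and summing over $i$ gives $u^{2k-2}\langle\nabla^*[R(\cdot,\cdot)\theta],\nabla\theta\rangle$, the second term.

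\emph{Step 3 (derivative of $\nabla\theta$).} The remaining term is $u^{2k-2}\sum_{i,j}\langle R(e_i,e_j)\theta,\nabla^2_{e_j,e_i}\theta\rangle$, since $\nabla_{e_j}\nabla_{e_i}\theta=\nabla^2_{e_j,e_i}\theta$ at $p$. Because $R(e_i,e_j)\theta$ is skew in $(i,j)$, only the skew part of the Hessian survives. That skew part is
$$\tfrac12\big(\nabla^2_{e_j,e_i}\theta-\nabla^2_{e_i,e_j}\theta\big)=-\tfrac12R(e_i,e_j)\theta.$$
Hence this term equals $-\frac12u^{2k-2}\sum_{i,j}|R(e_i,e_j)\theta|^2=-\frac12u^{2k-2}|R\theta|^2$, the third term.

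The main obstacle is not analytic but a matter of bookkeeping of conventions. Three conventions have to be consistent with the rest of the paper. First, the sign convention $R(X,Y)=\nabla^2_{X,Y}-\nabla^2_{Y,X}$. Second, the sign in the definition \eqref{defofdeltanabla} of $\delta^\nabla=\nabla^*$. Third, the normalization $|R\theta|^2=\sum_{i,j}|R(e_i,e_j)\theta|^2$, summed over all ordered pairs, which is the normalization that gives the factor $\frac12$. I would fix these explicitly at the start and then check the signs in Steps 2 and 3 against them. The identity is tensorial, so verifying it at the arbitrary point $p$ proves it everywhere.
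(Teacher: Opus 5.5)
Your proposal is correct and follows essentially the same route as the paper. The paper also applies the Leibniz rule to $u^{2k-2}$, turns $\sum_{i,j}\langle\nabla_{e_j}[R(e_i,e_j)\theta],\nabla_{e_i}\theta\rangle$ into $\langle\nabla^*[R(\cdot,\cdot)\theta],\nabla\theta\rangle$ by skew-symmetry in the normal frame, and antisymmetrizes the second-derivative term to obtain $-\tfrac12|R\theta|^2$ under the convention $|R\theta|^2=\sum_{i,j}|R(e_i,e_j)\theta|^2$. As in the paper, the pointwise identity should be read at points where $u>0$, since $u^{2k-3}$ may be singular at zeros of $\nabla\theta$ when $k<3/2$.
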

\begin{proof}
Denote $\tilde{Y}_1=\sum_{i=1}^n\limits\langle R(e_i,\cdot)\theta,\nabla_{e_i}\theta\rangle^{\flat}$, so that $Y_1= u^{2k-2}\tilde{Y}_1$.  For any vector field $Z\in\mathfrak{X}(M)$, we have
$$\langle \tilde{Y}_1,Z\rangle=\langle R(e_i,Z)\theta,\nabla_{e_i}\theta\rangle.$$
Then, using the product rule for divergence,
$$
\begin{aligned}
\mathrm{div}\,Y_1=&\sum_{j=1}^n\left\langle \nabla_{e_j} \left(u^{2k-2}\tilde{Y}_1\right),e_j\right\rangle\\
=&\sum_{j=1}^n(2k-2)u^{2k-3}\left\langle \tilde{Y}_{1}\na_{e_{j}}u,e_j\right\rangle+\sum_{j=1}^nu^{2k-2}\left\langle \nabla_{e_j} \tilde{Y}_1,e_j\right\rangle \\
=&\sum_{j=1}^n(2k-2)u^{2k-3}\left\langle \tilde{Y}_1,(\na_{e_{j}}u)e_j\right\rangle +\sum_{j=1}^nu^{2k-2}\left\langle \nabla_{e_j} \tilde{Y}_1,e_j\right\rangle\\
=&(2k-2)u^{2k-3}\left\langle \tilde{Y}_1,\nabla u\right\rangle +\sum_{j=1}^nu^{2k-2}\nabla_{e_j}\left\langle  \tilde{Y}_1,e_j\right\rangle
\end{aligned}
$$
Then the lemma follow from the relation
$$
\begin{aligned}
(2k-2)u^{2k-3}\left\langle \tilde{Y}_1,\nabla u\right\rangle=&(2k-2)u^{2k-3}\sum_{i=1}^n\langle R(e_i,\nabla u)\theta,\nabla_{e_i}\theta\rangle\\
=&(2k-2)u^{2k-3}\langle R(\cdot,\nabla u)\theta,\nabla \theta\rangle
\end{aligned}
$$
and
\begin{align*}
\sum_{j=1}^nu^{2k-2}\nabla_{e_j}\left\langle  \tilde{Y}_1,e_j\right\rangle=&\sum_{i,j=1}^nu^{2k-2}\nabla_{e_j}\langle R(e_i,e_j)\theta,\nabla_{e_i}\theta\rangle\\
=&\sum_{i=1}^nu^{2k-2}\langle \nabla_{e_j}[R(e_i,e_j)\theta],\nabla_{e_i}\theta\rangle+\sum_{i,j=1}^nu^{2k-2}\langle R(e_i,e_j)\theta,\nabla_{e_j}\nabla_{e_i}\theta\rangle\\
=& -\sum_{i=1}^nu^{2k-2}\langle \nabla_{e_j}[R(e_j,e_i)\theta],\nabla_{e_i}\theta\rangle \\
&+\frac{1}{2}\left(\sum_{i,j=1}^nu^{2k-2}\langle R(e_i,e_j)\theta,\nabla_{e_j}\nabla_{e_i}\theta\rangle+ \sum_{i,j=1}^nu^{2k-2}\langle R(e_j,e_i)\theta,\nabla_{e_i}\nabla_{e_j}\theta\rangle \right) \\
=&u^{2k-2}\langle \nabla^*[R(\cdot,\cdot)\theta],\nabla \theta\rangle+\frac{1}{2}\sum_{i,j=1}^{n}u^{2k-2}\langle R(e_i,e_j)\theta,(\nabla_{e_j}\nabla_{e_i}-\nabla_{e_i}\nabla_{e_j})\theta\rangle  \\
=&u^{2k-2}\langle \nabla^*[R(\cdot,\cdot)\theta],\nabla \theta\rangle+\frac{1}{2}\sum_{i,j=1}^{n}u^{2k-2}\langle R(e_i,e_j)\theta,R(e_j,e_i)\theta\rangle  \\
=&u^{2k-2}\langle \nabla^*[R(\cdot,\cdot)\theta],\nabla \theta\rangle-u^{2k-2}\frac{1}{2}|R\theta|^2.
\end{align*}
Here we have used the curvature definition $R(e_i,e_j)\theta=\na_{e_i}\na_{e_j}\theta-\na_{e_j}\na_{e_i}\theta$, and the identity  $|R\theta|^2=\sum_{i,j=1}^n\limits\langle R(e_i,e_j)\theta,R(e_i,e_j)\theta\rangle$. 
\end{proof}
\begin{lemma}\label{divergence2}
Suppose that $Y_2\in\mathfrak{X}(M)$ is a vector field on $M$ defined by
$$Y_2:=u^{2k-2}\left\langle \mathrm{Ric}(\theta),\nabla_{(\cdot)}\theta\right\rangle^{\flat}$$
Then the divergence of $Y_2$ is
\begin{equation}\label{diver2}
\mathrm{div}\,Y_2=u^{2k-2}\langle \nabla\mathrm{Ric}(\theta),\nabla \theta\rangle-u^{2k-2}\langle \mathrm{Ric}(\theta),\nabla^*\nabla \theta\rangle+(2k-2)u^{2k-3}\left\langle \mathrm{Ric}(\theta),\nabla_{\nabla u} \theta\right\rangle.
\end{equation} 
\end{lemma}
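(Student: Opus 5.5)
The plan is to mirror the proof of Lemma \ref{divergence1}: write $Y_2=u^{2k-2}\tilde Y_2$ with
\[
\tilde Y_2:=\left\langle \mathrm{Ric}(\theta),\nabla_{(\cdot)}\theta\right\rangle^{\flat},
\qquad\text{i.e.}\qquad
\langle \tilde Y_2,Z\rangle=\langle \mathrm{Ric}(\theta),\nabla_Z\theta\rangle .
\]
All computations are done at the fixed point $p$ in the frame $\{e_i\}$ with $\nabla_{e_i}e_j|_p=0$. The product rule for divergence gives
\[
\mathrm{div}\,Y_2=(2k-2)u^{2k-3}\langle \tilde Y_2,\nabla u\rangle+u^{2k-2}\,\mathrm{div}\,\tilde Y_2 .
\]
By the defining relation of $\tilde Y_2$, the first term is exactly $(2k-2)u^{2k-3}\langle \mathrm{Ric}(\theta),\nabla_{\nabla u}\theta\rangle$, which is the last term of \eqref{diver2}.

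For the second term, at $p$ we have
\[
\mathrm{div}\,\tilde Y_2=\sum_{j=1}^n e_j\langle \tilde Y_2,e_j\rangle=\sum_{j=1}^n e_j\langle \mathrm{Ric}(\theta),\nabla_{e_j}\theta\rangle .
\]
Expanding by metric compatibility gives
\[
\sum_{j=1}^n\langle \nabla_{e_j}\mathrm{Ric}(\theta),\nabla_{e_j}\theta\rangle+\sum_{j=1}^n\langle \mathrm{Ric}(\theta),\nabla_{e_j}\nabla_{e_j}\theta\rangle .
\]
The first sum is $\langle\nabla\mathrm{Ric}(\theta),\nabla\theta\rangle$, where $\mathrm{Ric}(\theta)$ is viewed as a $1$-form. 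For the second sum, since $\nabla_{e_j}e_j|_p=0$ we have $\nabla_{e_j}\nabla_{e_j}\theta=\nabla^2_{e_j,e_j}\theta$ at $p$. Summing over $j$ then gives $-\nabla^*\nabla\theta$, so the second sum equals $-\langle \mathrm{Ric}(\theta),\nabla^*\nabla\theta\rangle$. Multiplying by $u^{2k-2}$ and adding the first term yields \eqref{diver2}.

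The only point needing care is the sign convention for $\nabla^*\nabla=-\sum_i\nabla^2_{e_i,e_i}$, which fixes the minus sign in front of $\langle\mathrm{Ric}(\theta),\nabla^*\nabla\theta\rangle$. The other point is that $u^{2k-3}$ appears only in the combination $u^{2k-3}\nabla u=\tfrac{1}{2k-2}\nabla(u^{2k-2})$. Where $u=0$ this identity should be interpreted in that form, or one may work with $u_\epsilon=\sqrt{u^2+\epsilon}$ and let $\epsilon\to0$, exactly as implicitly done in Lemma \ref{divergence1}. No curvature commutation is needed here, unlike in Lemma \ref{divergence1}, so this is the easier of the two identities.
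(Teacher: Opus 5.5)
Your proposal is correct and follows the paper's proof essentially line for line: the product rule splits off $(2k-2)u^{2k-3}\langle \mathrm{Ric}(\theta),\nabla_{\nabla u}\theta\rangle$, and metric compatibility in a frame with $\nabla_{e_i}e_j|_p=0$ turns $\sum_j e_j\langle \mathrm{Ric}(\theta),\nabla_{e_j}\theta\rangle$ into $\langle\nabla\mathrm{Ric}(\theta),\nabla\theta\rangle-\langle\mathrm{Ric}(\theta),\nabla^*\nabla\theta\rangle$. Your remark about reading $u^{2k-3}\nabla u$ as $\frac{1}{2k-2}\nabla(u^{2k-2})$ (or regularizing with $\sqrt{u^2+\epsilon}$) where $u=0$ addresses a point the paper passes over silently, and it is a sensible addition.
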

\begin{proof}
Denote $\tilde{Y}_2=\langle \mathrm{Ric}(\theta),\nabla_{(\cdot)}\theta\rangle^{\flat}$, so that $Y_2= u^{2k-2}\tilde{Y}_2$. For any vector field $Z\in \mathfrak{X}(M)$, we have
$$\langle \tilde{Y}_2,Z\rangle=\langle  \mathrm{Ric}(\theta),\nabla_{Z}\theta\rangle.$$
Then, similar to Lemma \ref{diver1},
$$
\begin{aligned}
\mathrm{div}\,Y_2
=&(2k-2)u^{2k-3}\left\langle \tilde{Y}_2,\nabla u\right\rangle +\sum_{j=1}^nu^{2k-2}\nabla_{e_j}\left\langle  \tilde{Y}_2,e_j\right\rangle
\end{aligned}
$$
Then the lemma follow from the relation
$$
\begin{aligned}
(2k-2)u^{2k-3}\left\langle \tilde{Y}_2,\nabla u\right\rangle
=(2k-2)u^{2k-3}\left\langle \mathrm{Ric}(\theta),\nabla_{\nabla u} \theta\right\rangle
\end{aligned}
$$
and
\begin{align*}
\sum_{j=1}^nu^{2k-2}\nabla_{e_j}\left\langle  \tilde{Y}_2,e_j\right\rangle=&\sum_{j=1}^nu^{2k-2}\nabla_{e_j}\left\langle \mathrm{Ric}(\theta),\nabla_{e_j}\theta\right\rangle\\
=&\sum_{j=1}^nu^{2k-2}\langle \nabla_{e_j}[\mathrm{Ric}(\theta)],\nabla_{e_j}\theta\rangle+\sum_{j=1}^nu^{2k-2}\langle \mathrm{Ric}(\theta),\nabla_{e_j}\nabla_{e_j}\theta\rangle\\
=&u^{2k-2}\langle \nabla\mathrm{Ric}(\theta),\nabla \theta\rangle-u^{2k-2}\langle \mathrm{Ric}(\theta),\nabla^*\nabla \theta\rangle.
\end{align*}
\end{proof}
Integrating the divergence identities \eqref{diver1} over $M$ yields the following integral identities.
\begin{proposition}\label{diverintegral1}
Let $(M,g)$ be a closed $n$-dimensional smooth Riemannian manifold and $u=|\nabla \theta|$. Then for $k>1$, the following integral inequality holds:
$$\int_M \langle \nabla^* \left[R(\cdot,\cdot)\theta\right], u^{2(k-1)}\nabla\theta \rangle =-(2k-2) \int_M u^{2k-3}\langle R(\cdot,\nabla u)\theta,\nabla \theta\rangle+\frac{1}{2}\int_M u^{2k-2}|R\theta|^2.$$
\end{proposition}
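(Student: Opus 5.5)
The plan is to integrate the pointwise identity \eqref{diver1} of Lemma \ref{divergence1} over the closed manifold $M$ and apply the divergence theorem. Since $M$ has no boundary, $\int_M \mathrm{div}\,Y_1 = 0$ for the globally defined vector field $Y_1$.

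The first step is to check that $Y_1$ is genuinely a global smooth (or at least Lipschitz) vector field, independent of the chosen frame. The expression $\sum_i \langle R(e_i,Z)\theta,\nabla_{e_i}\theta\rangle$ is a full contraction over $i$, so it does not depend on the orthonormal frame. The factor $u^{2k-2}$ with $k>1$ is continuous, and it is $C^1$ wherever $u>0$. Where $u$ vanishes, the term $u^{2k-3}\nabla u$ has to be handled. For $k\ge 3/2$ it is fine directly. For $1<k<3/2$ I would regularize: replace $u$ by $u_\epsilon=\sqrt{u^2+\epsilon}$, run the same computation with $u_\epsilon^{2k-2}$, integrate, and let $\epsilon\to0$ by dominated convergence. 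The integrand $u_\epsilon^{2k-3}|\nabla u_\epsilon|\,|\nabla\theta|$ is bounded by $u_\epsilon^{2k-2}|\nabla^2\theta|$, since $|\nabla u_\epsilon|\le u|\nabla u|/u_\epsilon\le |\nabla^2\theta|$ by Kato's inequality. This bound is integrable, so the limit passage is justified. I expect this regularity point to be the only real obstacle, and it is mild.

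The second step is to integrate \eqref{diver1}:
\[
0=(2k-2)\int_M u^{2k-3}\langle R(\cdot,\nabla u)\theta,\nabla\theta\rangle+\int_M u^{2k-2}\langle\nabla^*[R(\cdot,\cdot)\theta],\nabla\theta\rangle-\frac12\int_M u^{2k-2}|R\theta|^2 .
\]
Solving for the middle term gives exactly the claimed identity, once we note that $\langle \nabla^*[R(\cdot,\cdot)\theta], u^{2(k-1)}\nabla\theta\rangle = u^{2k-2}\langle\nabla^*[R(\cdot,\cdot)\theta],\nabla\theta\rangle$ pointwise. Sign conventions for $\nabla^*$ and for $|R\theta|^2$ are inherited directly from Lemma \ref{divergence1}, so no further bookkeeping is needed.
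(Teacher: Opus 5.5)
Your proposal is correct and matches the paper's argument: the paper obtains this identity by integrating the pointwise divergence identity \eqref{diver1} of Lemma \ref{divergence1} over the closed manifold $M$ and rearranging terms. Your extra regularization with $u_\epsilon=\sqrt{u^2+\epsilon}$ for $1<k<3/2$ is valid. It covers the case where $u^{2k-2}$ is not Lipschitz at zeros of $\nabla\theta$, a point the paper passes over without comment.
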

%
%
From the  the divergence identities \eqref{diver1}, we then have
\begin{proposition}\label{diverintegral2}
Let $(M,g)$ be a closed $n$-dimensional smooth Riemannian manifold and $u=|\nabla \theta|$. Then for $k>1$, the following integral inequality holds:
\begin{align*}
\int_M \left\langle\nabla\mathrm{Ric}(\theta),\nabla \theta\right\rangle u^{2k-2}
=&\int_M \left|\mathrm{Ric}(\theta)\right|^2 u^{2k-2}+(2k-2)\cdot\frac{n-2}{n}\int_M \left\langle\mathrm{Ric}(\theta),du\right\rangle\delta \theta\cdot u^{2k-3}\\
&-\frac{n-2}{n}\int_M \delta \theta\cdot\delta\left[\mathrm{Ric}(\theta)\right] u^{2k-2}-(2k-2)\int_M \left\langle\mathrm{Ric}(\theta),\nabla_{\nabla u} \theta\right\rangle u^{2k-3}.
\end{align*}
\end{proposition}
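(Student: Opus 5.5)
The plan is to integrate the divergence identity \eqref{diver2} of Lemma \ref{divergence2} over the closed manifold $M$, then eliminate $\nabla^*\nabla\theta$ using the conformal Killing equation \eqref{mainKillForm1}, and finally integrate by parts once more to move the exterior derivative off $\delta\theta$.

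\textbf{Step 1 (divergence theorem).} Since $\int_M \mathrm{div}\,Y_2=0$ on the closed manifold $M$, \eqref{diver2} gives
\begin{align*}
\int_M u^{2k-2}\langle \nabla\mathrm{Ric}(\theta),\nabla\theta\rangle
=\int_M u^{2k-2}\langle \mathrm{Ric}(\theta),\nabla^*\nabla\theta\rangle-(2k-2)\int_M u^{2k-3}\langle \mathrm{Ric}(\theta),\nabla_{\nabla u}\theta\rangle .
\end{align*}
The last term is already the final term of the claimed identity.

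\textbf{Step 2 (use the conformal Killing equation).} Substituting $\nabla^*\nabla\theta=\mathrm{Ric}(\theta)-\frac{n-2}{n}d\delta\theta$ from \eqref{mainKillForm1} turns the first term on the right into
$$\int_M|\mathrm{Ric}(\theta)|^2u^{2k-2}-\frac{n-2}{n}\int_M\langle u^{2k-2}\mathrm{Ric}(\theta),d\delta\theta\rangle .$$

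\textbf{Step 3 (integrate by parts).} Using that $\delta$ is the $L^2$-adjoint of $d$, together with the product rule $\delta(f\omega)=f\,\delta\omega-\langle df,\omega\rangle$ and $d(u^{2k-2})=(2k-2)u^{2k-3}du$, I obtain
$$\int_M\langle u^{2k-2}\mathrm{Ric}(\theta),d\delta\theta\rangle=\int_M \delta\theta\cdot\delta[\mathrm{Ric}(\theta)]\,u^{2k-2}-(2k-2)\int_M\langle\mathrm{Ric}(\theta),du\rangle\,\delta\theta\, u^{2k-3}.$$
Multiplying by $-\frac{n-2}{n}$ produces exactly the second and third terms of the claimed identity, with the stated signs. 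Collecting the contributions of Steps 1–3 then yields the proposition.

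\textbf{Main obstacle (regularity).} The only delicate point is that $u=|\nabla\theta|$ need not be smooth where $\nabla\theta=0$. Consequently the weight $u^{2k-2}=(u^2)^{k-1}$, for $k>1$, is merely Lipschitz/Hölder and not necessarily $C^1$. To handle this, I would run Steps 1–3 with the smooth weight $(u^2+\epsilon)^{k-1}$ in place of $u^{2k-2}$, for which every manipulation is justified. Its gradient is
$$(k-1)(u^2+\epsilon)^{k-2}\nabla(u^2)=2(k-1)(u^2+\epsilon)^{k-2}u\nabla u,$$
which is bounded in absolute value by $2(k-1)\,u^{2k-3}|\nabla u|\le 2(k-1)\,u^{2k-3}|\nabla^2\theta|$. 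This bound is integrable for $k>1$ and independent of $\epsilon$, so dominated convergence lets me pass to the limit $\epsilon\to0$ and recover the identity with weight $u^{2k-2}$, where $u^{2k-3}\nabla u$ is interpreted as $\frac{1}{2k-2}\nabla(u^{2k-2})$. Beyond this, I would only need to double-check the sign conventions for $\delta$ and $\nabla^*$, consistent with \eqref{defofdeltanabla}.
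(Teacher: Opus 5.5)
Your Steps 1--3 are exactly the paper's argument: integrate \eqref{diver2} over $M$, substitute \eqref{mainKillForm1}, and move $d$ off $\delta\theta$ using $\delta[u^{2k-2}\mathrm{Ric}(\theta)]=u^{2k-2}\delta[\mathrm{Ric}(\theta)]-(2k-2)u^{2k-3}\langle\mathrm{Ric}(\theta),du\rangle$; your signs all check out.

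The $\epsilon$-regularization is a justification the paper omits, and as stated your dominating function is off: $(u^2+\epsilon)^{k-2}\le u^{2k-4}$ holds only for $k\le 2$, and $u^{2k-3}$ is a negative power for $k<3/2$. The limiting argument still goes through for every $k>1$. Use the extra factor of $u$ in $|\nabla_{\nabla(u^2)}\theta|\le u\,|\nabla(u^2)|$ and in $|\delta\theta|\le\sqrt{n}\,u$, together with $|\nabla(u^2)|\le 2u|\nabla^2\theta|$. These give the $\epsilon$-uniform bound $C(k,n)(u^2+1)^{k-1}|\nabla^2\theta|\,|\mathrm{Ric}(\theta)|$, valid for $\epsilon\le 1$.
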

\begin{proof}
Integrating both sides of \eqref{diver2} in Lemma \ref{divergence2} and applying divergence theorem yields
$$\int_M \langle \nabla[\mathrm{Ric}(\theta)], u^{2k-2}\nabla \theta \rangle =\int_M u^{2k-2}\langle \mathrm{Ric}(\theta),\nabla^*\nabla \theta\rangle-(2k-2) \int_M u^{2k-3}\langle \mathrm{Ric}(\theta),\nabla_{\nabla u} \theta\rangle.$$
Substituting \eqref{mainKillForm1}, i.e., $\na^{\ast}\na\theta=\mathrm{Ric}(\theta)-\frac{n-2}{n}d\delta\theta$, the integral becomes
\begin{align*}
\int_M \left\langle\nabla\mathrm{Ric}(\theta),\nabla \theta\right\rangle u^{2k-2}
=&\int_M \left\langle\mathrm{Ric}(\theta),\mathrm{Ric}(\theta)\right\rangle u^{2k-2}-\int_M\frac{n-2}{n}\left\langle \mathrm{Ric}(\theta), d\delta \theta\right\rangle u^{2k-2}\\
&-(2k-2)\int_M \left\langle\mathrm{Ric}(\theta),\nabla_{\nabla u} \theta\right\rangle u^{2k-3}\\
=&\int_M \left|\mathrm{Ric}(\theta)\right|^2 u^{2k-2}+(2k-2)\cdot\frac{n-2}{n}\int_M \left\langle\mathrm{Ric}(\theta),du\right\rangle\delta \theta\cdot u^{2k-3}\\
&-\frac{n-2}{n}\int_M \delta \theta\cdot\delta\left[\mathrm{Ric}(\theta)\right] u^{2k-2}-(2k-2)\int_M \left\langle\mathrm{Ric}(\theta),\nabla_{\nabla u} \theta\right\rangle u^{2k-3}
\end{align*}
where we use the relation $\delta\left[u^{2k-2}\mathrm{Ric}(\theta)\right]= u^{2k-2}\delta\left[\mathrm{Ric}(\theta)\right]-(2k-2)u^{2k-3}\left\langle\mathrm{Ric}(\theta),du\right\rangle$.
\end{proof}
Based on these integral identities in Propositions \ref{diverintegral1} and \ref{diverintegral2}, we now prove the Lemma \ref{L7}.
\begin{proof}[\textbf{The proof of Lemma \ref{L7}}]
Multiplying both sides of \eqref{InequalityforGra} by the function $u^{2k-2}$ and then integrating yields 
\begin{align}
 &\frac{1}{2}\int_M\Delta_d(u^2)u^{2k-2}+\int_M\left|\nabla^2 \theta\right|^2u^{2k-2}+\frac{n-2}{n(n-1)}\int_M\delta \theta\cdot\delta\left[\mathrm{Ric}(\theta)\right]u^{2k-2}\nonumber \\
=&\int_M \left\langle\nabla\mathrm{Ric}(\theta),\nabla \theta\right\rangle u^{2k-2}+\int_M\left\langle \nabla^*\left[R(\cdot,\cdot)\theta\right],\nabla \theta\right\rangle u^{2k-2}-\int_M\left\langle\mathrm{Ric}(\nabla \theta),\nabla \theta\right\rangle u^{2k-2}.\label{apendx111}
\end{align} 
 Noting that
$$	\sum_{i,j=1}^n \left\langle \nabla_{\mathrm{Ric}(e_j)}\theta, \nabla_{e_j} \theta \right\rangle=\sum_{i,j=1}^{n}\mathrm{Ric}(e_i,e_j)\langle\nabla_{e_i}\theta,\nabla_{e_j}\theta\rangle\geq {\mathrm{Ric}}^- |\nabla\theta|^2,
$$
by \eqref{Rictheta}, we first obtain 
\begin{equation}\label{apendx222}
-\int_M\left\langle\mathrm{Ric}(\nabla \theta),\nabla \theta\right\rangle u^{2k-2}\leq\int_M \underline{\mathrm{Ric}}^{-}u^{2k}+\int_M |R| u^{2k}.
\end{equation}
Denote
\begin{align}
F(\theta)=&\int_M\left\langle \nabla^*\left[R(\cdot,\cdot)\theta\right],\nabla \theta\right\rangle u^{2k-2}+\int_M \underline{\mathrm{Ric}}^{-} u^{2k}+\int_M |R| u^{2k}+\int_M \left|\mathrm{Ric}(\theta)\right|^2 u^{2k-2}\nonumber\\
&+(2k-2)\cdot\frac{n-2}{n}\int_M \left\langle\mathrm{Ric}(\theta),du\right\rangle\delta \theta\cdot u^{2k-3}-(2k-2)\int_M \left\langle\mathrm{Ric}(\theta),\nabla_{\nabla u} \theta\right\rangle u^{2k-3}.\label{E7}
\end{align}
Simplifying \eqref{apendx111} by Proposition \ref{diverintegral2} and inequality \eqref{apendx222} yields
\begin{equation}\label{keyproofLem31}
\frac{1}{2}\int_M\Delta_d(u^2)u^{2k-2}+\int_M\left|\nabla^2 \theta\right|^2u^{2k-2}\leq-\frac{n-2}{n-1}\int_M\delta \theta\cdot\delta\left[\mathrm{Ric}(\theta)\right]u^{2k-2}+F(\theta).
\end{equation}
Note that
\begin{equation*}
	\frac{1}{2}\int_M\Delta_d(u^2)u^{2k-2}=(2k-2)\int_M|\nabla u|^2u^{2k-2}.
	\end{equation*}
Hence, the inequlity \eqref{keyproofLem31} can be rewritten as
$$
 (2k-2)\int_M|\nabla u|^2u^{2k-2}+\int_M\left|\nabla^2 \theta\right|^2u^{2k-2}\leq-\frac{n-2}{n-1}\int_M\delta \theta\cdot\delta\left[\mathrm{Ric}(\theta)\right]u^{2k-2}+F(\theta).
$$
With this, using the relation (cf. \cite[Lemma 2.3]{Semm})
$$|d\delta \theta| \le \sqrt{n} |\nabla^2 \theta|,\quad |\delta\theta|\leq \sqrt{n}|\nabla \theta|$$
and 
\begin{equation}\label{E8}
|ab|\leq 4n|a|^2 + \frac{|b|^2}{16n}
\end{equation}
for real constants $a,b$,  then we have for any $k>1$,
\begin{align*}
-\int_M \delta \theta\cdot\delta\left[\mathrm{Ric}(\theta)\right] \cdot u^{2(k-1)} =&-\int_M\left\langle \mathrm{Ric}(\theta), d\left[\delta\theta\cdot u^{2(k-1)}\right]\right\rangle \\
=&-\int_M\left\langle \mathrm{Ric}(\theta),   d\delta\theta\right\rangle u^{2(k-1)}-(2k-2)\int_M\left\langle \mathrm{Ric}(\theta),  du\right\rangle\delta\theta\cdot u^{2k-3}\\
\leq& \int_{M}|\mathrm{Ric}(\theta)| \sqrt{n}|\na^{2}\theta|u^{2(k-1)}+(2k-2)\int_{M}  |\mathrm{Ric}(\theta)|  \sqrt{n}|\delta\theta|u^{2(k-1)} \\
\leq &4n\int_M |\mathrm{Ric}(\theta)|^2 u^{2(k-1)}+\frac{1}{16}\int_M\left|\nabla^2\theta\right|^2 u^{2(k-1)}\\
&+8n(k-1) \int_M |\mathrm{Ric}(\theta)|^2 \cdot u^{2(k-1)} +\frac{ (k-1)}{8} \int_M |\nabla u|^2 u^{2(k-1)}\\
\leq &4n(2k-1)\int_M |\mathrm{Ric}(\theta)|^2 u^{2(k-1)} \\
&+\frac{1}{16}\left( (2k-2)\int_M|\nabla u|^2u^{2k-2}+\int_M\left|\nabla^2 \theta\right|^2u^{2k-2}\right) \\
\leq&-\frac{n-2}{16(n-1)}\int_M\delta \theta\cdot\delta\left[\mathrm{Ric}(\theta)\right]u^{2k-2}+\frac{1}{16}F(\theta)\\
&+4n(2k-1) \int_M |\mathrm{Ric}(\theta)|^2 \cdot u^{2(k-1)}.
\end{align*}
which gives
\begin{equation}\label{deltaRic}
-\frac{n-2}{n-1}\int_M \delta \theta\cdot\delta\left[\mathrm{Ric}(\theta)\right] \cdot u^{2(k-1)}\leq \frac{16(n-2)}{15n-14}\left(4n(2k-1)\int_M |\mathrm{Ric}(\theta)|^2 u^{2(k-1)}+\frac{1}{16}F(\theta)\right).
\end{equation}
Since $\frac{16(n-2)}{15n-14}\leq \frac{16}{15}$ for any $n\geq2$, substituting \eqref{deltaRic} into \eqref{keyproofLem31} and taking the absolute value, it follows that
\begin{equation*}
\frac{1}{2}\int_M\Delta_d(u^2)u^{2k-2}+\int_M\left|\nabla^2 \theta\right|^2u^{2k-2}\leq\frac{16}{15}\cdot4n(2k-1)\int_M |\mathrm{Ric}(\theta)|^2 u^{2(k-1)}+\frac{16}{15}|F(\theta)|.
\end{equation*}
The following lemma provides an estimate for $F(\theta)$; its proof is included below.
\begin{lemma}\label{lemforFtheta}
Under the same assumptions as above, for $k>1$, we have
\begin{align}
|F(\theta)|\leq&8k \int_M |R\theta|^2 \cdot u^{2(k-1)} +\int_M \underline{\mathrm{Ric}}^{-} u^{2k}+\int_M |R| u^{2k}\nonumber \\
&+10nk\int_M |\mathrm{Ric}(\theta)|^2 \cdot u^{2(k-1)}+\frac{(k-1)}{2} \int_M |\nabla u|^2 u^{2(k-1)}.\label{E9}
\end{align}
\end{lemma}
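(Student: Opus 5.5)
We bound the six terms of $F(\theta)$ in \eqref{E7} one at a time. The two terms $\int_M \underline{\mathrm{Ric}}^{-}u^{2k}$ and $\int_M|R|u^{2k}$ already appear on the right of \eqref{E9}, and so does $\int_M|\mathrm{Ric}(\theta)|^2u^{2k-2}$, since $1\le 10nk$. So only three terms need work: the curvature-divergence term, and the two terms carrying the factor $(2k-2)$.

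\textbf{Curvature term.} For $\int_M\langle\nabla^*[R(\cdot,\cdot)\theta],\nabla\theta\rangle u^{2k-2}$ we invoke Proposition \ref{diverintegral1}. This converts it into
$$-(2k-2)\int_M u^{2k-3}\langle R(\cdot,\nabla u)\theta,\nabla\theta\rangle+\tfrac12\int_M u^{2k-2}|R\theta|^2.$$
This avoids any derivative of curvature. Pointwise we use Cauchy--Schwarz:
$$|\langle R(\cdot,\nabla u)\theta,\nabla\theta\rangle|\le |R\theta|\,|\nabla u|\,u,$$
where the factor $u=|\nabla\theta|$ comes from summing over the frame index. The weights then combine to $u^{2k-2}$, giving $(2k-2)\int_M|R\theta|\,|\nabla u|\,u^{2k-2}$. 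Young's inequality in the form $ab\le 4a^2+b^2/16$ bounds this by
$$8(k-1)\int_M|R\theta|^2u^{2k-2}+\tfrac{k-1}{8}\int_M|\nabla u|^2u^{2k-2}.$$
Adding the $\tfrac12|R\theta|^2$ term gives at most $8k\int_M|R\theta|^2u^{2k-2}$, since $8(k-1)+\tfrac12\le 8k$.

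\textbf{Ricci terms.} For $(2k-2)\frac{n-2}{n}\int_M\langle\mathrm{Ric}(\theta),du\rangle\,\delta\theta\,u^{2k-3}$ we use $|\delta\theta|\le\sqrt n\,u$ (Semmelmann). This bounds it by $(2k-2)\sqrt n\int_M|\mathrm{Ric}(\theta)|\,|\nabla u|\,u^{2k-2}$, and Young's inequality with $4n$ and $1/(16n)$ gives
$$8n(k-1)\int_M|\mathrm{Ric}(\theta)|^2u^{2k-2}+\tfrac{k-1}{8}\int_M|\nabla u|^2u^{2k-2}.$$
Similarly, $|\nabla_{\nabla u}\theta|\le|\nabla u|\,u$, so the last term of $F$ is at most $(2k-2)\int_M|\mathrm{Ric}(\theta)|\,|\nabla u|\,u^{2k-2}$. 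We bound it the same way, with the $|\nabla u|^2$ coefficient again $\tfrac{k-1}{8}$.

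\textbf{Summing.} The $|\nabla u|^2$ coefficients total $\tfrac{3(k-1)}{8}\le\tfrac{k-1}{2}$. The Ricci coefficients total roughly $1+8n(k-1)+8(k-1)$, which is $\le 10nk$ for $n\ge2$ after a little arithmetic (use $8(k-1)\le 4n(k-1)$ and adjust the Young constants if needed). Taking absolute values and applying the triangle inequality gives \eqref{E9}.

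The only real obstacle is bookkeeping the Young constants so that the totals stay within $8k$, $10nk$ and $\tfrac{k-1}{2}$. If the first choice of constants overshoots, I would rebalance the splitting parameter between the $|\mathrm{Ric}(\theta)|^2$ and $|\nabla u|^2$ terms, since there is slack in the $|\nabla u|^2$ coefficient.
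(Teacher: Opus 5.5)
Your decomposition and estimates are the paper's: Proposition~\ref{diverintegral1} with $ab\le 4a^2+b^2/16$ for the curvature term, and $|\delta\theta|\le\sqrt n\,u$ with $ab\le 4na^2+b^2/(16n)$ for the $\delta\theta$ term. Both give exactly the paper's bounds.

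The one place you differ is the last term $-(2k-2)\int_M\langle\mathrm{Ric}(\theta),\nabla_{\nabla u}\theta\rangle u^{2k-3}$, and there your arithmetic is wrong as stated. With $4a^2+b^2/16$ the Ricci coefficients total $1+8(n+1)(k-1)$, which exceeds $10nk$ for large $k$ when $n=2,3$. Your suggested step $8(k-1)\le 4n(k-1)$ makes this worse: it gives $1+12n(k-1)$, which exceeds $10nk$ for every $n$ once $k\ge 6$. The Moser iteration uses arbitrarily large $k$, so the bound must hold for all $k>1$.

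You therefore need to carry out the rebalancing you mention. Spend the spare $\frac{k-1}{8}$ of the $|\nabla u|^2$ budget on this term by using $ab\le 2a^2+b^2/8$. This gives $4(k-1)\le 2n(k-1)$ for the Ricci part and $\frac{k-1}{4}$ for the gradient part. The totals are then $1+10n(k-1)\le 10nk$ and exactly $\frac{k-1}{2}$. This is precisely the paper's choice, so with this correction your proof coincides with it.
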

\begin{proof}
From the definition of $F(\theta)$ in (\ref{E7}), we estimate each term separately.\\
\textbf{Term 1}: $\int_M\left\langle \nabla^*\left[R(\cdot,\cdot)\theta\right],\nabla \theta\right\rangle u^{2k-2}$. Using Proposition \ref{diverintegral1} and the relation
\begin{equation*}
|ab|\leq 4|a|^2 + \frac{|b|^2}{16}
\end{equation*}
for real constants $a,b$,  we observe that  for any $k>1$,
	\begin{align}
	\left|\int_M \langle \nabla^* \left[R(\cdot,\cdot)\theta\right], \nabla \theta \rangle u^{2(k-1)}\right|
	=& \left|-(2k-2) \int_M u^{2k-3}\langle R(\cdot,\nabla u)\theta,\nabla \theta\rangle+\frac{1}{2}\int_M u^{2k-2}|R\theta|^2\right| \nonumber\\
	\leq &(2k-2) \int_M |R\theta| \cdot |\nabla u| \cdot u^{2(k-1)}+ \frac{1}{2}\int_M |R\theta|^2 \cdot u^{2(k-1)}\nonumber \\
	\leq &\int_M |R\theta|^2 \cdot u^{2(k-1)}\nonumber\\
	& + (2k-2) \left( 4\int_M |R\theta|^2 \cdot u^{2(k-1)} + \frac{1}{16} \int_M |\nabla u|^2 u^{2(k-1)} \right)\nonumber \\
	\leq& 8k \int_M |R\theta|^2 \cdot u^{2(k-1)} +\frac{ (k-1)}{8} \int_M |\nabla u|^2 u^{2(k-1)}.\label{append6111}
	\end{align}
\textbf{Term 2}: $-(2k-2)\int_M \left\langle\mathrm{Ric}(\theta),\nabla_{\nabla u} \theta\right\rangle u^{2k-3}$. Similarly, using the relation
\begin{equation*}
|ab|\leq 2|a|^2 + \frac{|b|^2}{8}
\end{equation*}
for real constants $a,b$, we deduce that  for any $k>1$,
	\begin{align}
	\left|-(2k-2)\int_M \left\langle\mathrm{Ric}(\theta),\nabla_{\nabla u} \theta\right\rangle u^{2k-3}\right|\leq& (2k-2)\int_M|\mathrm{Ric}(\theta)| |\nabla u|u^{2(k-1)}\nonumber\\
	\leq &4k \int_M |\mathrm{Ric}(\theta)|^2 \cdot u^{2(k-1)} +\frac{ (k-1)}{4} \int_M |\nabla u|^2 u^{2(k-1)}\nonumber\\
	\leq &2nk \int_M |\mathrm{Ric}(\theta)|^2 \cdot u^{2(k-1)} +\frac{ (k-1)}{4} \int_M |\nabla u|^2 u^{2(k-1)}.
	\label{append6333}
	\end{align}
\textbf{Term 3}: $-(2k-2)\cdot\frac{n-2}{n}\int_M \left\langle\mathrm{Ric}(\theta),du\right\rangle\delta \theta\cdot u^{2k-3}$. Using $|\delta\theta|\leq \sqrt{n}|\nabla \theta|$ and the Peter-Paul inequality (\ref{E8}), we have for any $k>1$,
	\begin{align}
	&\left|-(2k-2)\cdot\frac{n-2}{n}\int_M \left\langle\mathrm{Ric}(\theta),du\right\rangle\delta \theta\cdot u^{2k-3}\right|\nonumber\\
	\leq& (2k-2)\int_M \left|\left\langle\mathrm{Ric}(\theta),du\right\rangle\delta \theta\cdot u^{2k-3}\right|\nonumber\\
	\leq &8n(k-1) \int_M |\mathrm{Ric}(\theta)|^2 \cdot u^{2(k-1)} +\frac{ (k-1)}{8} \int_M |\nabla u|^2 u^{2(k-1)}.\label{append6222}
	\end{align}
Combining \eqref{append6111}, \eqref{append6222} and \eqref{append6333}, and then taking absolute values, yields (\ref{E8}). 
\end{proof}
Finally, substituting the estimate for $F(\theta)$ from (\ref{E9}) into (\ref{deltaRic}), we obtain exactly the integral inequality stated in Lemma \ref{L7}. This completes the proof of Lemma \ref{L7}. 
\end{proof}

\section{Technical Lemmas for Moser Iteration}
We first recall the following Lemma, which is used in the proof of Lemma \ref{L4}.
	\begin{theorem}\label{B1}
	Let $r>0,\nu>1$. Let $\{A(i)\}_{i\geq0}$ be a nonnegative and increasing sequence. If 
	$$A(i+1)\leq (1 + r\sqrt{\nu^{i}})^{\frac{1}{\nu^{i}}} A(i),\quad \forall i\geq0,$$
	then
	$$\lim_{i\to +\infty}A(i)\leq\exp\left(\frac{r\sqrt{\nu}}{\sqrt{\nu}-1}\right)A(0).$$
	\end{theorem}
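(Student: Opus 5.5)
The plan is to take logarithms and turn the multiplicative recursion into a bound on a telescoping sum. Since $A(i)\ge 0$ is increasing, if $A(0)=0$ then the hypothesis forces $A(i)=0$ for all $i$ by induction, and the claim is trivial. Otherwise every $A(i)>0$, so the hypothesis iterates to
\begin{equation*}
A(N)\le \prod_{i=0}^{N-1}\left(1+r\sqrt{\nu^{i}}\right)^{\frac{1}{\nu^{i}}}A(0),
\end{equation*}
equivalently
\begin{equation*}
\log A(N)-\log A(0)\le \sum_{i=0}^{N-1}\nu^{-i}\log\left(1+r\nu^{i/2}\right).
\end{equation*}
Monotonicity guarantees that $\lim_{i\to\infty}A(i)$ exists in $[0,+\infty]$, so it suffices to bound the partial sums uniformly in $N$ and then let $N\to\infty$.

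The key step is the elementary inequality $\log(1+x)\le x$ for $x\ge 0$. Applying it termwise gives
\begin{equation*}
\nu^{-i}\log\left(1+r\nu^{i/2}\right)\le r\,\nu^{-i/2}.
\end{equation*}
Since $\nu>1$, the right-hand side is a geometric series with ratio $\nu^{-1/2}<1$, so
\begin{equation*}
\sum_{i=0}^{\infty}r\,\nu^{-i/2}=\frac{r}{1-\nu^{-1/2}}=\frac{r\sqrt{\nu}}{\sqrt{\nu}-1}.
\end{equation*}
Exponentiating then yields $\lim_{i\to\infty}A(i)\le \exp\!\left(\frac{r\sqrt{\nu}}{\sqrt{\nu}-1}\right)A(0)$, which is exactly the stated constant. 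In particular the limit is finite.

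There is essentially no obstacle. The only points needing care are the degenerate case $A(0)=0$ (handled above) and the observation that the limit exists because the sequence is increasing. Without monotonicity one would state the result with $\limsup$, and the same bound would still hold.
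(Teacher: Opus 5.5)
Your proposal is correct and follows essentially the same argument as the paper. Both iterate the recursion, bound each factor using $\ln(1+x)\le x$, and sum the geometric series $\sum_{l\ge 0} r\nu^{-l/2}=\frac{r\sqrt{\nu}}{\sqrt{\nu}-1}$. Your separate treatment of the case $A(0)=0$ and your remark on why the limit exists are harmless extra care.
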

	\begin{proof}
		For any non-negaitive integer $i$, iterating we find that
	\begin{align*}
	A(i+1)\leq&(1 + r\sqrt{\nu^{i}})^{\frac{1}{\nu^{i}}} A(i)\leq (1 + r\sqrt{\nu^{i}})^{\frac{1}{\nu^{i}}}(1 + r\sqrt{\nu^{i-1}})^{\frac{1}{\nu^{i-1}}} A(i-1)\\
	\leq&\left(\prod_{l=0}^{i}(1 + r\sqrt{\nu^{l}})^{\frac{1}{\nu^{l}}}\right)A(0)=\left(\prod_{l=0}^{i}\exp\left(\frac{1}{\nu^{l}}\ln(1 + r\sqrt{\nu^{l}}\right)\right)A(0)\\
	\leq&\exp\left(\sum_{l=0}^i\frac{r}{\sqrt{\nu^l}}\right)A(0).
	\end{align*}
	Here we used the inequality $\ln(1+x)\leq x$ for $x>0$. Taking the limit as $i \to +\infty$, we have
	$$\lim_{i\to +\infty}A(i)\leq \exp\left(\sum_{l=0}^{+\infty}\frac{r}{\sqrt{\nu^l}}\right)A(0)=\exp\left(\frac{r\sqrt{\nu}}{\sqrt{\nu}-1}\right)A(0).$$
	This completes the proof.
	\end{proof}
	We next estimate the following infinite product.
	\begin{lemma}\label{L6}
		Let $r> 0$, $\gamma > 1$. Then
		\[
		P := \prod_{i=0}^\infty (1 + r\gamma^{i+1})^{\frac{1}{\gamma^{i+1}}}
		\leq \exp\left\{ \frac{2\sqrt{\gamma}}{\gamma-1} \right\} (1 + \sqrt{r})^{\frac{2}{\gamma-1}}.
		\]
	\end{lemma}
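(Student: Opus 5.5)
Taking logarithms, it suffices to show
\[
\log P=\sum_{i=1}^\infty \frac{\log(1+r\gamma^{i})}{\gamma^{i}}\leq \frac{2\sqrt{\gamma}}{\gamma-1}+\frac{2}{\gamma-1}\log(1+\sqrt r).
\]
The plan is to split each logarithm so that one part depends only on $r$ and the other is a convergent geometric-type series in $\gamma$. First I would use the elementary inequality $1+r\gamma^{i}\leq (1+\sqrt r)^2(1+\gamma^{i/2})^2$. To check it, expand the right side to $(1+2\sqrt r+r)(1+2\gamma^{i/2}+\gamma^i)$; this contains the terms $1$ and $r\gamma^i$, and every remaining term is nonnegative. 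The inequality then gives
\[
\log(1+r\gamma^{i})\leq 2\log(1+\sqrt r)+2\log(1+\gamma^{i/2}).
\]

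Next I would sum the $r$-part: $\sum_{i\ge1}2\gamma^{-i}\log(1+\sqrt r)=\frac{2}{\gamma-1}\log(1+\sqrt r)$, which is exactly the power $(1+\sqrt r)^{2/(\gamma-1)}$ in the statement. For the remaining part I would use $\log(1+x)\leq x$ with $x=\gamma^{i/2}$, which gives
\[
\sum_{i\ge1}\frac{2\gamma^{i/2}}{\gamma^{i}}=2\sum_{i\ge1}\gamma^{-i/2}=\frac{2}{\sqrt\gamma-1}.
\]

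The main obstacle is that $\frac{2}{\sqrt\gamma-1}$ is larger than the target $\frac{2\sqrt\gamma}{\gamma-1}=\frac{2\sqrt\gamma}{(\sqrt\gamma-1)(\sqrt\gamma+1)}$, so this crude estimate loses a factor and has to be sharpened. I would first try the splitting $1+r\gamma^i\le(1+\sqrt r)^2\max(1,\gamma^{i/2})^2$. This fails when $r$ is large, because $(1+\sqrt r)^2\gamma^i\geq r\gamma^i$ is fine but $1\le(1+\sqrt r)^2$ is also needed, and combining the two gives $1+r\gamma^i\le (1+\sqrt r)^2\gamma^i$ only when $\gamma^i\geq1$, which does hold. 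That yields $\log(1+r\gamma^i)\le 2\log(1+\sqrt r)+i\log\gamma$. The term $\sum_{i\geq 1} i\gamma^{-i}\log\gamma=\frac{\gamma\log\gamma}{(\gamma-1)^2}$ then has to be bounded by $\frac{2\sqrt\gamma}{\gamma-1}$, which is equivalent to $\log\gamma\le 2(\sqrt\gamma-1/\sqrt\gamma)$. With $s=\sqrt\gamma$ this reads $2\log s\le 2(s-1/s)$, i.e. $\log s\leq s-1/s$, and this holds for $s\ge1$ by comparing derivatives at $s=1$: $1/s\leq 1+1/s^2$. This second route gives exactly the stated constant, so it is the one I will write up.

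Finally I would exponentiate. The product converges because its log-series converges, and this gives the claimed bound for $P$.
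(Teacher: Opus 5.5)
Your second route is correct and proves the stated bound. One sentence should be cut from the write-up: the claim that the splitting ``fails when $r$ is large'' is spurious. The inequality $1+r\gamma^i\le(1+\sqrt r)^2\gamma^i$ holds for every $r>0$ once $\gamma^i\ge 1$, as you yourself go on to check.

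The paper peels off the same factor $(1+\sqrt r)^2$. It writes $\ln(1+rx)\le 2\ln(1+\sqrt{rx})$ and factors $1+\sqrt{rx}=(1+\sqrt r)\bigl(1+(\sqrt x-1)\tfrac{\sqrt r}{1+\sqrt r}\bigr)$. The two arguments differ only in how the $\gamma$-dependent remainder is handled.

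\emph{The paper's route.} It applies $\ln(1+y)\le y$ to the second factor and so keeps the ``$-1$'' that your first, crude attempt lost. The remainder is then $2(\gamma^{i/2}-1)$ per term. Summing gives $\sum_{i\ge1}2(\gamma^{-i/2}-\gamma^{-i})=\frac{2}{\sqrt\gamma-1}-\frac{2}{\gamma-1}=\frac{2\sqrt\gamma}{\gamma-1}$, which is exactly the stated constant with no further lemma.

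\emph{Your route.} You keep the sharper remainder $i\log\gamma=2\log\gamma^{i/2}$. You sum the arithmetico-geometric series in closed form and then need the auxiliary inequality $\log s\le s-1/s$.

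The two routes are directly linked. Since $\log y\le y-1$, we have $i\log\gamma\le 2(\gamma^{i/2}-1)$ term by term. Summing the paper's termwise bound therefore proves your comparison $\frac{\gamma\log\gamma}{(\gamma-1)^2}\le\frac{2\sqrt\gamma}{\gamma-1}$ in one line.

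Your route gives a genuinely better constant. Using $\log s\le\frac12(s-1/s)$ for $s\ge1$ yields $\frac{\gamma\log\gamma}{(\gamma-1)^2}\le\frac{\sqrt\gamma}{\gamma-1}$. That improvement does not matter for how the lemma is used in the Moser iteration. The paper's route is shorter and lands on the stated constant directly.
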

	
	\begin{proof}
		For any $x > 0$, we have
		\begin{align*}
		\ln(1 + rx) &\leq 2\ln(1 + \sqrt{rx}) \\
		&= 2\ln(1 + \sqrt{r}) + 2\ln\left( 1 + (\sqrt{x} - 1)\frac{\sqrt{r}}{1 + \sqrt{r}} \right) \\
		&\leq 2\ln(1 + \sqrt{r}) + 2(\sqrt{x} - 1).
		\end{align*}
		Hence,
		\begin{align*}
		\ln P &= \sum_{i=0}^\infty \frac{\ln(1 + r\gamma^{i+1})}{\gamma^{i+1}} \\
		&\leq \sum_{i=0}^\infty \frac{2\ln(1 + \sqrt{r})}{\gamma^{i+1}} + \sum_{i=0}^\infty \frac{2\gamma^{\frac{i+1}{2}} - 2}{\gamma^{i+1}} \\
		&\leq \frac{2\ln(1 + \sqrt{r})}{\gamma-1} + \frac{2}{\sqrt{\gamma} - 1} - \frac{2}{\gamma-1} \\
		&= \frac{2\ln(1 + \sqrt{r})}{\gamma-1} + \frac{2\sqrt{\gamma}}{\gamma-1}.
		\end{align*}
		Therefore, we complete the proof of  this lemma.
	\end{proof}
	Based on Lemma \ref{L6}, we prove the following technical lemma, which is used in Moser iteration procedure in Proposition \ref{P6}.
	\begin{theorem}\label{B2}
	Let $r,s>0,\gamma>1$. Let $\{A(i)\}_{i\geq0}$ be a nonnegative and increasing sequence. If 
	$$A(i+1)\leq (1 + r\gamma^{i+1})^{\frac{s}{\gamma^{i+1}}} A(i),\quad \forall i\geq0,$$
	then
	$$\lim_{i\to +\infty}A(i)\leq \exp\left\{ \frac{2s\sqrt{\gamma}}{\gamma-1} \right\} (1 + \sqrt{r})^{\frac{2s}{\gamma-1}}A(0).$$
	\end{theorem}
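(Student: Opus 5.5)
Iterating the hypothesis from $i$ down to $0$ reduces the statement to bounding an infinite product, and Lemma \ref{L6} supplies exactly that bound. Concretely, for every $i\geq 0$ I would first show
\[
A(i+1)\leq \left(\prod_{l=0}^{i}(1+r\gamma^{l+1})^{\frac{1}{\gamma^{l+1}}}\right)^{s}A(0).
\]
This follows by induction on $i$, multiplying the assumed inequalities $A(l+1)\leq (1+r\gamma^{l+1})^{s/\gamma^{l+1}}A(l)$ for $l=0,\dots,i$. All factors and all $A(l)$ are nonnegative, so the inequalities multiply without sign issues.

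Next I would pass to the limit. Each factor $(1+r\gamma^{l+1})^{1/\gamma^{l+1}}$ is at least $1$, so the partial products are increasing and bounded above by the full product $P$ of Lemma \ref{L6}. Lemma \ref{L6} gives $P<\infty$, so $A(i+1)\leq P^{s}A(0)$ holds uniformly in $i$. The sequence $A(i)$ is increasing and bounded, hence its limit exists, and
\[
\lim_{i\to\infty}A(i)\leq P^{s}A(0)\leq \left(\exp\left\{\frac{2\sqrt{\gamma}}{\gamma-1}\right\}(1+\sqrt{r})^{\frac{2}{\gamma-1}}\right)^{s}A(0).
\]
This is precisely the claimed bound, with exponents $\frac{2s\sqrt{\gamma}}{\gamma-1}$ and $\frac{2s}{\gamma-1}$.

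There is no real obstacle here. The only points needing care are that the $s$-th power passes through the product, which holds because all quantities are positive, and that the limit exists, which the monotonicity hypothesis guarantees. Even without monotonicity, the same argument would bound $\limsup_{i\to\infty} A(i)$.
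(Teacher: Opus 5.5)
Your proposal is correct and matches the paper's proof: iterate the hypothesis to bound $A(i+1)$ by the $s$-th power of a partial product (the paper writes it as $\prod_{l=1}^{i+1}(1+r\gamma^{l})^{1/\gamma^{l}}$), then let $i\to\infty$ and invoke Lemma \ref{L6}. Your remarks that the partial products increase to $P$, and that monotonicity guarantees the limit exists, make explicit what the paper leaves implicit.
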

	\begin{proof}
	For any non-negaitive integer $i$, iterating we find that
	\begin{align*}
	A(i+1)\leq& (1 + r\gamma^{i+1})^{\frac{s}{\gamma^{i+1}}} A(i)\leq (1 + r\gamma^{i+1})^{\frac{s}{\gamma^{i+1}}} (1 +r\gamma^{i})^{\frac{s}{\gamma^{i}}} A(i-1)\\
	\leq&\left(\prod_{l=1}^{i+1} (1 + r\gamma^{l})^{\frac{1}{\gamma^{l}}}\right)^sA(0).
	\end{align*}
	Taking the limit as $i \to +\infty$ and applying the index shift $l = k+1$, we can apply Lemma \ref{L6} to get
	\begin{align*}
	\lim_{i\to +\infty}A(i)\leq& \left(\prod_{l=1}^{+\infty} (1 + r\gamma^{l})^{\frac{1}{\gamma^{l}}}\right)^sA(0)\\
	\leq &\exp\left\{ \frac{2s\sqrt{\gamma}}{\gamma-1} \right\} (1 + \sqrt{r})^{\frac{2s}{\gamma-1}}A(0).
	\end{align*}
	This completes the proof.
	\end{proof}
 
\subsection*{Acknowledgements}
This work is supported by the National Natural Science Foundation of China Nos. 12271496 and the Youth Innovation Promotion Association CAS.
	
\subsection*{Data Availability}
This manuscript has no associated data.
\subsection*{Declarations}
\textbf{Conflict of interest} The author states that there is no conflict of interest.

\end{document}